\newcommand{\norm}[1]{\|#1\|}
\newcommand{\lnorm}[1]{\left\|#1\right\|}
\newcommand{\hnorm}[1]{\|#1\mathring{\|}}
\newcommand{\abs}[1]{\left|#1\right|}
\newcommand{\R}{\mathbb R}
\newcommand{\D}{\partial}
\newcommand{\Pdv}{\mathbb{P}}
\newcommand{\Id}{\mathbb{I}}
\newcommand{\M}{\mc{M}}
\newcommand{\eps}{\varepsilon}
\newcommand{\dv}{\mathrm{div}\,}
\newcommand{\nb}{\nabla}
\newcommand{\curl}{\mathrm{curl}\,}
\newcommand{\dist}{\mathrm{dist}\,}
\newcommand{\ls}{\leqslant\,}
\newcommand{\gs}{\geqslant\,}
\newcommand{\sgn}{\mathrm{sgn}}
\newcommand{\no}{\nonumber}
\newcommand{\bic}{\complement}
\newcommand{\mc}[1]{{\mathcal{#1}}}
\newcommand{\Lie}{\mathcal{L}}
\newcommand{\A}{\mathcal{A}}
\numberwithin{equation}{section}
\newtheorem{theorem}{Theorem}[section]
\newtheorem{lemma}[theorem]{Lemma}
\newtheorem{proposition}[theorem]{Proposition}
\theoremstyle{definition}
\newtheorem{definition}[theorem]{Definition}
\theoremstyle{remark}
\newtheorem{remark}[theorem]{Remark}
\begin{document}

\title{\Large\bf Well-Posedness for the Linearized Free Boundary Problem of Incompressible Ideal Magnetohydrodynamics Equations}

\author{\normalsize CHENGCHUN HAO\\[-3pt]
	\small\it Academy of Mathematics \& Systems Science\\[-4pt]
\small\it	and Hua Loo-Keng Key Laboratory of Mathematics\\[-4pt]
\small\it	Chinese Academy of Sciences\\[1mm]
\normalsize 	 TAO LUO\\[-3pt]
\small\it City University of Hong Kong}
\date{}
\maketitle

\begin{abstract}
	We study the well-posedness theory for the linearized free boundary problem of incompressible ideal magnetohydrodynamics equations in a bounded domain. We express the magnetic field in terms of the velocity field and the deformation tensors in the Lagrangian coordinates, and substitute the magnetic field into the momentum equation to get an equation of the velocity in which the initial magnetic field serves only as a parameter. Then, we linearize this equation with respect to the position vector field whose time derivative is the velocity, and obtain the local-in-time well-posedness of the solution by using energy estimates of the tangential derivatives and the curl with the help of Lie derivatives and the smooth-out approximation.
\end{abstract}
\tableofcontents

\pagestyle{myheadings}
\markboth{\small\rm C. HAO AND T. LUO\hfil}{\hfil\small\rm LWP OF LINEARIZED INCOMPRESSIBLE IDEAL MHD}

\section{Introduction}

This paper is concerned with the well-posedness of the linearized motion of the following incompressible ideal magnetohydrodynamics (MHD) with free boundary
\begin{align}
	&v_t+v\cdot\nb v+\nabla p=\mu H\cdot\nb H,  &\text{ in } \mathcal{D}, \label{mhd.1}\\
	&H_t+v\cdot \nb H=H\cdot \nb v,  &\text{ in } \mathcal{D}, \label{mhd.2}\\
	&\dv  v=0,  \quad \dv  H=0, &\text{ in } \mathcal{D}, \label{mhd.3}
\end{align}
where $v$ is the velocity field, $H$ is the magnetic field, $p$ is the total pressure including the fluid pressure and the magnetic pressure, and $\mu>0$ is the vacuum permeability, $\mathcal{D}=\cup_{0\ls t\ls T}\{t\}\times \Omega_t$, $\Omega_t\subset \R^n$ is the domain occupied by the fluid at time $t$. 

We also require boundary conditions on the free boundary $\D\mathcal{D}$:
\begin{align}
	 &H\cdot\mathcal{N}=0,\quad p=0, \quad\text{ on } \D\mathcal{D},\label{bddry}\\
	 &(\D_t+v^k\D_k)|_{\D\mathcal{D}}\in T(\D\mathcal{D}), \label{comoving}
\end{align}
where $\mathcal{N}$ is the exterior unit normal to $\Gamma_t:=\D\Omega_t$. The condition $p=0$ indicates that the total pressure vanishes outside the domain. Here the fluid considered is an incompressible ideal case.  The incompressibility condition  prevents the body from expanding, and the fact that the pressure is positive  prevents the body from breaking up in the interior.  From a physical point of view,   the total pressure can be thought of alternatively as being a small positive constant on the boundary instead of vanishing.  The challenge of the problem is that  the regularity of the boundary enters to the highest order.  Roughly speaking, the velocity determines the motion of  the boundary, and the boundary is the level set of the total pressure that determines the acceleration together with the magnetic tension. The condition $H\cdot\mathcal{N}=0$ comes from the assumption that the boundary $\Gamma_t$ is a perfect conductor, and should be understood as the constraints on the initial data since it will hold true for all $t\in [0,T]$ if it holds initially as showed in \cite{HLarma}. The condition \eqref{comoving} means that the boundary moves with the velocity $v$ of the fluid particles at the boundary.

Given a domain $\Omega\subset \R^n$ that is homeomorphic to the unit ball, and initial data $(v_0,H_0)$ satisfying the constrain \eqref{mhd.3}, we expect to find a set $\mathcal{D}\subset [0,T]\times \R^n$ and vector fields $(v,H)$ solving \eqref{mhd.1}-\eqref{comoving} with initial conditions
\begin{align}\label{idata}
\{x: (0,x)\in\mathcal{D}\}=\Omega;\quad  v=v_0, \; H=H_0, \text{ on } \{0\}\times \Omega.
\end{align}
Then, let $\Omega_t=\{x:\, (t,x)\in\mathcal{D}\}$. Motivated by the Taylor sign condition on the fluid pressure for the Euler equations, we raised an analogous condition based on the total pressure for ideal MHD in \cite{HLarma}:
\begin{align}
\nb_\mathcal{N} p\ls -c_0<0 \text{ on } \D\mathcal{D}, \label{nbNPcond}
\end{align}
where $\nb_{\mathcal{N}}=\mathcal{N}^i\D_{x^i}$. Here we have used the summation convection over repeated upper and lower indices. 
In \cite{HLarma}, we have proved a priori estimates in standard Sobolev spaces  for the free boundary problem of incompressible ideal MHD system \eqref{mhd.1}-\eqref{idata} under the condition \eqref{nbNPcond}. We also showed in \cite{HLip} that the above free boundary problem \eqref{mhd.1}-\eqref{idata} under consideration would be ill-posed at least for the case $n=2$ if the condition \eqref{nbNPcond} was violated. Thus, it will be much reasonable and necessary to require this condition \eqref{nbNPcond} in the studies of well-posedness of the considering free boundary problem of incompressible ideal MHD equations.

However, the a priori estimates in \cite{HLarma} used all the symmetries of the nonlinear equations and so only holds for perturbations of the equations that preserved all the symmetries. Thus, we can not use those a priori estimates for solutions of linearized equations that do not preserve the symmetries. Of course, the results in \cite{HLarma} are  important for the raise of the meaningful and reasonable condition \eqref{nbNPcond} for the well-posedness. 

In this paper, we prove the existence of solutions in Sobolev spaces for linearized equations using a new type of estimate by using some ideas in \cite{L1}. Existence for the linearized equations or some modification are crucially  important to any existence proof for the nonlinear problem by putting the nonlinear problem in some iteration schemes. In the most general way, it is to linearize the equations with respect to both the velocity field and the magnetic field. However, it is almost impossible to get the well-posedness for the linearized system of this type, where many operators can not be controlled and the relations between the velocity field and the magnetic field is also destroyed. In order to preserve these important relations, we  seek another way to linearize the equations. Since the magnetic field can be expressed in terms of the velocity field and the deformation tensors in the Lagrangian coordinates, we can first solve the equation \eqref{mhd.2} and substitute the magnetic field into the equation \eqref{mhd.1} to get an equation of the velocity in which the initial magnetic field serves only as a parameter. Then, we linearize this equation with respect to the position vector field whose time derivative is the velocity in the Lagrangian coordinates. As in \cite{L1}, we project the linearized equation onto an equation in the interior using the orthogonal projection onto divergence-free vector fields in the $L^2$ inner product, which removes a difficult term, the differential of linearization of the pressure, and reduces a higher-order term, the linearization of the free boundary, to an unbounded symmetric operator on divergence-free vector fields. Thus, the linearized equation turns to an evolution equation in the interior for this so-called normal operator that is positive due to the condition \eqref{nbNPcond} and leads to energy bounds. Because the operator is time dependent and nonelliptic, we can not obtain the existence of regular solutions by standard energy methods or semigroup methods. As practiced in \cite{L1}, to use Lie derivatives with respect to divergence-free vector fields tangential at the boundary is an effective way. The estimates of all derivatives can be got from those of tangential derivatives, the divergence and the curl. We replace the normal operator by a sequence of bounded operators converging to it that are still symmetric and positive and have uniformly estimates in order to get the existence of solutions.

Fluids free boundary problems arising from  physical, engineering and medical models are both important in applications and challenging in PDE theory. Examples include water waves, evolution of boundaries of stars, vortex sheets, multi-phase flow,  reacting flow, shock waves, biomedical modeling such as tumor growth, cell deformation and etc.  The most fundamental and simplest setting is for incompressible fluids for which the local well-posedness in Sobolev spaces for inviscid irrotational flow was obtained first in \cite{wu1, wu2} for 2D and 3D, respectively.  Substantial progresses for the cases without the irrotational assumption, finite depth water waves, lower regularity, uniform estimates with respect to surface tension and etc have been made in \cite{AZ, ABZ, AM, BG, CHS1, CL, Coutand,Ebin,LD2,LN, PN, SZ, SWZ, zhang} and etc.  For more references, one may refer to the excellent survey  \cite{LD2}. For compressible inviscid  flow, the local-in-time well-posedness of smooth solutions was established for liquids  in  \cite{L2,35} (see also \cite{CHS} for the zero surface tension limits),  the study of the effects of heat-conductivity to fluid free surface can be found in \cite{LZ}.

However, only few results are available  for free boundary problems of  MHD equations.  Indeed,  magnetic fields are essential in many important physical situations (\cite{6, 26, 27}), for example,  solar flares  in astrophysics due to the coupling between magnetic and thermomechanical degrees of freedom for which magnetic
reconnection is thought to be the mechanism responsible for the conversion of
magnetic energy into heat and fluid motion (\cite{6}). Moreover,  interface problems in MHD are crucial to the theoretical and practical study of producing energy by fusion. In the study of the ideal MHD free boundary problems,  a priori estimates were derived in \cite{HLarma} with a bounded initial domain homeomorphic to a ball, provided that the size of the magnetic field to be invariant on the free boundary. A priori estimates for the low regularity solution of this problem were given in \cite{LZ19} for the bounded domain with small volume.  Ill-posedness was showed in \cite{HLip} for the two-dimensional problem if the condition \eqref{nbNPcond} was violated. A local existence result was established in \cite{GW} for which the detailed proof is given for an initial flat domain of the form $\mathbb{T}^2 \times (0, 1)$, where $\mathbb{T}^2$ is a two-dimensional period box in $x_1$ and  $x_2$.   The aim of the present paper is to study the ideal MHD free surface problem with a free surface being a closed curved surface with large curvature by the geometric approach motivated by \cite{CL}, \cite{L1} and \cite{L2}. 
For the special case where the magnetic field is zero on the free boundary and in vacuum,  the local existence and uniqueness of the free boundary problem of incompressible viscous-diffusive MHD flow in three-dimensional space with infinite and finite depth setting was proved in \cite{Lee} and \cite{Lee2} where also a local unique solution was obtained  for the  free boundary MHD without kinetic viscosity and magnetic diffusivity via zero kinetic viscosity-magnetic diffusivity limit. The convergence rates of inviscid limits for the free boundary problems of the three-dimensional incompressible MHD with or without surface tension was studied in \cite{CD19}, where the magnetic field is constant on the surface and outside of the unbounded domain. 
For the incompressible viscous MHD equations, a free boundary problem  in a simply connected domain of $\R^3$ was studied by a linearization technique and the construction of a sequence of successive approximations in \cite{PS10} with an irrotational condition for magnetic fields in a part of the domain. 
The plasma-vacuum system was investigated  in \cite{Hao17} where the a priori estimates were derived in a bounded domain. The well-posedness of the linearized plasma-vacuum interface problem in incompressible ideal MHD was studied in \cite{MTT14} in an unbounded plasma domain. For other related results of  MHD equations with free boundaries or interfaces, one may refer to \cite{chenwang, ST, Trakhinin, WangXin17, WangYu13}.

The rest of the paper is organized as follows. In Section \ref{sec.Lag}, we introduce the Lagrangian coordinates and reformulate the free boundary problem to a fixed boundary problem, and then linearize the equation.  We project the linearized equation onto the divergence-free vector fields in Section \ref{sec.proj}, and derive the lowest-order energy estimates in Section \ref{sec.lowestenergy}. In Section \ref{sec.homodata}, we change the linearized problem into the case of homogeneous initial data and an inhomogeneous term that vanishes to any order as time tends to zero. Next, we derive the a priori estimates of the linearized equation with homogeneous initial data in Section \ref{sec.apriori} including those of tangential derivatives and the curl. Then, we study a smoothed-out equation according to the normal operator and prove the existence of weak solution of it in Section \ref{sec.weak} and the existence of smooth solutions for the linearized equation in Section \ref{sec.exist}. We turn to the energy estimates of the original linearized equation with inhomogeneous initial data and an inhomogeneous term  in Section \ref{sec.inhomo}, and give the main result and its proof in Section \ref{sec.mainresult}. Finally, some preliminaries about the Lie derivative are given in the appendix.

\section{Lagrangian coordinates and the linearization of equations}\label{sec.Lag}
\subsection{Lagrangian reformulation}
In this section, we introduce the Lagrangian coordinates and reformulate the free boundary problem to a fixed boundary problem. 
Lagrangian coordinates $x=x(t,y)=f_t(y)$ are given by
\begin{align}\label{trajectory}
\frac{dx}{dt}=v(t,x(t,y)), \quad x(0,y)=f_0(y), \quad y\in \Omega.
\end{align}
Then $f_t: \Omega\to \Omega_t$ is a volume-preserving diffeomorphism because of $\dv v=0$, and the free boundary becomes fixed in the new $y$-coordinates. For simplicity, we take $f_0$ is the identity operator, that is, $x(0,y)=y$ and $\Omega$ is just the unit ball. For convenience, the letters $a,b,c,d,e$, and $f$ will refer to quantities in the Lagrangian frame, whereas the letters $i,j,k,l,m$, and $n$ will refer to ones in the Eulerian frame, e.g., $\D_a=\D/\D y^a$ and $\D_i=\D/\D x^i$.

Denote
\begin{align}
D_t=&\D_t+v^k\D_k,\quad 
 \D_k=\frac{\D}{\D x^k}=\frac{\D y^a}{\D x^k}\frac{\D}{\D y^a}.\label{Di}
\end{align}
Then, we get
\begin{align}
D_t\frac{\D x^i}{\D y^a}=\frac{\D D_t x^i}{\D y^a}=\frac{\D v^i}{\D y^a}=\frac{\D x^k}{\D y^a}\frac{\D v^i}{\D x^k},
\end{align}
and
\begin{align}\label{Dtyx}
D_t\frac{\D y^a}{\D x^i}=&D_t(\delta^a_b\frac{\D y^b}{\D x^i})=\frac{\D x^j}{\D y^b}\frac{\D y^a}{\D x^j}D_t\frac{\D y^b}{\D x^i}\\
=&\frac{\D y^a}{\D x^j}D_t(\frac{\D x^j}{\D y^b}\frac{\D y^b}{\D x^i})-\frac{\D y^a}{\D x^j}\frac{\D y^b}{\D x^i}D_t\frac{\D x^j}{\D y^b}\no\\
=&-\frac{\D y^a}{\D x^j}\frac{\D y^b}{\D x^i}\frac{\D x^k}{\D y^b}\frac{\D v^j}{\D x^k}\no\\
=&-\frac{\D y^a}{\D x^j}\frac{\D v^j}{\D x^i}.\no
\end{align}
From \eqref{mhd.2} and \eqref{Dtyx}, we have
\begin{align*}
D_t\left(H^i\frac{\D y^a}{\D x^i}\right)=&D_tH^i\frac{\D y^a}{\D x^i}+H^iD_t\frac{\D y^a}{\D x^i}
=H^j\D_jv^i\frac{\D y^a}{\D x^i}-H^i\D_iv^k\frac{\D y^a}{\D x^k}=0,
\end{align*}
which yields 
\begin{align*}
H^i(t,x(t,y))\frac{\D y^a}{\D x^i}= H^i(0,x(0,y))\left.\frac{\D y^a}{\D x^i}\right|_{t=0}=\bar{H}_0^i(y)\delta_i^a=\bar{H}_0^a(y),
\end{align*}
and
\begin{align}\label{Hasinitial}
H^j(t,x(t,y))=\bar{H}_0^a(y)\frac{\D x^j(t,y)}{\D y^a},
\end{align}
where $\bar{H}_0^a(y)=H^a_0(x(0,y))$.
Then, 
\begin{align*}
H^k\D_kH^i=\bar{H}_0^a\frac{\D x^k}{\D y^a}\frac{\D y^c}{\D x^k}\D_c(\bar{H}_0^b\frac{\D x^i}{\D y^b})=\bar{H}_0^a\D_a(\bar{H}_0^b\D_b x^i),
\end{align*}
For convenience, denote the differential operator 
$$B:=B^a(y)\frac{\D}{\D y^a}, \text{ with } B^a(y):=\sqrt{\mu}\bar{H}_0^a(y),$$ 
then \eqref{mhd.1}-\eqref{comoving} can be written as
\begin{align}\label{mhd1}
	\left\{\begin{aligned}
	&D_t^2x^i+\D_i P =  B^2 x^i, &&\text{ in } [0,T]\times\Omega,\\
	&\kappa:=\det\left(\frac{\D x}{\D y}\right)=1, &&\text{ in }  [0,T]\times\Omega,\\
	&P=0, &&\text{ on }  \Gamma,
	\end{aligned}\right.
\end{align}
where $P=P(t,y)=p(t,x(t,y))$, $\D_i$ is thought of as the differential operator in $y$ given in \eqref{Di} and $D_t$ is the time derivative.  The initial conditions read
\begin{align}
x|_{t=0}=y,\quad D_tx|_{t=0}=v_0,
\end{align}
satisfying the constraint $\dv v_0=0$. 
Taking the divergence of \eqref{mhd1} gives the Laplacian of $P$:
\begin{align}\label{RT}
\Delta P=-(\D_iD_t x^k)(\D_kD_t x^i)+ \D_i ( B^2 x^i).
\end{align}
The condition \eqref{nbNPcond} turns to be
\begin{align}
\nb_N P\ls -c_0<0, \text{ on } \Gamma,
\end{align}
where $N$ is the exterior unit normal to $\Gamma_t$ parametrized by $x(t,y)$.

\subsection{Linearization}
Now, we derive the linearized equations of \eqref{mhd1}. We assume that $(x(t,y)$, $P(t,y))$ is a given smooth solution of \eqref{mhd1} satisfying \eqref{RT} for $t\in[0,T]$.

Let $\delta$ be a variation with respect to some parameter $r$ in the Lagrangian coordinates:
\begin{align}
\delta=\left.\frac{\D}{\D r}\right|_{(t,y)=\mathrm{const}}.
\end{align}
We think of $x(t,y)$ and  $P(t,y)$ as depending on 
$r$ and differentiating with respect to $r$, say, $\bar{x}(t,y,r)$ and $\bar{P}(t,y,r)$ respectively. Namely, $(\bar{x},\bar{P})|_{r=0}=(x,P) $.
Differentiating \eqref{Di} and using the formula for the derivative of 
the inverse of a matrix, $\delta M^{-1}=-M^{-1}(\delta M)M^{-1}$, we get the commutator
\begin{align}\label{del.Di}
[\delta,\D_i]=-(\D_i \delta x^k)\D_k.
\end{align}
Let 
\begin{align}
(\delta x,\delta P)=\left.\left(\frac{\D \bar{x}}{\D r},\frac{\D \bar{P}}{\D r}\right)\right|_{r=0},
\end{align}
which satisfies $\dv \delta x=0$ and $\delta P|_{\Gamma}=0$.

From \eqref{mhd1} and \eqref{del.Di}, we get  by noticing $[D_t,\delta]=0$ and $[\delta,B]=0$
\begin{align}\label{1}
D_t^2\delta x^i=&-\delta \D_i P+  B^2 \delta x^i\\
=&(\D_i\delta x^k)\D_kP-\D_i\delta P+ B^2 \delta x^i.\no
\end{align}
From \eqref{mhd1} again, we have
\begin{align}
\D_iP=-D_t^2x^i+  B^2 x^i=-D_t v^i+  B^2 x^i,
\end{align}
and then
\begin{align}\label{2}
(\D_i\delta x^k)\D_kP=&\D_i(\delta x^k\D_kP)-\delta x^k\D_i\D_kP\\
=&\D_i(\delta x^k\D_kP)-\delta x^k\D_k(-D_t v^i+  B^2 x^i)\no\\
=&\D_i(\delta x^k\D_kP)+\delta x^k(\D_kD_t v^i- \D_k( B^2 x^i)).\no
\end{align}
It follows from \eqref{1} and \eqref{2} that
\begin{align}\label{3}
D_t^2\delta x^i
&+\D_i\delta P-\D_i(\delta x^k\D_kP)-\delta x^k(\D_kD_t v^i- \D_k( B^2 x^i))-  B^2 \delta x^i=0.
\end{align}

Now, we introduce new variables. Let 
\begin{align}
W^a=&\delta x^i \frac{\D y^a}{\D x^i}, \quad \delta x^i=W^b\frac{\D x^i}{\D y^b}, \quad q=\delta P. 
\end{align}
And recall
$$\D_i=\frac{\D y^a}{\D x^i}\D_a, \quad \D_a=\frac{\D x^i}{\D y^a}\D_i.$$

Let 
\begin{align}
g_{ab}=\delta_{ij}\frac{\D x^i}{\D y^a}\frac{\D x^j}{\D y^b}
\end{align}
be the metric $\delta_{ij}$ expressed in the Lagrangian coordinates. Let $g^{ab}$ be the inverse of $g_{ab}$, 
\begin{align}\label{gomega}
\dot{g}_{ab}=D_t g_{ab}=\frac{\D x^i}{\D y^a}\frac{\D x^k}{\D y^b}(\D_k v_i+\D_i v_k),  \text{ and } \omega_{ab}=\frac{\D x^i}{\D y^a}\frac{\D x^k}{\D y^b}(\D_iv_k-\D_k v_i)
\end{align}
be the time derivatives of the metric and the vorticity in the Lagrangian coordinates, respectively. It follows that
\begin{align}\label{Dv}
\frac{\D x^i}{\D y^a}\frac{\D x^k}{\D y^b}\D_k v_i=\frac{1}{2}(\dot{g}_{ab}-\omega_{ab}).
\end{align}

Multiplying \eqref{3} by $\frac{\D x^i}{\D y^a}$ and summing over $i$, we obtain
\begin{align}\label{4}
\delta_{il}\frac{\D x^l}{\D y^a}D_t^2\delta x^i
&-\D_a(W^c\D_cP)-W^b\delta_{il}\frac{\D x^l}{\D y^a}\D_bD_t v^i+\D_a q\no\\
&+ \delta_{il}\D_a x^lW^d\D_d( B^2 x^i) - \delta_{il}\D_a x^lB^2( W^c\D_c x^i)=0.
\end{align}

The first term in \eqref{4} can be written as
\begin{align*}
\delta_{il}\frac{\D x^l}{\D y^a}D_t^2\delta x^i
=&\delta_{il}\frac{\D x^l}{\D y^a}D_t^2(W^b\frac{\D x^i}{\D y^b})\\
=&\delta_{il}\frac{\D x^l}{\D y^a}D_t(D_tW^b\frac{\D x^i}{\D y^b}+W^bD_t\frac{\D x^i}{\D y^b})\no\\
=&g_{ab}D_t^2 W^b+2\delta_{il}\frac{\D x^l}{\D y^a}D_tW^bD_t\frac{\D x^i}{\D y^b}+\delta_{il}\frac{\D x^l}{\D y^a}W^bD_t^2\frac{\D x^i}{\D y^b}\no\\
=&g_{ab}D_t^2 W^b+2\frac{\D x^l}{\D y^a}\frac{\D x^k}{\D y^b}\D_kv_lD_tW^b+\delta_{il}\frac{\D x^l}{\D y^a}W^b\D_b D_tv^i\no\\
=&g_{ab}D_t^2 W^b+(\dot{g}_{ab}-\omega_{ab})D_tW^b+\frac{\D x^i}{\D y^a}W^b\D_b D_tv_i.
\end{align*}
It follows from  \eqref{4} that 
\begin{align}\label{lineqform}
g_{ab}D_t^2 W^b&+(\dot{g}_{ab}-\omega_{ab})D_tW^b-\D_a(W^c\D_cP)+\D_a q\no\\
&+ \delta_{il}\D_a x^lW^d\D_d( B^2 x^i) - \delta_{il}\D_a x^lB^2( W^c\D_c x^i)=0,
\end{align}
which yields by acting $g^{da}$
\begin{align}\label{5}
D_t^2 W^d&+g^{da}(\dot{g}_{ab}-\omega_{ab})D_tW^b-g^{da}\D_a(W^c\D_cP)+g^{da}\D_a q\no\\
&+ g^{da}\delta_{il}\D_a x^l[W^c\D_c( B^2 x^i) - B^2( W^c\D_c x^i)]=0.
\end{align}

From \eqref{4}, we see that the energies will include $\norm{BW}^2$. But it is very complicated due to $\dv(BW)\neq 0$. Indeed, we can regard $B$ as a tangential derivative since $B=B^a\D_a$ is independent of time and $\D_a B^a=0$. Thus, we can use the Lie derivative corresponding to $B$ given by
\begin{align}\label{Lie.B}
\Lie_BW^a=BW^a-\D_bB^a W^b,
\end{align}
which is divergence-free due to $\dv\Lie_B W=\D_a(B^b\D_bW^a-\D_bB^aW^b)=0$ if $\dv W=0$. We also have
\begin{align}\label{Lie.BDx}
\Lie_B \D_c x^i=B\D_c x^i+\D_c B^d\D_d x^i.
\end{align}
For more details and properties of Lie derivatives, one can see Appendix \ref{App.Lie}.

From \eqref{Lie.B}, it follows that
\begin{align*}
\Lie_B^2W^a=&\Lie_B (BW^a-(\D_cB^a)W^c)\\
=&B(BW^a-(\D_cB^a)W^c)-(\D_cB^a)\Lie_BW^c\\
=&B^2W^a-B^d\D_d\D_cB^a W^c-(\D_cB^a)BW^c-(\D_cB^a)\Lie_BW^c\\
=&B^2W^a-B^c\D_c\D_dB^a W^d-(\D_cB^a)(\D_dB^cW^d)-2(\D_cB^a)\Lie_BW^c\\
=&B^2W^a-2(\D_cB^a)\Lie_BW^c-W^d\D_d(BB^a),
\end{align*}
and then
\begin{align*}
B^2(\D_cx^iW^c)=&\D_cx^iB^2W^c+2B\D_cx^iBW^c+B^2\D_cx^iW^c\\
=&\D_cx^i(\Lie_B^2W^c+2(\D_dB^c)\Lie_BW^d+W^d\D_d(BB^c))\\
&+2B\D_cx^i(\Lie_BW^c+(\D_dB^c)W^d)+B^2\D_cx^iW^c\\
=&\D_cx^i\Lie_B^2W^c+2(\D_dBx^i)\Lie_BW^d+\D_cx^iW^d\D_d(BB^c)\\
&+2B\D_cx^i\D_dB^cW^d+B^2\D_dx^iW^d\\
=&\D_cx^i\Lie_B^2W^c+2(\D_cBx^i)\Lie_BW^c+W^c\D_c(B^2x^i)\\
=&\D_cx^i\Lie_B^2W^c+2((\D_cB^b)\D_bx^i+B\D_cx^i)\Lie_BW^c+W^c\D_c(B^2x^i).
\end{align*}
Hence, we get by \eqref{Lie.BDx}
\begin{align*}
&g^{da}\delta_{il}\D_a x^l[W^c\D_c( B^2 x^i) - B^2( W^c\D_c x^i)]\\
=&-g^{da}\delta_{il}\D_a x^l[\D_cx^i\Lie_B^2W^c+2(\D_cB^b\D_bx^i+B\D_cx^i)\Lie_BW^c]\\
=&-\Lie_B^2W^d-2\D_cB^d\Lie_BW^c-2g^{da}\delta_{il}\D_a x^lB\D_cx^i\Lie_BW^c\\
=&-\Lie_B^2W^d-2g^{da}\delta_{il}\D_a x^l\Lie_B\D_cx^i\Lie_BW^c.
\end{align*}

We introduce some new notations.  Denote
\begin{align}
\dot{W}^a(t,y):=D_tW^a(t,y),\quad \ddot{W}^a:=D_t^2 W^a.
\end{align}
Since $q=\delta P$, one has $q|_{\Gamma}=0$.
Thus, from \eqref{5}  and \eqref{mhd.3},  we have the following system
\begin{equation}\label{m}
	\left\{\begin{aligned}
&\ddot{W}^d-\Lie_B^2W^d+g^{da}\D_a q-g^{da}\D_a(W^c\D_cP)+g^{da}(\dot{g}_{ab}-\omega_{ab})\dot{W}^b\\
&\qquad-2g^{da}\delta_{il}\D_a x^l\Lie_B\D_cx^i\Lie_BW^c =0,\\
&\dv W=\kappa^{-1}\D_a(\kappa W^a)=0, \\
&q|_{\Gamma}=0,\\
&W|_{t=0}=W_0,\; \dot{W}|_{t=0}=W_1,
	\end{aligned}\right.
\end{equation}
where $\dv W_0=\dv W_1=0$. 

We can express \eqref{m} in one equation since $q=\delta P$ is determined as a functional of $W$ and $\dot{W}$. Thus, we derive an elliptic equation for $q$.

\subsection{The equation of $\Delta q$}

In order to get the equation of $\Delta q$, we have to derive $\dv \ddot{W}$ first. Denote 
\begin{align*}
u^a:=\frac{\D y^a}{\D x^i}v^i, \text{ and } u_a=g_{ab}u^b.
\end{align*}
From  $\dv W=0$, it follows that $\dv\ddot{W}=0$. 
Thus, taking the divergence of \eqref{m}, we have,
\begin{align}\label{Deltaq}
	\left\{\begin{aligned}
&\Delta q=\D_d\big(g^{da}\D_a(W^c\D_cP)-g^{da}(\dot{g}_{ab}-\omega_{ab})\dot{W}^b+2g^{da}\delta_{il}\D_a x^l\Lie_B\D_cx^i\Lie_BW^c\big),\\
&q|_{\Gamma}=0,
	\end{aligned}\right.
\end{align}
since $\dv \Lie_B^2W=0$.

We separate $q$ into four parts:
$$q=\sum_{i=1}^4 q_i,$$
where $q_i$'s satisfy the following Dirichlet problems of Poisson equations:
\begin{align*}
	\left\{\begin{aligned}
&\Delta q_1=\Delta(W^c\D_cP), &q_1|_\Gamma =0,\\
&\Delta q_2=-\D_d(g^{da}\dot{g}_{ab}\dot{W}^b),  &q_2|_\Gamma =0,\\
&\Delta q_3=\D_d(g^{da}\omega_{ab}\dot{W}^b), &q_3|_\Gamma =0,\\
&\Delta q_4=2\D_d(g^{da}\delta_{il}\D_a x^l\Lie_B\D_cx^i\Lie_BW^c), &q_4|_\Gamma =0.
	\end{aligned}\right.
\end{align*}

Then, we can write \eqref{m} as
\begin{align}\label{m1}
	L_1W:=\ddot{W}-\Lie_B^2W+\A W+\dot{\mc{G}}\dot{W}-\mc{C}\dot{W}+\mc{X}\Lie_BW=0,
\end{align}
where
	\begin{align}
	&\A W^d:=-g^{da}\D_a(\D_cP W^c-q_1),\label{oper.A}\\	
	&\dot{\mc{G}}\dot{W}^d:=g^{da}(\dot{g}_{ab}\dot{W}^b+\D_a q_2),\label{oper.dotG}\\
	&\mc{C}\dot{W}^d:=g^{da}(\omega_{ab}\dot{W}^b-\D_a q_3),\label{oper.C}\\
	&\mc{X}\Lie_BW^d:= -2g^{da}\delta_{il}\D_a x^l\Lie_B\D_cx^i\Lie_BW^c+g^{da}\D_a q_4.\label{oper.X}
	\end{align}

\section{The projection onto divergence-free vector field} \label{sec.proj}

In this section, we recall some definitions on the projection onto divergence-free vector field, one can see \cite{L1} for details.

Let $\Pdv$ be the orthogonal projection onto divergence-free vector fields in the inner product
\begin{align*}
\langle W,U\rangle =\int_\Omega g_{ab}W^a U^b dy.
\end{align*}
Then,
\begin{align*}
\Pdv U^a=&U^a-g^{ab}\D_b q,\\
\Delta q=&\D_a(g^{ab}\D_b q)=\dv U=\D_aU^a, \quad q|_{\Gamma}=0,
\end{align*}
because of $g_{ab}g^{bc}=\delta_a^c$ and 
\begin{align*}
\langle W, (\Id-\Pdv)U\rangle =&\int_\Omega g_{ab}W^a g^{bc}\D_c q dy\\
=&\int_{\Gamma} W^aN_a qdS-\int_\Omega q\dv W  dy=0, \text{ if } \dv W=0,\no
\end{align*}
where $N_a$ is the exterior unit conormal and $dS$ is the surface measure. In addition, for the function vanishing on the boundary, the projection of its gradient vanishes:
\begin{align*}
\Pdv (g^{ab}\D_b f)=0, \text{ if } f|_{\Gamma}=0.
\end{align*}

Denote $\norm{W}:=\langle W,W\rangle^{1/2}$. It is clear that 
\begin{align*}
\norm{\Pdv U}\ls \norm{U}, \quad \norm{(\Id-\Pdv)U}\ls \norm{U}.
\end{align*}
The projection is continuous on the Sobolev spaces $H^r(\Omega)$ if the metric is sufficiently regular: 
\begin{align*}
\norm{\Pdv U}_{H^r(\Omega)}\ls C_r \norm{U}_{H^r(\Omega)}.
\end{align*}
Furthermore, if the metric also depends smoothly on time $t$, then
\begin{align}\label{PdvHr}
\sum_{j=0}^k \norm{D_t^j \Pdv U}_{H^r(\Omega)}\ls C_{r,k}\sum_{j=0}^k \norm{D_t^j U}_{H^r(\Omega)}.
\end{align}

For functions $f$ vanishing on the boundary, we define operators on divergence-free vector fields ($\D_a W^a=0$)
\begin{align}\label{Afdef}
\A_f W^a=\Pdv\left(-g^{ab}\D_b(W^c\D_c f)\right).
\end{align}
$\A_f$ is symmetric, i.e., $\langle U,\A_fW\rangle=\langle \A_fU,W\rangle$.

Since $P$ is the total pressure, the normal operator $\A $ in \eqref{oper.A} is 
\begin{align*}
\A =\A_P\gs 0, \quad \langle W,\A W\rangle \gs 0 \text{ if } \nb_N P|_{\Gamma}\ls 0,
\end{align*} 
which is true in view of the condition \eqref{nbNPcond}. In fact,
\begin{align}\label{WAW}
\langle W,\A W\rangle =&\int_\Omega g_{ab}W^b \A W^a dy\\
=&-\int_\Omega g_{ab}W^b g^{ad}\D_d(W^c\D_cP -q_1) dy \no\\
=&-\int_\Omega \D_d(W^d(W^c\D_cP -q_1)) dy\no\\
=&-\int_{\Gamma} N_dW^dW^c\D_cP dS+\int_{\Gamma} N_dW^dq_1 dS\no\\
=&-\int_{\Gamma} N_dW^dg^{ac}\D_cPW_a dS\no\\
=&-\int_{\Gamma} N_dW^d\gamma^{ac}\D_cPW_a dS-\int_{\Gamma} N_dW^dN^aN^c\D_cPW_a dS\no\\
=&-\int_{\Gamma} |N\cdot W|^2\nb_NP dS\gs 0,\no
\end{align}
due to $P=0$ and $q_1=0$ on the boundary $\Gamma$.

By the definition in \eqref{Afdef}, we have
\begin{align*}
\A_{fP}W^a=&-g^{ab}\D_b(W^c\D_c(fP))+g^{ab}\D_b q,\\
\Delta q=&\Delta (W^c\D_c(fP)), \quad q|_{\Gamma}=0.
\end{align*}
Then, for the divergence-free vector field $U$, it follows that
\begin{align}
\langle U,\A_{fP}W\rangle=&\int_\Omega g_{ab}U^a\A_{fP}W^b dy\no\\
=&-\int_{\Omega}(U^d\D_d(W^c\D_c(fP))+U^d\D_d q)dy\no\\
=&-\int_{\Gamma} U_NW^c\D_c(fP)dS\no\\
=&-\int_{\Gamma} U_NW_N\nb_N(fP)dS\label{UAW}\\
=&-\int_{\Gamma} U_NW_Nf\nb_NPdS,\no
\end{align}
since $\nb_N(fP)=f\nb_NP$ and $fP=0$ on the boundary, where $U_N=N_aU^a=N\cdot U$.
From the Cauchy-Schwarz inequality and the identity \eqref{WAW}, it follows that
\begin{align}\label{est.A}
|\langle U,\A_{fP}W\rangle|\ls &\norm{f}_{L^\infty(\Gamma)}\left(\int_\Gamma |U_N|^2(-\nb_NP)dS\right)^{1/2}\left(\int_\Gamma |W_N|^2(-\nb_NP)dS\right)^{1/2}\no\\
=&\norm{f}_{L^\infty(\Gamma)}\langle U,\A U\rangle^{1/2}\langle W,\A W\rangle^{1/2}.
\end{align}
In addition, since $P$ vanishes on the boundary, so does $\dot{P}=D_tP$, and then we can define 
\begin{align*}
\dot{\A }=\A_{\dot{P}},\quad \dot{\A }W^a=-g^{ab}\D_b(W^c\D_c\dot{P}-q),\quad \Delta q=\Delta(W^c\D_c\dot{P}),\; q|_\Gamma=0,
\end{align*}
which satisfies by \eqref{UAW}
\begin{align}\label{dotA}
|\langle W,\dot{\A }W\rangle |=\abs{-\int_{\Gamma} |W_N|^2\nb_N\dot{P}dS}\ls \lnorm{\frac{\nb_N\dot{P}}{\nb_N P}}_{L^\infty(\Gamma)}\langle W,\A W\rangle.
\end{align}
In fact, $\dot{\A }$ is the time derivatives of the operator $\A $, considered as an operator with values in the $1$-forms.

For $2$-forms $\alpha$, we define bounded projected multiplication operators, as in \cite{L1}, given by
\begin{align}\label{Malpha}
\M_\alpha W^a=\Pdv (g^{ab}\alpha_{bc}W^c), \quad \norm{\M_\alpha W}\ls \norm{\alpha}_{L^\infty(\Omega)}\norm{W}.
\end{align}
In particular, the operators in \eqref{oper.dotG} and \eqref{oper.C} are bounded, projected multiplication operators:
\begin{align}\label{op}
\mc{G}=\M_g, \quad \mc{C}=\M_\omega, \quad \dot{\mc{G}}=\M_{\dot{g}},
\end{align}
for the metric $g$, the vorticity $\omega$, and the time derivative of the metric $\dot{g}$.

\section{The lowest-order energy estimates} \label{sec.lowestenergy}

Now, we derive the energy estimates for the linearized equations
\begin{align}\label{lineareq}
	L_1W=\ddot{W}-\Lie_B^2W+\A W+\dot{\mc{G}}\dot{W}-\mc{C}\dot{W}+\mc{X}\Lie_BW =F,
\end{align}
where $F$ is divergence-free.

We first compute the inner product of \eqref{lineareq} with $\dot{W}$ and $W$. Since $$D_t(g_{ab}\dot{W}^a\dot{W}^b)=\dot{g}_{ab}\dot{W}^a\dot{W}^b+2g_{ab}\dot{W}^a\ddot{W}^b,$$
 we get
\begin{align}\label{DtWt}
\langle \ddot{W}, \dot{W}\rangle=\frac{1}{2}\frac{d}{dt}\langle \dot{W}, \dot{W}\rangle-\frac{1}{2}\langle \dot{W}, \dot{\mc{G}}\dot{W}\rangle,
\end{align}
where $\dot{G}$ is given by \eqref{op}. From the symmetry of $\A $, it follows 
\begin{align*}
 \langle \A W,\dot{W}\rangle=\frac{1}{2}\frac{d}{dt}\langle \A W,W\rangle-\frac{1}{2}\langle \dot{\A }W,W\rangle,
\end{align*}
where $\dot{\A }W^a=\A_{\dot{P}}W^a$ is defined by \eqref{Afdef} with $f=\dot{P}=D_tP$. In addition,
\begin{align*}
\frac{1}{2}\frac{d}{dt}\langle W, W\rangle=\langle W,\dot{W}\rangle.
\end{align*}
Thus,
\begin{align}\label{DtAW}
\frac{1}{2}\frac{d}{dt}\langle (\A +I)W,W\rangle=\langle (\A +I)W,\dot{W}\rangle+\frac{1}{2}\langle \dot{\A }W,W\rangle.
\end{align}

We also have
\begin{align}\label{DtBW}
-\langle \Lie_B^2W,\dot{W}\rangle=&-\int_{\Omega} g_{ab}\Lie_B^2W^a\dot{W}^b dy\\
=&-\int_{\Omega}\Lie_B (g_{ab}\Lie_BW^a\dot{W}^b) dy+\int_{\Omega} g_{ab}\Lie_BW^a\Lie_B\dot{W}^b dy\no\\
&+\int_{\Omega} (\Lie_Bg_{ab})\Lie_BW^a\dot{W}^b dy\no\\
=&\frac{1}{2}\frac{d}{dt}\int_{\Omega}|\Lie_BW|^2dy-\frac{1}{2}\int_{\Omega} \dot{g}_{ab}\Lie_BW^a\Lie_BW^bdy\no\\
&+\int_{\Omega} \delta_{il}(\D_b x^l\Lie_B\D_ax^i+\Lie_B\D_b x^l\D_ax^i)\Lie_BW^a\dot{W}^b dy.\no
\end{align}

Since $q_4=0$ on $\Gamma$, it yields
\begin{align*}
\langle \mc{X}\Lie_BW,\dot{W}\rangle
=&-2\int_{\Omega} \delta_{il}\D_b x^l\Lie_B\D_ax^i\Lie_BW^a\dot{W}^b dy.
\end{align*}

Hence, we can define the energy as
\begin{align}\label{energy}
E_0^2(t)=E(t)=\langle \dot{W}, \dot{W}\rangle+\langle (\A +I)W,W\rangle+\langle \Lie_BW,\Lie_BW\rangle.
\end{align}
Then, we have the following energy estimates.

\begin{proposition}\label{prop.E0}
	Let
	\begin{align*}
	n_0(t)=&\frac{1}{2}\left(1+\lnorm{\frac{\nb_N\dot{P}}{\nb_N P}}_{L^\infty(\Gamma)}+2\norm{\dot{g}}_{L^\infty(\Omega)}+2\norm{\D x}_{L^\infty(\Omega)}\norm{\Lie_B\D x}_{L^\infty(\Omega)}\right).
	\end{align*}
It holds	
	\begin{align}\label{E0}
	E_0(t)\ls e^{\int_0^tn_0(\tau)d\tau}\left(E_0(0)+\int_0^t\norm{F(s)}e^{-\int_0^sn_0(\tau)d\tau}ds\right).
	\end{align}
\end{proposition}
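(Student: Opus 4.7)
The plan is a Gronwall-type energy argument. I would test the linearized equation $L_1W=F$ against $\dot W$ in the inner product $\langle\cdot,\cdot\rangle$ and add $\langle W,\dot W\rangle$ to both sides so as to produce the inner product against $(\A+I)W$ required by the $I$-term of $E$. Using the three identities \eqref{DtWt}, \eqref{DtAW} and \eqref{DtBW} already derived in the excerpt, I would rearrange $\langle\ddot W,\dot W\rangle$, $\langle(\A+I)W,\dot W\rangle$, and $-\langle\Lie_B^2W,\dot W\rangle$ into $\tfrac12\tfrac{d}{dt}E$ plus remainders coming from $\dot{\mc G}$, $\dot\A$, the Lie-$B$ derivative of the metric, and the forcing. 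The target differential inequality is $\tfrac12\tfrac{d}{dt}E\ls n_0(t)E+\norm{F}\sqrt E$; dividing by $\sqrt E$ and applying Gronwall with integrating factor $e^{-\int_0^t n_0}$ then yields \eqref{E0}.

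\textbf{Key cancellations.} First, $\langle\mc C\dot W,\dot W\rangle=0$: the $\omega$-piece vanishes by antisymmetry of $\omega_{ab}$ in \eqref{gomega}, and the projection correction $-g^{da}\D_a q_3$ in \eqref{oper.C} produces only the boundary integral $\int_\Gamma q_3N_a\dot W^a\,dS$, which is zero because $q_3|_\Gamma=0$ and $\dv\dot W=0$. For the magnetic coupling, the gradient part $g^{da}\D_a q_4$ of \eqref{oper.X} vanishes for the same reason, and combining what remains of $\langle\mc X\Lie_BW,\dot W\rangle$ with the Lie-$B$ remainder of \eqref{DtBW} yields the net expression
\begin{equation*}
\int_\Omega \delta_{il}\bigl(\D_bx^l\,\Lie_B\D_ax^i-\Lie_B\D_bx^l\,\D_ax^i\bigr)\Lie_BW^a\dot W^b\,dy,
\end{equation*}
which is controlled pointwise by $2\norm{\D x}_{L^\infty(\Omega)}\norm{\Lie_B\D x}_{L^\infty(\Omega)}|\Lie_BW||\dot W|$. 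Finally, since $\M_{\dot g}$ is self-adjoint, $\tfrac12\langle\dot W,\dot{\mc G}\dot W\rangle-\langle\dot{\mc G}\dot W,\dot W\rangle=-\tfrac12\langle\dot{\mc G}\dot W,\dot W\rangle$.

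\textbf{Bounding the remainders.} The projected-multiplication bound \eqref{Malpha} with $\alpha=\dot g$ gives $|\langle\dot{\mc G}\dot W,\dot W\rangle|+\bigl|\int_\Omega\dot g_{ab}\Lie_BW^a\Lie_BW^b\,dy\bigr|\ls \norm{\dot g}_{L^\infty(\Omega)}(\norm{\dot W}^2+\norm{\Lie_BW}^2)\ls \norm{\dot g}_{L^\infty(\Omega)}E$. The boundary formula \eqref{dotA} gives $|\langle\dot\A W,W\rangle|\ls \lnorm{\nb_N\dot P/\nb_N P}_{L^\infty(\Gamma)}\langle W,\A W\rangle\ls \lnorm{\nb_N\dot P/\nb_N P}_{L^\infty(\Gamma)}E$. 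Cauchy-Schwarz together with $ab\ls(a^2+b^2)/2$ bounds the Lie-metric cross-term by $\norm{\D x}_{L^\infty(\Omega)}\norm{\Lie_B\D x}_{L^\infty(\Omega)}E$. Since $\A\gs 0$, one has $\norm{W}^2\ls\langle(\A+I)W,W\rangle\ls E$, so $\langle F+W,\dot W\rangle\ls \norm{F}\sqrt E+\tfrac12 E$. Summing the coefficients recovers precisely $n_0(t)$, yielding $\tfrac12\tfrac{d}{dt}E\ls n_0(t)E+\norm{F}\sqrt E$.

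\textbf{Gronwall and the main difficulty.} From $\tfrac{d}{dt}E\ls 2n_0(t)E+2\norm{F}\sqrt E$ I obtain $\tfrac{d}{dt}E_0\ls n_0(t)E_0+\norm{F}$ on $\{E_0>0\}$ (and continuity handles the trivial set), and the integrating factor $e^{-\int_0^t n_0}$ produces \eqref{E0}. The main obstacle is the second paragraph: one must track the signs carefully so that the apparently top-order pieces involving the second derivatives of $x$ carried by $\Lie_B\D x$ actually cancel between the Lie-$B$ integration by parts and the $\mc X\Lie_BW$ term, leaving only the symmetric-antisymmetric combination bounded by $\norm{\D x}_{L^\infty}\norm{\Lie_B\D x}_{L^\infty}$; and one must verify that every boundary integral generated by the various integrations by parts vanishes, which rests on $P|_\Gamma=q_1|_\Gamma=q_3|_\Gamma=q_4|_\Gamma=0$ together with the divergence-free conditions $\dv W=\dv\dot W=0$ inherited from the initial data.
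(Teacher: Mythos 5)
Your proposal is correct and follows essentially the same route as the paper's proof: test the equation against $\dot W$, use the pre-derived identities \eqref{DtWt}, \eqref{DtAW}, \eqref{DtBW} together with the vanishing of $\langle\mc C\dot W,\dot W\rangle$ to produce $\tfrac12\tfrac{d}{dt}E$ plus remainders, bound the remainders using \eqref{Malpha} and \eqref{dotA} and the antisymmetric $\D x$--$\Lie_B\D x$ combination, and close with Gronwall after dividing by $\sqrt E$. You supply slightly more detail than the paper does on why the gradient corrections $\D q_3,\D q_4$ drop out against divergence-free $\dot W$ and how the $\mc X$ term cancels the symmetric part of the $\Lie_B g$ contribution, but these are exactly the observations the paper's terse proof relies on implicitly.
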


\begin{proof}
Due to the antisymmetry of $\omega$, we have $\langle \mc{C}\dot{W},\dot{W}\rangle=0$. Then, we get
\begin{align*}
\frac{1}{2}\dot{E}(t)=&\langle -\frac{1}{2}\dot{\mc{G}}\dot{W}+F, \dot{W}\rangle+\langle W,\dot{W}\rangle+\frac{1}{2}\langle \dot{\A }W,W\rangle+\frac{1}{2}\langle\dot{\mc{G}}\Lie_BW,\Lie_BW\rangle\\
&+\int_{\Omega} \delta_{il}(\Lie_B\D_b x^l\D_ax^i-\D_b x^l\Lie_B\D_ax^i)\Lie_BW^a\dot{W}^b dy.
\end{align*}
Thus,  we obtain
\begin{align}\label{dotE0}
|\dot{E_0}|\ls &n_0(t)E_0+\norm{F},
\end{align}
which yields the desired estimates.
\end{proof}

\section{Turning initial data into an inhomogeneous divergence-free term} \label{sec.homodata}

In this section, we want to change the initial value problem \eqref{lineareq} and \eqref{m}, i.e.,
\begin{subequations}\label{IVP}
	\begin{align}
	&L_1W=\ddot{W}-\Lie_B^2W+\A W+\dot{\mc{G}}\dot{W}-\mc{C}\dot{W}+\mc{X}\Lie_BW=F,\label{eq.IVP}\\
	&W|_{t=0}=W_0,\; \dot{W}|_{t=0}=W_1,\label{data}
	\end{align}
\end{subequations}
into the case of homogeneous initial data and an inhomogeneous term $F$ that vanishes to any order as $t\to 0$. As in \cite{L1}, we can achieve it by subtracting off a power series solution in $t$ to \eqref{IVP}:
\begin{align}\label{W0r}
W_{0r}^a(t,y)=\sum_{s=0}^{r+2}\frac{t^s}{s!}W_s^a(y).
\end{align}
It is clear that $W_{0r}$ is divergence-free if so does $W_s$. Here $W_0$ and $W_1$ are the initial data given in \eqref{data}, $W_2$ is obtained from \eqref{IVP} at $t=0$:
\begin{align*}
W_2=\ddot{W}(0)=F(0)+\Lie_B^2W_0-\A (0)W_0-\dot{\mc{G}}(0)W_1+\mc{C}(0)W_1-\mc{X}(0)\Lie_BW_0.
\end{align*}
The higher-order terms can be obtained by  differentiating the equation with respect to time first and then taking the value at $t=0$. Indeed, by doing so, we can obtain an expression
\begin{align*}
D_t^{k+2}W=M_k(W,D_tW,\cdots,D_t^{k+1}W)+D_t^kF,
\end{align*}
from which we inductively define
\begin{align*}
W_{k+2}=M_k(W_0,W_1,\cdots,W_{k+1})|_{t=0}+D_t^kF|_{t=0},
\end{align*}
where $M_k$ is some linear operator of order at most $1$ and that is all we need to derive. Next, we calculate the explicit form of $M_k$ as a simple model case, since we will do similar derivations later on  for other operators.

It is convenient to differentiate the corresponding operator with values in $1$-forms, so we denote
\begin{align}\label{1formeq}
\underline{L}_1W_a:=&g_{ab}L_1W^b\\
=&g_{ab}\ddot{W}^b-g_{ab}\Lie_B^2W^b+\D_a q-\D_a(\D_cPW^c)+(\dot{g}_{ab}-\omega_{ab})\dot{W}^b\no\\
&-2\delta_{il}\D_a x^l\Lie_B\D_cx^i\Lie_BW^c=g_{ab}F^b,\no
\end{align}
where $q$ is chosen such that the last terms are divergence-free, and afterwards project the result to the divergence-free vector fields. Denote
\begin{align*}
q^s=D_t^s q,\; P^s=D_t^sP,\; g_{ab}^s=D_t^s g_{ab},\; \omega_{ab}^s=D_t^s \omega_{ab},\; F_s=D_t^s F.
\end{align*}
Applying the differential operator $D_t^r$ to \eqref{1formeq} and restricting $t$ to $0$, we get
\begin{align*}
&\sum_{s=0}^r\bic_r^s\left(g_{ab}^{r-s}W_{s+2}^b-g_{ab}^{r-s}\Lie_B^2W_{s}^b-\D_a(\D_cP^{r-s}W^c_s)\right)+\D_a q^r\no\\
&+\sum_{s=0}^r \bic_r^s\left(g_{ab}^{r-s+1}-\omega_{ab}^{r-s}\right)W_{s+1}^b\no\\
&-2\sum_{s=0}^r\bic_r^s\sum_{s_1=0}^{r-s}\bic_{r-s}^{s_1}\delta_{il}\D_a (D_t^{r-s-s_1}x^l)\Lie_B\D_c(  D_t^{s_1} x^i)\Lie_BW^c_s=\sum_{s=0}^r\bic_r^sg_{ab}^{r-s}F^b_s.
\end{align*}
Then, we need to project all terms onto divergence-free vector fields. Let
\begin{align*}
	\left\{\begin{aligned}
	&\A_sW^d:=\Pdv(-g^{da}\D_a(\D_cP^s W^c)),\\	&\mc{G}_sW^d:=\Pdv(g^{da}{g}_{ab}^sW^b),\\
	&\mc{C}_sW^d:=\Pdv(g^{da}\omega_{ab}^sW^b),\\
	&\mc{X}_s\Lie_BW^d:= -2 \Pdv\left(g^{da}\sum_{s_1=0}^s\bic_s^{s_1}\delta_{il}\D_a (D_t^{s-s_1}x^l)\Lie_B\D_c (D_t^{s_1}x^i)\Lie_B W^c\right).
	\end{aligned}\right.
\end{align*}
We have
\begin{align*}
W_{r+2}=&-\sum_{s=0}^{r-1}\bic_r^s\mc{G}_{r-s}W_{s+2}+\sum_{s=0}^r \bic_r^s\left(\mc{G}_{r-s}\Lie_B^2W_s-\A_{r-s}W_s-\mc{G}_{r-s+1}W_{s+1}\right)\no\\
&+\sum_{s=0}^r\bic_r^s\left(\mc{C}_{r-s}W_{s+1}-\mc{X}_{r-s}\Lie_BW_s+\mc{G}_{r-s}F_s\right),
\end{align*}
which inductively defines $W_{r+2}$ from $W_0,W_1, \cdots, W_{r+1}$. 

By the definition of $W_{0r}$ in \eqref{W0r}, it is obvious that 
\begin{align*}
D_t^s(L_1W_{0r}-F)|_{t=0}=0\; \text{ for } s\ls r,\quad W_{0r}|_{t=0}=W_0,\; \dot{W}_{0r}|_{t=0}=W_1.
\end{align*}
Thus, we reduces \eqref{IVP} to the desired case of vanishing initial data and an inhomogeneous term that vanishes to any order $r$ as $t\to 0$ by replacing $W$ by $W-W_{0r}$ and $F$ by $F-L_1W_{0r}$.

If the initial data are smooth, as similar as in \cite{L1}, we can also construct a smooth approximate solution $\tilde{W}$ that satisfies the equation to all orders as $t\to 0$. We can realize it by multiplying the $k$-th term in \eqref{W0r} by a smooth cutoff function $\chi(t/\eps_k)$  and summing up the infinite series where $\chi(s)=1$ for $|s|\ls \frac{1}{2}$ and $\chi(s)=0$ for $|s|\gs 1$.  If we take $(\norm{\tilde{W}_k}_k+1)\eps_k\ls \frac{1}{2}$, then the sequence $\eps_k>0$ can be chosen so small that the series converges in $C^m([0,T],H^m)$ for any $m$.

\section{A priori estimates of the linearized equation with homogeneous initial data} \label{sec.apriori}

\subsection{The estimates of the one more order derivatives for the linearized equation}
We take the time derivative to \eqref{1formeq}
to get 
\begin{align*}
&g_{ab}\dddot{W}^b-g_{ab}\Lie_B^2\dot{W}^b-\D_a(\dot{W}^c\D_cP)-\omega_{ab}\ddot{W}^b+\D_a \dot{q}\no\\
=&-2\dot{g}_{ab}\ddot{W}^b+\dot{g}_{ab}\Lie_B^2W^b+\D_a(W^c\D_c\dot{P})-(\ddot{g}_{ab}-\dot{\omega}_{ab})\dot{W}^b\no\\
&+2D_t(\delta_{il}\D_a x^l\Lie_B\D_cx^i)\Lie_BW^c+2\delta_{il}\D_a x^l\Lie_B\D_cx^i\Lie_B\dot{W}^c+\dot{g}_{ab}F^b+g_{ab}\dot{F}^b.
\end{align*}

Similar to \eqref{DtWt} and \eqref{DtAW} it holds
\begin{align*}
\langle \dddot{W}, \ddot{W}\rangle=\frac{1}{2}\frac{d}{dt}\langle \ddot{W}, \ddot{W}\rangle-\frac{1}{2}\langle \ddot{W}, \dot{\mc{G}}\ddot{W}\rangle,
\end{align*}
and
\begin{align*}
\frac{1}{2}\frac{d}{dt}\langle \A \dot{W},\dot{W}\rangle=\langle \A \dot{W},\ddot{W}\rangle+\frac{1}{2}\langle \dot{\A }\dot{W},\dot{W}\rangle.
\end{align*}
In view of \eqref{DtBW}, we have
\begin{align*}
-\langle \Lie_B^2\dot{W},\ddot{W}\rangle=&\frac{1}{2}\frac{d}{dt}\int_{\Omega}|\Lie_B\dot{W}|^2dy-\frac{1}{2}\int_{\Omega} \dot{g}_{ab}\Lie_B\dot{W}^a\Lie_B\dot{W}^bdy\no\\
&+\int_{\Omega} (\Lie_Bg_{ab})\Lie_B\dot{W}^a\ddot{W}^b dy.
\end{align*}

 Let
\begin{align}
E_{D_t}=E(D_tW)=\langle \ddot{W}, \ddot{W}\rangle +\langle \dot{W},\A \dot{W}\rangle+\langle \Lie_B\dot{W},\Lie_B\dot{W}\rangle.
\end{align}
Then, similarly, from  the antisymmetry of $\dot{\omega}$, we have 
\begin{subequations}\label{ED}
	\begin{align}
	\dot{E}_{D_t}
	=&2\langle \dot{F}+\dot{\mc{G}}F, \ddot{W}\rangle-2\langle\ddot{\mc{G}}\dot{W}, \ddot{W}\rangle+2\langle\dot{\mc{C}}\dot{W}, \ddot{W}\rangle-3\langle\dot{\mc{G}}\ddot{W}, \ddot{W}\rangle+\langle \dot{W},\dot{\A }\dot{W}\rangle\label{ED.1}\\
	&+\langle \dot{\mc{G}}\Lie_B\dot{W},\Lie_B\dot{W}\rangle+4\langle D_t(\delta_{il}\D_a x^l\Lie_B\D_cx^i)\Lie_BW^c, \ddot{W}^a\rangle \label{ED.2}\\
	&+2\langle\delta_{il}(\D_a x^l\Lie_B\D_cx^i-\Lie_B\D_a x^l\D_cx^i)\Lie_B\dot{W}^c, \ddot{W}^a\rangle\label{ED.3}\\
	&-2\langle\dot{\A }W, \ddot{W}\rangle.\label{ED.4}
	\end{align}
\end{subequations}
Thus,  we get
\begin{align*}
|\eqref{ED.3}|\ls 2\norm{\D x}_{L^\infty(\Omega)}\norm{\Lie_B\D x }_{L^\infty(\Omega)}E_{D_t}.
\end{align*}

By Cauchy-Schwartz' inequalities, we get
\begin{align*}
|\eqref{ED.1}|\ls& 2(\norm{\dot{F}}+\norm{\dot{g}}_{L^\infty(\Omega)}\norm{F})E_{D_t}^{1/2}+2(\norm{\ddot{g}}_{L^\infty(\Omega)}+\norm{\dot{\omega}}_{L^\infty(\Omega)})E_{D_t}^{1/2}E_0\\
&+\left(3\norm{\dot{g}}_{L^\infty(\Omega)}+\lnorm{\frac{\nb_N \dot{P}}{\nb_NP}}_{L^\infty(\Omega)}\right)E_{D_t},
\end{align*}
and
\begin{align*}
|\eqref{ED.2}|\ls& \norm{\dot{g}}_{L^\infty(\Omega)}E_{D_t}+4\norm{D_t(\delta_{il}\D x^l\Lie_B\D x^i)}_{L^\infty(\Omega)}E_0E_{D_t}^{1/2}.
\end{align*}

Now, it remainders to deal with the term $\langle\dot{\A }W, \ddot{W}\rangle$ in \eqref{ED.4}. From \eqref{est.A}, it follows that
\begin{align}\label{est.At}
|\langle\dot{\A }W, \ddot{W}\rangle| \ls \lnorm{\frac{\nb_N \dot{P}}{\nb_N P}}_{L^\infty(\Gamma)}\langle W, \A W\rangle^{1/2}\langle \ddot{W},\A \ddot{W} \rangle^{1/2}.
\end{align} 
But this does not imply that the norm of $\dot{\A }$ is bounded by the norm of $\A $ because $\langle \ddot{W},\A \ddot{W} \rangle$ is one more order derivative than the considering energies.
However, we have
\begin{align*}
\langle\dot{\A }W, \ddot{W}\rangle=\frac{d}{dt}\langle\dot{\A }W, \dot{W}\rangle-\langle\ddot{\A }W, \dot{W}\rangle-\langle\dot{\A }\dot{W}, \dot{W}\rangle,
\end{align*}
in which the last two terms can be bounded by $E_{D_t}$ and $E_0$. Thus, we have to deal with this term in an indirect way, by including them in the energies and using \eqref{est.At}.
Let
\begin{align*}
D_{D_t}=2\langle\dot{\A }W,\dot{W}\rangle,
\end{align*}
then we get
\begin{align*}
\dot{D}_{D_t}=&2\langle\ddot{\A }W,\dot{W}\rangle+2\langle\dot{\A }\dot{W},\dot{W}\rangle+2\langle\dot{\A }W,\ddot{W}\rangle,
\end{align*}
and 
\begin{align*}
|\dot{D}_{D_t}-2\langle\dot{\A }W,\ddot{W}\rangle|\ls & 2\lnorm{\frac{\nb_N\ddot{P}}{\nb_NP}}_{L^\infty(\Omega)}E_0E_{D_t}^{1/2}+2\lnorm{\frac{\nb_N\dot{P}}{\nb_NP}}_{L^\infty(\Omega)}E_{D_t},\\
|D_{D_t}|\ls& 2\lnorm{\frac{\nb_N\dot{P}}{\nb_NP}}_{L^\infty(\Omega)}E_0E_{D_t}^{1/2}.
\end{align*}

Denote
	\begin{align*}
\bar{n}_1(t)=&\lnorm{\frac{\nb_N\dot{P}}{\nb_NP}}_{L^\infty(\Omega)}, \\
n_1(t)=&\frac{5}{2}\norm{\dot{g}}_{L^\infty(\Omega)}+\frac{5}{2}\lnorm{\frac{\nb_N \dot{P}}{\nb_NP}}_{L^\infty(\Omega)}+\norm{\D x}_{L^\infty(\Omega)}\norm{\Lie_B\D x}_{L^\infty(\Omega)},\\
\tilde{n}_1(t)=&\norm{\ddot{g}}_{L^\infty(\Omega)}+\norm{\dot{\omega}}_{L^\infty(\Omega)}+2\norm{D_t(\delta_{il}\D x^l\Lie_B\D x^i)}_{L^\infty(\Omega)}+\lnorm{\frac{\nb_N\ddot{P}}{\nb_NP}}_{L^\infty(\Omega)},
\end{align*}
and
\begin{align*}
f_1(t)=\norm{\dot{F}}+\norm{\dot{g}}_{L^\infty(\Omega)}\norm{F}.
\end{align*}
Then, we have the following energy estimates.
\begin{proposition}\label{prop.E1}
	Let $E_1^2(t)=E_{D_t}$, and
	\begin{align*}
	M_1(t)=&4\bar{n}_1^2(t)E_0^2(t)
	+4e^{2\int_0^t n_1(\tau)d\tau}\int_0^t \left(\tilde{n}_1(\tau)E_0(\tau)+f_1(\tau)\right)^2d\tau,
	\end{align*}
	 it holds for \eqref{1formeq} with zero initial data
	\begin{align*}
E_1^2(t)\ls &M_1(t)+\int_0^t M_1(s)e^{t-s} ds.
	\end{align*}
\end{proposition}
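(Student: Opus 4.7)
The obstacle is the term \eqref{ED.4}, namely $-2\langle \dot{\mathcal{A}} W,\ddot{W}\rangle$: by \eqref{est.A} its natural bound brings in $\langle \ddot W,\mathcal{A}\ddot W\rangle^{1/2}$, which is one order higher in time than $E_1$ controls. My plan is to absorb it into a modified energy
\begin{align*}
\tilde{E}_1(t):=E_{D_t}(t)+D_{D_t}(t)=E_1^2(t)+2\langle \dot{\mathcal{A}} W,\dot{W}\rangle,
\end{align*}
exploiting the product-rule identity already displayed above the statement,
\begin{align*}
-2\langle \dot{\mathcal{A}} W,\ddot{W}\rangle=-\dot{D}_{D_t}+2\langle \ddot{\mathcal{A}} W,\dot{W}\rangle+2\langle \dot{\mathcal{A}}\dot{W},\dot{W}\rangle,
\end{align*}
so that in $\dot{\tilde{E}}_1=\dot{E}_{D_t}+\dot{D}_{D_t}$ the dangerous contribution cancels exactly, leaving only terms controllable at the level of $E_0,E_1$.

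First I would assemble the pointwise bound for $\dot{\tilde{E}}_1$. Adding the bounds of $\eqref{ED.1}$--$\eqref{ED.3}$ already recorded to $|2\langle \ddot{\mathcal{A}} W,\dot W\rangle|\ls 2\|\nb_N\ddot P/\nb_N P\|_{L^\infty(\Gamma)}E_0 E_1$ (from \eqref{est.A} applied to the pair $(\dot W,W)$) and $|2\langle \dot{\mathcal{A}}\dot W,\dot W\rangle|\ls 2\bar n_1 E_1^2$ (from \eqref{dotA} with $W$ replaced by $\dot W$), I would verify that the coefficients aggregate exactly into $n_1,\tilde n_1,f_1$, yielding
\begin{align*}
\dot{\tilde{E}}_1\ls 2n_1 E_1^2+2(\tilde n_1 E_0+f_1)E_1.
\end{align*}

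Next I would relate $\tilde E_1$ to $E_1^2$: the already recorded bound $|D_{D_t}|\ls 2\bar n_1 E_0 E_1$ combined with Young's inequality gives $|D_{D_t}|\ls 2\bar n_1^2 E_0^2+\tfrac12 E_1^2$, so that
\begin{align*}
E_1^2\ls 2\tilde E_1+4\bar n_1^2 E_0^2.
\end{align*}
With zero initial data one has $E_{D_t}(0)=D_{D_t}(0)=0$, hence $\tilde E_1(0)=0$. Integrating the pointwise bound, splitting the cross term via $2(\tilde n_1 E_0+f_1)E_1\ls (\tilde n_1 E_0+f_1)^2+E_1^2$, and substituting $E_1^2\ls 2\tilde E_1+4\bar n_1^2 E_0^2$ produces an integral Grönwall inequality for $\tilde E_1$ with exponential weight $e^{2\int_0^t n_1}$; feeding the resulting bound back through $E_1^2\ls 2\tilde E_1+4\bar n_1^2 E_0^2$ gives the stated conclusion, with the summand $4\bar n_1^2 E_0^2$ as the unweighted contribution, $4e^{2\int_0^t n_1}\int_0^t(\tilde n_1 E_0+f_1)^2 d\tau$ as the Grönwall piece, and the outer factor $e^{t-s}$ accounting for the additional $E_1^2$ produced by the Young split of the cross term.

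The main difficulty is this very first step: recognising the offending higher-order expression as a total time derivative modulo terms of strictly lower order. Once $D_{D_t}$ is in hand and cancels the problematic contribution, the remainder of the argument is routine Cauchy--Schwarz, Young, and Grönwall bookkeeping.
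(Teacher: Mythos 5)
Your proposal takes essentially the same route as the paper: the paper also introduces the corrector $D_{D_t}=2\langle\dot{\A}W,\dot W\rangle$ so that $\dot E_{D_t}+\dot D_{D_t}$ contains only terms of order $E_0,E_1$, then closes via the bound $|D_{D_t}|\ls 2\bar n_1 E_0 E_1$ and a Young--Gr\"onwall bookkeeping. Your last paragraph correctly identifies the one nontrivial idea, and the rest matches the paper modulo minor differences in how the weighted Young inequality and the two Gr\"onwall passes are ordered.
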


\begin{proof}
	From the above argument, we have obtained
\begin{align*}
\abs{\frac{d}{dt}|E_{D_t}+D_{D_t}|}=&\abs{\frac{d}{dt}(E_{D_t}+D_{D_t})}\\
\ls& 2n_1(t)|E_{D_t}+D_{D_t}|+2(\tilde{n}_1(t)E_0(t)+f_1(t))E_1(t),
\end{align*}
which yields
\begin{align*}
|E_{D_t}+D_{D_t}|\ls&  2e^{2\int_0^t n_1(\tau)d\tau}\int_0^t \left(\tilde{n}_1(s)E_0(s)+f_1(s)\right)E_1(s)ds.
\end{align*}
Thus,
\begin{align*}
E_1^2(t)\ls &  2e^{2\int_0^t n_1(\tau)d\tau}\int_0^t \left(\tilde{n}_1(s)E_0(s)+f_1(s)\right)E_1(s)ds  +2\bar{n}_1(t)E_0E_1\\
\ls &e^{2\int_0^t n_1(\tau)d\tau}\Big( \frac{1}{2}e^{-2\int_0^t n_1(\tau)d\tau}\int_0^tE_1^2(s)ds\\
&+2e^{2\int_0^t n_1(\tau)d\tau}\int_0^t \left(\tilde{n}_1(s)E_0(s)+f_1(s)\right)^2ds \Big) +2\bar{n}_1^2(t)E_0^2+\frac{1}{2}E_1^2,
\end{align*}
and then
\begin{align*}
E_1^2(t)
\ls &\int_0^tE_1^2(s)ds+M_1(t),
\end{align*}
which implies the desired result by the Gronwall inequality.
\end{proof}

\subsection{The more one order energy estimates with respect to $\Lie_B$} \label{sec.B}

We now analyze the higher order energy functional. 
Let
\begin{align*}
\begin{aligned}
\A_B=\A_{BP},\quad \mc{G}_B=\mc{M}_{g^B}, \quad  g_{ab}^B=\Lie_B g_{ab},\\
\mc{\dot{G}}_B =\mc{M}_{\dot{g}^B},\quad \mc{C}_B=\mc{M}_{\omega^B},\quad \omega_{ab}^B=\Lie_B \omega_{ab}.
\end{aligned}
\end{align*}
From \eqref{Lieeq2}, it follows that
\begin{align*}
L_1\Lie_BW^d=&\Lie_B\ddot{W}^d-\Lie_B^3W^d+\A  \Lie_BW^d+\dot{\mc{G}}\Lie_B\dot{W}^d-\mc{C}\Lie_B\dot{W}^d+\mc{X}\Lie_B^2W^d\no\\
=&\Lie_BF^d-(\A_{B}W^d+\dot{\mc{G}}_{B}\dot{W}^d-\mc{C}_{B}\dot{W}^d+\mc{G}_{B}\ddot{W}^d-\mc{G}_{B}F^d)\no\\
&+ 2 g^{ad}(\delta_{il}\D_a x^l\Lie_B\D_c x^i)_{B}  \Lie_BW^c.
\end{align*}

As similar as for the lowest-order energies, we define 
\begin{align*}
E_B=E(\Lie_BW)=\langle \Lie_B\dot{W}, \Lie_B\dot{W}\rangle +\langle \Lie_BW,(\A +I)\Lie_BW\rangle+\langle \Lie_B^2W,\Lie_B^2W\rangle.
\end{align*}

From \eqref{DtBW}, \eqref{Lie.commut} and $B\cdot N|_\Gamma=0$, we get
\begin{align*}
&-\langle \Lie_B^3W, \Lie_B\dot{W}\rangle=-\int_{\Omega} g_{ad}\Lie_B^3W^d\Lie_B\dot{W}^a dy\\
=&\frac{1}{2}\frac{d}{dt}\int_{\Omega}|\Lie_B^2W|^2dy-\frac{1}{2}\int_{\Omega} \dot{g}_{ab}\Lie_B^2W^a\Lie_B^2W^bdy+\int_{\Omega} (\Lie_Bg_{ab})\Lie_B^2W^a\Lie_B\dot{W}^b dy.
\end{align*}

One has
\begin{align*}
\langle \mc{X}\Lie_B^2W,\dot{W}_T\rangle
=-2\int_{\Omega} \delta_{il}\D_a x^l\Lie_B\D_cx^i\Lie_B^2W^c\dot{W}^a_T dy.
\end{align*}

Thus, by \eqref{Lie.2form}, we get
\begin{align*}
&-\langle \Lie_B^3W, \Lie_B\dot{W}\rangle+\langle \mc{X}\Lie_B^2W,\Lie_B\dot{W}\rangle\no\\
=&\frac{1}{2}\frac{d}{dt}\int_{\Omega}|\Lie_B^2W|^2dy-\frac{1}{2}\int_{\Omega} \dot{g}_{ab}\Lie_B^2W^a\Lie_B^2W^bdy\no\\
&+\int_{\Omega} \delta_{il}(\Lie_B\D_a x^l\D_cx^i-\D_a x^l\Lie_B\D_cx^i)\Lie_B^2W^a\Lie_B\dot{W}^b dy.
\end{align*}

From the antisymmetry of $\dot{\omega}$, one has
\begin{subequations}\label{ET'}
	\begin{align}
	\dot{E}_B=&2\langle \Lie_B\ddot{W}+\A \Lie_BW+\dot{\mc{G}}\Lie_B\dot{W}, \Lie_B\dot{W}\rangle+2\langle \Lie_BW,\Lie_B\dot{W}\rangle +D_t\langle \Lie_B^2 {W},\Lie_B^2 {W}\rangle\no\\
	&-\langle \dot{\mc{G}}\Lie_B\dot{W}, \Lie_B\dot{W}\rangle+\langle \dot{\mc{G}}\Lie_BW,\Lie_B{W}\rangle+\langle \Lie_BW,\dot{\A }\Lie_BW\rangle+\langle \dot{\mc{G}}\Lie_BW,{\A }\Lie_BW\rangle\no\\
	=&2\langle L_1\Lie_BW, \Lie_B\dot{W}\rangle +2\langle \dot{\mc{C}}\Lie_B\dot{W}, \Lie_B\dot{W}\rangle+2\langle  \Lie_B^3W,\Lie_B\dot{W}\rangle-2\langle \mc{X}\Lie_B^2W, \Lie_B\dot{W}\rangle\no\\
	&+2\langle \Lie_BW,\Lie_B\dot{W}\rangle +D_t\langle \Lie_B^2 {W},\Lie_B^2 {W}\rangle-\langle \dot{\mc{G}}\Lie_B\dot{W}, \Lie_B\dot{W}\rangle+\langle \dot{\mc{G}}\Lie_BW,\Lie_B{W}\rangle\no\\
	&+\langle \Lie_BW,\dot{\A }\Lie_BW\rangle+\langle \dot{\mc{G}}\Lie_BW,{\A }\Lie_BW\rangle\no\\
	=&2\langle \Lie_BF+\mc{G}_BF, \Lie_B\dot{W}\rangle-2\langle\dot{\mc{G}}_B\dot{W}, \Lie_B\dot{W}\rangle+2\langle\mc{C}_B\dot{W}, \Lie_B\dot{W}\rangle\!-\!2\langle\mc{G}_B\ddot{W}, \Lie_B\dot{W}\rangle\label{ET'.1}\\ 
	&-4\langle \Lie_B(\delta_{il}\D_a x^l\Lie_B\D_c x^i)\Lie_BW^c, \Lie_B\dot{W}^a\rangle\label{ET'.2}\\
	&-\langle \dot{\mc{G}}\Lie_B\dot{W}, \Lie_B\dot{W}\rangle+\langle \dot{\mc{G}}\Lie_BW,\Lie_BW\rangle+\langle \Lie_BW,\dot{\A }\Lie_BW\rangle+\langle \dot{\mc{G}}\Lie_BW,{\A }\Lie_BW\rangle\label{ET'.5}\\
	&+\int_{\Omega} \dot{g}_{ab}\Lie_B^2W^a\Lie_B^2W^bdy+2\langle \Lie_BW,\Lie_B\dot{W}\rangle\label{ET'.5'}\\
	&-2\langle\A_BW, \Lie_B\dot{W}\rangle.\label{ET'.6}
	\end{align}
\end{subequations}

Now, we control the term $\langle\A_TW, \dot{W}_T\rangle$. As the same argument as in the estimates of $E_1(t)$,  we have to deal with it in an indirect way, by including it in the energies.
Let
\begin{align*}
D_B=2\langle\A_BW, \Lie_BW\rangle,
\end{align*}
then
\begin{align*}
\dot{D}_B=&2\langle\dot{\A }_BW, \Lie_BW\rangle+2\langle\A_B\dot{W},\Lie_BW\rangle+2\langle\A_BW, \Lie_B\dot{W}\rangle.
\end{align*}

Therefore, we obtain
\begin{align}\label{ET'.7}
\dot{E}_B+\dot{D}_B=&\eqref{ET.1}+\eqref{ET.2}+\eqref{ET.5}+\eqref{ET.5'}\\
&+2\langle\dot{\A }_BW, \Lie_BW\rangle+2\langle\A_B\dot{W}, \Lie_BW\rangle.\no 
\end{align}

From \eqref{Malpha}, \eqref{op} and \eqref{dotA}, it yields
\begin{align*}
|\eqref{ET'.1}|\ls & 2\big(\norm{\Lie_BF}+\norm{\Lie_Bg}_{L^\infty(\Omega)}\norm{F}+\norm{\Lie_B\dot{g}}_{L^\infty(\Omega)}E_0\\
&\qquad+\norm{\Lie_B\omega}_{L^\infty(\Omega)}E_0+\norm{\Lie_Bg}_{L^\infty(\Omega)}E_1\big)E_B^{1/2},
\end{align*}
\begin{align*}
|\eqref{ET'.2}|\ls &4\norm{\Lie_B(\delta_{il}\D x^l\Lie_B\D x^i)}_{L^\infty(\Omega)}E_0E_B^{1/2},
\end{align*}
\begin{align*}
|\eqref{ET'.5}+\eqref{ET'.5'}|\ls \left(1+\norm{\dot{g}}_{L^\infty(\Omega)}+\lnorm{\frac{\nb_N\dot{P}}{\nb_NP}}\right)E_B,
\end{align*}
and
\begin{align*}
|\eqref{ET'.7}|\ls 2\left(\lnorm{\frac{\nb_N(B\dot{P})}{\nb_NP}}E_0+\lnorm{\frac{\nb_N(BP)}{\nb_NP}}E_1\right)E_B^{1/2}.
\end{align*}

Let
\begin{align*}
E_1^{B}:= E_B^{1/2},
\end{align*}
and
\begin{align*}
\bar{n}_1^{B}(t)=&\lnorm{\frac{\nb_N(BP)}{\nb_NP}}_{L^\infty(\Omega)}E_0,\qquad
n_1^{B}(t)=\frac{1}{2}\left(1+\norm{\dot{g}}_{L^\infty(\Omega)}+\lnorm{\frac{\nb_N\dot{P}}{\nb_NP}}\right), \\
\tilde{n}_1^{B}(t)=&\norm{\Lie_B\dot{g}}_{L^\infty(\Omega)}E_0 +\norm{\Lie_B\omega}_{L^\infty(\Omega)}E_0+\norm{\Lie_Bg}_{L^\infty(\Omega)}E_1\\
&+2\norm{\Lie_B(\delta_{il}\D x^l\Lie_B\D x^i)}_{L^\infty(\Omega)}E_0+\lnorm{\frac{\nb_N(B\dot{P})}{\nb_NP}}E_0+\lnorm{\frac{\nb_N(BP)}{\nb_NP}}E_1,\\
f_1^{B}(t)=&\norm{\Lie_BF}+\norm{\Lie_Bg}_{L^\infty(\Omega)}\norm{F}.
\end{align*}
Then, we have the following estimates.

\begin{proposition}\label{prop.E1S'}
	Let
	\begin{align*}
	M_1^B(t)=2\bar{n}_1^B(t)+2\int_0^t (\tilde{n}_1^B(\tau)+f_1^{B}(\tau))d\tau,
	\end{align*}
	it holds
	\begin{align*}
	E_1^{B}(t)\ls M_1^B(t)+ 2\int_0^t n_1^B(s)M_1^B(s) \exp\left(2\int_s^t n_1^{B}(\tau)d\tau \right)ds.
	\end{align*}
\end{proposition}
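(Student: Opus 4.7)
The plan is to emulate the proof of Proposition~\ref{prop.E1}. The expansion \eqref{ET'.7} has already displayed $\dot{E}_B+\dot{D}_B$ as the sum of the groups \eqref{ET'.1}, \eqref{ET'.2}, \eqref{ET'.5}, \eqref{ET'.5'} together with the two cross-terms $2\langle\dot{\A}_BW,\Lie_BW\rangle$ and $2\langle\A_B\dot{W},\Lie_BW\rangle$ that arose when differentiating $D_B$ in order to cancel the problematic term $-2\langle\A_BW,\Lie_B\dot{W}\rangle$ in \eqref{ET'.6}. My first step is to combine the pointwise bounds recorded just before the proposition into the single differential inequality
\begin{align*}
\bigl|\dot{E}_B+\dot{D}_B\bigr|\ls 2\,n_1^B(t)\,E_B(t)+2\bigl(\tilde{n}_1^B(t)+f_1^B(t)\bigr)\,E_1^B(t).
\end{align*}
The $E_B$-contribution comes entirely from \eqref{ET'.5}+\eqref{ET'.5'}, while each of the remaining groups produces only a factor $E_B^{1/2}=E_1^B$ through \eqref{Malpha}, \eqref{op}, and the boundary estimates \eqref{est.A}/\eqref{dotA} applied to $\A_B=\A_{BP}$ and $\dot{\A}_B=\A_{D_t(BP)}$; these are legitimate because $P|_\Gamma=0$ and $B\cdot N|_\Gamma=0$ together force $BP|_\Gamma=D_t(BP)|_\Gamma=0$.

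My second step is to control $D_B$ itself. Applying the same boundary estimate \eqref{est.A} to $\A_B=\A_{BP}$ yields
\begin{align*}
|D_B(t)|=2\bigl|\langle\A_BW,\Lie_BW\rangle\bigr|\ls 2\,\lnorm{\tfrac{\nb_N(BP)}{\nb_NP}}_{L^\infty(\Gamma)}E_0(t)\,E_1^B(t)=2\,\bar{n}_1^B(t)\,E_1^B(t).
\end{align*}
Integrating the first inequality from $0$ to $t$ with zero initial data (so that $E_B(0)=D_B(0)=0$) and then writing $(E_1^B)^2=E_B\ls |E_B+D_B|+|D_B|$ combines the two bounds into the closed quadratic integral inequality
\begin{align*}
\bigl(E_1^B(t)\bigr)^2\ls 2\,\bar{n}_1^B(t)\,E_1^B(t)+\int_0^t\!\bigl[2n_1^B(s)\bigl(E_1^B(s)\bigr)^2+2(\tilde{n}_1^B+f_1^B)(s)\,E_1^B(s)\bigr]\,ds.
\end{align*}

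The final and most delicate step is to linearize this quadratic inequality. I plan to do it by passing to the non-decreasing majorant $u(t):=\sup_{0\ls s\ls t}E_1^B(s)$. Bounding $(E_1^B(s))^2\ls u(s)\,E_1^B(s)$ and $E_1^B(\cdot)\ls u(\cdot)\ls u(t)$ inside the integrand, dividing by $u(t)$, and taking the supremum on the left collapses the inequality to
\begin{align*}
u(t)\ls M_1^B(t)+\int_0^t 2\,n_1^B(s)\,u(s)\,ds.
\end{align*}
The integral form of Gronwall's inequality then produces
\begin{align*}
E_1^B(t)\ls u(t)\ls M_1^B(t)+2\int_0^t n_1^B(s)\,M_1^B(s)\,\exp\!\left(2\int_s^t n_1^B(\tau)\,d\tau\right)ds,
\end{align*}
which is exactly the claimed estimate. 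The main obstacle, shared with Proposition~\ref{prop.E1}, is the term $-2\langle\A_BW,\Lie_B\dot{W}\rangle$, whose natural bound via \eqref{est.A} would require $\langle\Lie_B\dot{W},\A\Lie_B\dot{W}\rangle^{1/2}$, a quantity strictly stronger than $E_1^B$; carrying the auxiliary energy $D_B$ along is precisely the device that converts this term into the benign contributions that appear on the right-hand side above.
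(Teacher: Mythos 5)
Your proof is correct and follows essentially the same route as the paper's own: you reproduce the differential inequality for $\dot{E}_B+\dot{D}_B$, bound $|D_B|$ by $2\bar{n}_1^B E_1^B$ via the boundary estimate for $\A_B=\A_{BP}$, and linearize the resulting quadratic integral inequality by dividing by $\sup_{[0,t]}E_1^B$ before applying Gronwall. Your explicit introduction of the nondecreasing majorant $u(t)=\sup_{[0,t]}E_1^B$ makes the division-and-supremum step slightly more transparent than the paper's one-line version, but the underlying argument is identical.
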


\begin{proof}
	From the above argument, we have obtained
	\begin{align*}
	\dot{E}_B+\dot{D}_B
	\ls 2E_1^B(f_1^{B}+\tilde{n}_1^B+n_1^BE_1^B).
	\end{align*}
	Since $E_B(0)=D_B(0)=0$, the integration over $[0,t]$ in time gives
	\begin{align*}
	{E}_B 
	\ls &2E_1^B\bar{n}_1^B+2\int_0^t E_1^B(n_1^BE_1^B+\tilde{n}_1^B+f_1^{B})d\tau.
	\end{align*}
	Taking the supremum on $[0,t]$ in time and dividing by $\sup_{[0,t]}E_1^B$, we get
	\begin{align*}
	E_1^{B}(t)\ls &M_1^B(t)+2\int_0^t n_1^B(\tau)E_1^B(\tau)d\tau.
	\end{align*}
	By the Gronwall inequality, we can obtain the desired estimates.
\end{proof}

\subsection{Construction of tangential vector fields and the div-curl decomposition} \label{sec.tang}

A basic estimate in the Euclidean coordinates is that derivatives of vector fields can be estimated by derivatives of the curl, the divergence and the tangential derivatives, as proved in \cite[Lemma 11.1]{L1}. But that estimate is not invariant under changes of coordinates, so we also expect to replace it by an inequality that also holds in the Lagrangian coordinates. After that we need to derive its higher-order versions as well. Both the curl and the divergence are invariant, but the other terms are not. There are two ways to make these terms to be invariant. One is to replace the differentiation by covariant differentiation as used in \cite{CL,HLarma}, and the other is to replace it by Lie derivatives with respect to tangential vector fields introduced below, as the same as used in \cite{L1}. Both ways result in a lower-order term involving only the norm of the $1$-form itself multiplied by a constant relative to the coordinates.

\begin{definition}
	Let $c_1$ be a constant satisfying
	\begin{align*}
	\sum_{a,b} (|g_{ab}|+|g^{ab}|)\ls c_1^2, \quad \abs{\frac{\D x}{\D y}}^2+\abs{\frac{\D y}{\D x}}^2\ls c_1^2,
	\end{align*}
	and let $K_1$ denote a continuous function of $c_1$.
\end{definition}

Indeed, the bound for the Jacobian of the coordinate and its inverse follows from the bound for the metric and its inverse, and the bound for the former implies an equivalent bound for the latter with $c_1^2$ multiplied by $n$.

Following \cite{L1}, we now construct the tangential divergence-free vector fields  which are independent of time and expressed of the form $T^a(y)\frac{\D}{\D y^a}$ in the Lagrangian coordinates. Due to $\det(\frac{\D x}{\D y})=1$, the divergence-free condition reduces to
\begin{align*}
\D_a T^a=0.
\end{align*}

Because $\Omega$ is just the unit ball in $\R^n$, the vector fields can be explicitly expressed. The rotation vector fields
$y^a\D_b-y^b\D_a$ span the tangent space of the boundary and are divergence-free in the interior. It is clear that $B=B^a\D_a$ belongs to this space. Moreover, they also span the tangent space of the level sets of the distance function from the boundary in the Lagrangian coordinates $d(y)=\dist(y,\Gamma)=1-|y|$ for $y\neq 0$ away from the origin. We denote this set of vector fields by $\mc{S}_0$. Thus, $B\in \mc{S}_0$.

We can also construct a finite set of vector fields, as the same as in \cite{L1,L2}, which span the tangential space when $d\gs d_0$ and are compactly supported in the set where $d\gs d_0/2$. We denote this set of vector fields by $\mc{S}_1$. Let $\mc{S}=\mc{S}_0\cup\mc{S}_1$ denote the family of space tangential vector fields, and let $\mc{T}=\mc{S}\cup \{D_t\}$ denote the family of space-time tangential vector fields.

Let the radial vector field be $R=y^a\D_a$. Then, $\D_aR^a=n$ is not $0$ but it suffices for our purposes that it is constant. Let $\mc{R}=\mc{S}\cup \{R\}$, which spans the full tangent space of the space everywhere. Let $\mc{U}=\mc{S}\cup \{R\}\cup \{D_t\}$ denote the family of all vector fields. Note that the radial vector field commutes with the rotations, i.e.,
\begin{align*}
[R,S]=0, \quad S\in\mc{S}_0.
\end{align*}
Furthermore, the commutators of two vector fields in $\mc{S}_0$ is just another vector field in $\mc{S}_0$. For $i=0,1$, let $\mc{R}_i=\mc{S}_i\cup \{R\}$, $\mc{T}_i=\mc{S}_i\cup \{D_t\}$ and $\mc{U}_i=\mc{T}_i\cup\{R\}$.

Now, we recall some estimates as follows.

\begin{lemma}[\mbox{\cite[Lemma 11.3]{L1}}] \label{lem.11.3}
	In the Lagrangian frame,  with $\underline{W}_a=g_{ab}W^b$, we have
	\begin{align}
	|\Lie_U W|\ls& K_1\left(|\curl \underline{W}|+|\dv W|+\sum_{S\in\mc{S}}|\Lie_S W|+[g]_1|W|\right), \quad U\in\mc{R},\label{RW}\\
	|\Lie_U W|\ls& K_1\left(|\curl \underline{W}|+|\dv W|+\sum_{T\in\mc{T}}|\Lie_T W|+[g]_1|W|\right), \quad U\in\mc{U},\label{UW}
	\end{align}
	where $[g]_1=1+|\D g|$. Furthermore,
	\begin{align}\label{DW}
	|\D W|\ls K_1\left( |\Lie_RW|+\sum_{S\in\mc{S}}|\Lie_S W|+|W|\right).
	\end{align}
	When $d(y)\ls d_0$, we may replace the sums over $\mc{S}$ by the sums over $\mc{S}_0$ and the sum over $\mc{T}$ by the sum over $\mc{T}_0$.
\end{lemma}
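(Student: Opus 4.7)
The plan is to prove all three inequalities by combining a geometric decomposition of the tangent space with the standard div-curl-tangential decomposition of the full gradient $\D W$, keeping track of metric-dependent lower-order corrections.

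First, I would establish (DW). The key point is that $\{R\}\cup\mc{S}$ spans $\R^n$ at every $y\in\Omega$: away from the origin the set $\{R,S_1,\ldots,S_{n-1}\}$ with rotations $S_i\in\mc{S}_0$ is linearly independent and spans, while in the interior region $d(y)\gs d_0/2$ the family $\mc{S}_1$ was constructed to span. Thus each standard basis vector $e_a$ can be expanded as a linear combination $\sum_U c_U^a(y)\,U(y)$ with coefficients bounded by a continuous function of $c_1$ (on each of the two regions separately, patched by a partition of unity). Since $\D_bR^a=\delta_b^a$ and the rotations have bounded first derivatives, writing $U^c\D_cW^a=\Lie_UW^a+W^b\D_bU^a$ gives $|\D W|\ls K_1(|\Lie_RW|+\sum_{S\in\mc{S}}|\Lie_SW|+|W|)$, which is exactly (DW).

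Next, for (RW), the only nontrivial case is $U=R$, since for $U\in\mc{S}$ the target $|\Lie_UW|$ already appears on the right-hand side. To bound $|\Lie_RW|$ I would work in a local orthonormal frame $\{e_1,\ldots,e_{n-1},e_n\}$ in which $e_n$ is the radial direction $y/|y|$ and the tangent directions $e_1,\ldots,e_{n-1}$ span the tangent space to the sphere through $y$, which is itself spanned by rotations in $\mc{S}_0$. Tangential derivatives $\D_iW^k$ for $i<n$ are controlled by Lie derivatives along rotations plus $|W|$-terms. The radial derivative splits into: the normal-normal component $\D_nW^n$, recovered from $\dv W-\sum_{i<n}\D_iW^i$; and mixed components $\D_nW^i$ for $i<n$, recovered from $(\curl\underline{W})_{ni}=\D_n\underline{W}_i-\D_i\underline{W}_n$ after inverting the identity $\underline{W}_a=g_{ab}W^b$, which produces $(\D g)W$ error terms absorbed into $[g]_1|W|$. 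Near the origin where this frame degenerates, the vector fields in $\mc{S}_1$ span $\R^n$ by construction, so the argument reduces locally to (DW) with the radial Lie derivative on its right-hand side replaced by vector fields already in $\mc{S}_1\subset\mc{S}$. Inequality (UW) then follows verbatim with $\mc{T}=\mc{S}\cup\{D_t\}$ in place of $\mc{S}$, since the spatial decomposition is unaffected by adjoining $D_t$ to the right-hand side, and the final assertion that $\mc{S}$ may be replaced by $\mc{S}_0$ (and $\mc{T}$ by $\mc{T}_0$) when $d(y)\ls d_0$ follows because $\mc{S}_1$ is supported where $d\gs d_0/2$.

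The main obstacle I anticipate is the careful bookkeeping of the metric corrections. Translating the familiar Euclidean identity relating $\D W$ to divergence, curl, and tangential derivatives into the Lagrangian frame requires converting between $\D_aW^b$ and $\D_a\underline{W}_b=g_{bc}\D_aW^c+(\D_ag_{bc})W^c$, and between the Cartesian $\D_aW^a$ and the Lagrangian form $\kappa^{-1}\D_a(\kappa W^a)$ of the divergence. All additional terms generated in this translation are linear in $W$ with coefficients involving $\D g$, and must be absorbed uniformly into the single factor $[g]_1|W|$ by means of the a priori bound $\sum_{a,b}(|g_{ab}|+|g^{ab}|)\ls c_1^2$ built into the definition of $K_1$.
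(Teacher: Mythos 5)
Your proof is correct and reconstructs the same argument that underlies Lindblad's Lemma 11.3 (which the paper cites without re-proving): establish \eqref{DW} from the spanning property of $\mc{R}$ and the relation $U^c\D_cW^a=\Lie_UW^a+W^c\D_cU^a$, then bound $\Lie_RW$ by a div--curl decomposition of the radial derivative in a frame adapted to the spheres $|y|=\mathrm{const}$ near the boundary, with $[g]_1|W|$ absorbing the $\D g$-corrections generated when passing between $W$ and $\underline{W}_a=g_{ab}W^b$, and with $\mc{S}_1$ taking over near the origin. The only small slip is that $\mc{S}_1$ is constructed to span for $d\gs d_0$ rather than $d\gs d_0/2$ (the latter is only its support), but this does not affect the argument since $\{R\}\cup\mc{S}_0$ already spans away from the origin, and your reasoning for replacing $\mc{S}$ by $\mc{S}_0$ when $d\ls d_0$ should cite the spanning of $\mc{S}_0$ near the boundary rather than the vanishing of $\mc{S}_1$ (which only holds for $d<d_0/2$).
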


Next, we recall the higher-order versions of the inequality in last lemma. We need to apply the lemma to $W$ replaced by $\Lie_U^JW$, and the divergence term will vanish in our applications. We will be able to control the curl of $(\Lie_U^J\underline{W})_a=\Lie_U^J(g_{ab}W^b)$, which is different from the curl of $(\underline{\Lie_U^JW})_a=g_{ab}\Lie_U^JW^b$, but the difference is lower order and can be easily controlled. We first introduce some notation.

\begin{definition}\label{def.betasV}
	Let $\beta$ be a function, a $1$- or $2$-form, or vector field, and let $\mc{V}$ be any of our families of vector fields. Set
	\begin{align*}
	\begin{aligned}
	|\beta|_s^{\mc{V}}=&\sum_{|J|\ls s, J\in \mc{V}}\abs{\Lie_S^J\beta},\\
	[\beta]_\mu^{\mc{V}}=&\sum_{s_1+\cdots+s_k\ls \mu,s_i\gs 1} |\beta|_{s_1}^{\mc{V}}\cdots|\beta|_{s_k}^{\mc{V}}, \quad [\beta]_0^{\mc{V}}=1.
	\end{aligned}
	\end{align*}
	In particular, $|\beta|_r^{\mc{R}}$ and $|\beta|_r^{\mc{U}}$ are equivalent to $\sum_{|\alpha|\ls r}\abs{\D_y^\alpha \beta}$ and   $\sum_{|\alpha|+k\ls r}\abs{D_t^k\D_y^\alpha \beta}$, respectively.
\end{definition}

\begin{lemma}[\mbox{\cite[Lemma 11.5]{L1}}] \label{lem.divcurl}
	With the convention that $|\curl \underline{W}|_{-1}^{\mc{V}}=|\dv W|_{-1}^{\mc{V}}$ $=0$, we have 
	\begin{align*}
	|W|_r^{\mc{R}}\ls& K_1\left(|\curl \underline{W}|_{r-1}^{\mc{R}}+|\dv W|_{r-1}^{\mc{R}}+|W|_r^{\mc{S}}+\sum_{s=1}^r|g|_s^{\mc{R}}|W|_{r-s}^{\mc{R}}\right),\\
	|W|_r^{\mc{R}}\ls& K_1\sum_{s=1}^r[g]_s^{\mc{R}}\left(|\curl \underline{W}|_{r-1-s}^{\mc{R}}+|\dv W|_{r-1-s}^{\mc{R}}+|W|_{r-s}^{\mc{S}}\right).
	\end{align*}
	The same inequalities also hold with $\mc{R}$ replaced by $\mc{U}$ everywhere and $\mc{S}$ replaced by $\mc{T}$:
	\begin{align*}
	|W|_r^{\mc{U}}\ls& K_1\left(|\curl \underline{W}|_{r-1}^{\mc{U}}+|\dv W|_{r-1}^{\mc{U}}+|W|_r^{\mc{T}}+\sum_{s=1}^r|g|_s^{\mc{U}}|W|_{r-s}^{\mc{U}}\right),\\
	|W|_r^{\mc{U}}\ls& K_1\sum_{s=1}^r[g]_s^{\mc{U}}\left(|\curl \underline{W}|_{r-1-s}^{\mc{U}}+|\dv W|_{r-1-s}^{\mc{U}}+|W|_{r-s}^{\mc{T}}\right).
	\end{align*}
\end{lemma}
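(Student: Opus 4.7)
The plan is to prove this by strong induction on $r$, with the base case $r=1$ being exactly Lemma~\ref{lem.11.3} (under the stated convention that $|\curl\underline{W}|_{-1}^{\mc{V}}=|\dv W|_{-1}^{\mc{V}}=0$). For the inductive step I fix a multi-index $J$ of order $r$ with $J\in\mc{R}$, split off one vector field as $\Lie_U^J=\Lie_U\Lie_U^{J'}$ with $|J'|=r-1$, and apply the base lemma to $V=\Lie_U^{J'}W$. This bounds $|\Lie_U V|$ pointwise by $|\curl\underline{V}|$, $|\dv V|$, $|V|_1^{\mc{S}}$ and $[g]_1|V|$, and the remaining task is to re-express these four quantities in terms of iterated Lie derivatives of $W$, $\curl\underline{W}$, $\dv W$ and $g$.

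The identities that drive this reduction are the commutation of $\Lie_U$ with exterior differentiation and with the divergence. Because $\curl\underline{W}=d\underline{W}$ as $2$-forms, $\Lie_U^{J'}(\curl\underline{W})=\curl(\Lie_U^{J'}\underline{W})$ is automatic. A one-line coordinate computation gives $\dv(\Lie_U W)-\Lie_U(\dv W)=-W^b\D_b(\dv U)$, which vanishes for every $U\in\mc{R}$ since $\dv S=0$ for $S\in\mc{S}$ and $\dv R=n$ is a constant; for $\mc{U}$ one uses additionally that $\dv D_t=\dv v=0$. Iterating, $\dv\Lie_U^{J'}W=\Lie_U^{J'}\dv W$. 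The only genuine discrepancy is between $\underline{\Lie_U^{J'}W}_a=g_{ab}\Lie_U^{J'}W^b$ and $\Lie_U^{J'}\underline{W}_a$; the Leibniz rule writes the difference as a sum of products $(\Lie_U^{K_1}g)(\Lie_U^{K_2}W)$ with $|K_1|+|K_2|\leq|J'|$ and $|K_1|\geq 1$, and taking $\curl$ of this (costing one more derivative) produces exactly the correction $|g|_s^{\mc{R}}|W|_{r-s}^{\mc{R}}$, $s\geq 1$. The $[g]_1|V|$ factor from the base lemma contributes in the same way. Summing these pointwise bounds over all $J$ with $|J|\leq r$ delivers the first inequality.

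The second inequality I would derive from the first by a separate finite induction on $r$: feed $|W|_{r-s}^{\mc{R}}$ on the right back into the first estimate and reorganize the resulting products of $|g|$-factors using the very definition $[g]_s^{\mc{R}}=\sum_{s_1+\cdots+s_k\leq s,\,s_i\geq 1}|g|_{s_1}^{\mc{R}}\cdots|g|_{s_k}^{\mc{R}}$. The $\mc{U}$/$\mc{T}$ versions require no new idea: the base case in those families is covered by Lemma~\ref{lem.11.3} and the commutation of $\dv$ and $\curl$ with every vector field in the family continues to hold, including for $D_t$.

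The main obstacle I anticipate is the combinatorial bookkeeping in this iteration: iterated Lie derivatives of $g_{ab}W^b$ generate many cross-terms of various weights, and the sums $|g|_s^{\mc{R}}|W|_{r-s}^{\mc{R}}$ and $[g]_s^{\mc{R}}(|\curl\underline{W}|+|\dv W|+|W|^{\mc{S}})_{r-1-s}^{\mc{R}}$ must capture all of them with matching total order. I do not expect new analytic difficulty beyond the base case; the work is algebraic and parallels \cite{L1}.
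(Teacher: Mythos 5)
Your plan attacks the lemma in the natural way, and the two commutation identities you isolate, namely $\Lie_U^{J'}\!\circ\curl = \curl\circ\Lie_U^{J'}$ on $1$-forms and $\dv\Lie_U W = \Lie_U\dv W - W^b\D_b(\dv U)$ with $\dv U$ constant for every $U\in\mc{R}\cup\{D_t\}$, are both correct and do exactly the work you want on the $\curl$ and $\dv$ terms, while the Leibniz expansion of $g_{ab}\Lie_U^{J'}W^b$ versus $\Lie_U^{J'}(g_{ab}W^b)$ does produce precisely the $|g|_s^{\mc{R}}|W|_{r-s}^{\mc{R}}$ corrections. The paper itself gives no proof of this statement (it is quoted verbatim from Lindblad's Lemma~11.5), so there is no internal argument to compare against; I can only assess your argument on its own terms.

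The gap is in the fourth term coming from Lemma~\ref{lem.11.3}, the one you list as $|V|_1^{\mc{S}}$ and then never return to. After applying the base lemma to $V=\Lie_U^{J'}W$ you are left with $\sum_{S\in\mc{S}}|\Lie_S\Lie_U^{J'}W|$, which is still an order-$r$ derivative of $W$; it is \emph{not} a term of the form $|W|_r^{\mc{S}}$ unless $J'$ happens to lie entirely in $\mc{S}$, and it is of the same order as $|W|_r^{\mc{R}}$, so a single strong induction on $r$ cannot absorb it without becoming circular. This is actually the content of the lemma: trading radial derivatives for tangential ones costs a curl and a divergence at each step, and that trade has to be carried out derivative by derivative. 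Closing the argument requires a second, nested induction on the number of occurrences of $R$ (and of $D_t$ in the $\mc{U}$ case) in the multi-index $J$, together with the commutation facts that make the reshuffling harmless: $[R,S]=0$ for $S\in\mc{S}_0$, $[S,S']\in\mc{S}_0$ for $S,S'\in\mc{S}_0$, $[D_t,T]=0$ for time-independent $T$, and for the interior fields $\mc{S}_1$ the commutators with $R$ are smooth, compactly supported in $d\gs d_0/2$, and hence only produce lower-order contributions that the induction hypothesis already controls. With that secondary induction in place, the term $\sum_S|\Lie_S\Lie_U^{J'}W|$ has strictly fewer radial factors than $\Lie_U^JW$ and the recursion terminates at $|W|_r^{\mc{S}}$. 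Your closing step for the second inequality, substituting the first estimate into itself and reorganizing with $[g]_s^{\mc{R}}$, is fine, but it inherits the same gap, so fix the $|V|_1^{\mc{S}}$ step first.
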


\subsection{Commutators between the linearized equation and Lie derivatives with respect to $B$}

In order to get the higher-order energy estimates of tangential derivatives, we first commute tangential vector fields through the linearized equation. 

Let $T\in\mc{T}$ be a tangential vector field, and recall that $[\Lie_T,D_t]=0$ and that if $W$ is divergence-free, then so does $\Lie_TW$. Now, we apply  Lie derivatives $\Lie_T^I=\Lie_{T_{i_1}}\cdots\Lie_{T_{i_r}}$ with the multi-index $I=(i_1,\cdots,i_r)$ to \eqref{1formeq}.

From \eqref{A.Lie.Leib}, we have for $r=|I|$,
\begin{align*}
\Lie_T^I(g_{ab}\ddot{W}^b)=&\sum_{I_1+I_2=I}\bic_r^{|I_1|}\Lie_T^{I_1}g_{ab}\Lie_T^{I_2}\ddot{W}^b=:c_{I_1I_2}^I\Lie_T^{I_1}g_{ab}\Lie_T^{I_2}\ddot{W}^b,
\end{align*}
where we sum over all $I_1+I_2=I$ and $c_{I_1I_2}^I=1$ (only for the simplicity of summing over the repeated indice) in last expression.

From \eqref{A.Lieq} and the identity
\begin{align*}
T(\D_cPW^c)=&T^d\D_d(\D_cPW^c)
=T^d\D_d\D_cPW^c+T^d\D_cP\D_dW^c\no\\
=&\D_c(T^d\D_dP)W^c-(\D_cT^d)\D_dPW^c+\D_cPT^d\D_dW^c\no\\
=&\D_c(TP)W^c-\D_cP(\D_dT^c)W^d+\D_cPT^d\D_dW^c\no\\
=&\D_c(TP)W^c+\D_cP\Lie_TW^c,
\end{align*}
 one has
\begin{align*}
\Lie_T(\D_a(\D_cPW^c))=&\D_aT(\D_cPW^c)
=\D_a(\D_c(TP)W^c+\D_cP\Lie_TW^c).
\end{align*}
Then we have inductively
\begin{align}\label{LTA}
\Lie_T^I(\D_a(\D_cPW^c))=&\D_aT^I(\D_cPW^c)
=c_{I_1I_2}^I\D_a(\D_c(T^{I_1}P)\Lie_T^{I_2}W^c).
\end{align}
Hence, we obtain
\begin{align}\label{Lieeq}
&c_{I_1I_2}^I(\Lie_T^{I_1}g_{ab})\Lie_T^{I_2}\ddot{W}^b-c_{I_1I_2}^I(\Lie_T^{I_1}g_{ab})\Lie_T^{I_2}\Lie_B^2{W}^b-c_{I_1I_2}^I\D_a(\D_c(T^{I_1}P)\Lie_T^{I_2}W^c)\\
=&-\D_a T^Iq-c_{I_1I_2}^I(\Lie_T^{I_1}(\dot{g}_{ab}-\omega_{ab}))\Lie_T^{I_2}\dot{W}^b+c_{I_1I_2}^I(\Lie_T^{I_1}g_{ab})\Lie_T^{I_2}F^b\no\\
&+ 2c_{I_1I_2}^I(\Lie_T^{I_1}(\delta_{il}\D_a x^l\Lie_B\D_c x^i))  \Lie_T^{I_2}\Lie_BW^c.\no
\end{align}

Denote 
\begin{align*}
\begin{aligned}
W_I=&\Lie_T^I W,\; F_I=\Lie_T^I F,\;P_I=T^IP,\; q_I=T^Iq,\\ (\cdot)_I=&\Lie_T^I(\cdot),\; g_{ab}^I=\Lie_T^Ig_{ab},\; \omega_{ab}^I=\Lie_T^I\omega_{ab},
\end{aligned}
\end{align*}
and $\dot{g}_{ab}^I=D_t\Lie_T^Ig_{ab}=\Lie_T^I\dot{g}_{ab}$, $\dot{W}_I=D_tW_I=\Lie_T^I\dot{W}$, etc. Then, \eqref{Lieeq} can be written as
\begin{align}\label{Lieeq1}
&c_{I_1I_2}^I g_{ab}^{I_1}\ddot{W}^b_{I_2}-c_{I_1I_2}^I g_{ab}^{I_1}(\Lie_B^2{W})^b_{I_2}-c_{I_1I_2}^I\D_a(\D_cP_{I_1}W^c_{I_2})\no\\
=&-\D_a q_I-c_{I_1I_2}^I(\dot{g}_{ab}^{I_1}-\omega_{ab}^{I_1})\dot{W}^b_{I_2}\no\\
&+ 2c_{I_1I_2}^I(\delta_{il}\D_a x^l\Lie_B\D_c x^i)_{I_1}  (\Lie_BW)^c{I_2}+c_{I_1I_2}^Ig_{ab}^{I_1}F^b_{I_2}.
\end{align}

Next, we project each term onto the divergence-free vector fields and introduce some new notation for the operators
\begin{align}\label{notation}
\begin{aligned}
\A_I W^a=\A_{P_I}W^a,\quad \mc{G}_IW^a=\Pdv (g^{ac}g_{cb}^I W^b), \\ \dot{\mc{G}}_I W^a=\Pdv(g^{ac}\dot{g}_{cb}^I W^b),\quad \mc{C}_IW^a=\Pdv (g^{ac}\omega_{cb}^I W^b),
\end{aligned}
\end{align}
and $\tilde{c}_I^{I_1I_2}=c_{I_1I_2}^I$ if $I_2\neq I$ while $\tilde{c}_I^{I_1I_2}=0$ if $I_2= I$. Then, we can write \eqref{LTA}  as
\begin{align}\label{LTA'}
\Pdv(g^{ba}\Lie_T^I(g_{ac}\A W^c))=\A W_I^b+\tilde{c}_I^{I_1I_2} \A_{I_1}W_{I_2}^b.
\end{align}
 Thus, we are able to rewrite \eqref{Lieeq1} as
\begin{align}\label{Lieeq2}
L_1W_I^d=&\ddot{W}_I^d-(\Lie_B^2W)_I^d+\A  W_I^d+\dot{\mc{G}}\dot{W}_I^d-\mc{C}\dot{W}_I^d+\mc{X}(\Lie_BW)_I^d\\
=&F_I^d-\tilde{c}_I^{I_1I_2}(\A_{I_1}W_{I_2}^d+\dot{\mc{G}}_{I_1}\dot{W}_{I_2}^d-\mc{C}_{I_1}\dot{W}_{I_2}^d+\mc{G}_{I_1}\ddot{W}_{I_2}^d-\mc{G}_{I_1}F_{I_2}^d)\no\\
&+ 2\tilde{c}_I^{I_1I_2} g^{ad}(\delta_{il}\D_a x^l\Lie_B\D_c x^i)_{I_1}  (\Lie_BW)^c_{I_2}.\no
\end{align}

Now, we define higher-order energies. For $I\in\mc{V}$ with $|I|=r\gs 2$,  let
\begin{align*}
E_I=&E(W_I)=\langle \dot{W}_I, \dot{W}_I\rangle +\langle W_I,(\A +I)W_I\rangle+\langle \Lie_B W_I,\Lie_B W_I\rangle.
\end{align*}

For $\mc{V}\in\{\{D_t\}, \{B\}, \mc{B}, \mc{S}, \mc{T}, \mc{R}, \mc{U}\}$ where $\mc{B}=\{B, D_t\}$, let
\begin{align}
\abs{W}_s^{\mc{V}}=&\sum_{|I|\ls s,\, T\in\mc{V}}\abs{\Lie_T^IW}, \; \abs{W}_{s,B}^{\mc{V}}=\sum_{|I|\ls s,\, T\in\mc{V}}\abs{\Lie_B\Lie_T^IW},\\
\hnorm{\D q}_{s,\infty,P^{-1}}^{\mc{V}}=&\!\!\!\!\sum_{|I|=s, T\in\mc{V}}\!\lnorm{\frac{\nb_N \Lie_T^Iq}{\nb_NP}}_{L^\infty(\Omega)},\; \norm{\D q}_{s,\infty,P^{-1}}^{\mc{V}}=\sum_{0\ls l\ls s}\hnorm{\D q}_{l,\infty,P^{-1}}^{\mc{V}}, \label{Dq}\\
\norm{f}_{s,\infty}^{\mc{V}}=&\!\!\!\!\sum_{|I|\ls s,\, T\in\mc{V}}\norm{\Lie_T^If}_{L^\infty(\Omega)},\;F_s^{\mc{V}}=\sum_{|I|\ls s,\,  T\in\mc{V}} \norm{F_I}, \label{EsV}\\
\mathring{E}_s^{\mc{V}}=&\sum_{|I|=s,\, T\in\mc{V}}\sqrt{E_I},\quad 
E_s^{\mc{V}}=\sum_{0\ls l\ls s}\mathring{E}_l^{\mc{V}}.
\end{align}

\subsection{The higher-order energy estimates for time  and $\Lie_B$ derivatives} \label{sec.tang.r}

From \eqref{Lieeq2}, we have for $I\in \mc{B}$,
\begin{align}
\dot{E}_I=&2\langle \dot{W}_I, \ddot{W}_I+\A  W_I\rangle+\langle \dot{W}_I, \dot{\mc{G}}\dot{W}_I\rangle +2\langle \dot{W}_I, W_I\rangle+ \langle \dot{\mc{G}}W_I, (\A +I)W_I\rangle\no\\
&+\langle W_I, \dot{\A }W_I\rangle +\langle \dot{\mc{G}}\Lie_B W_I,\Lie_B W_I\rangle+2\langle \Lie_B \dot{W}_I,\Lie_B W_I\rangle\no\\
=&2\langle \dot{W}_I,F_I\rangle-\langle \dot{\mc{G}}\dot{W}_I,\dot{W}_I \rangle+2\langle \dot{W}_I, W_I\rangle+ \langle \dot{\mc{G}}W_I, (\A +I)W_I\rangle\no\\
&+\langle W_I, \dot{\A }W_I\rangle +\langle \dot{\mc{G}}\Lie_B W_I,\Lie_B W_I\rangle\label{EI.1}\\
&-2\tilde{c}_I^{I_1I_2}(\langle \dot{W}_I,\A_{I_1}W_{I_2}\rangle+\langle \dot{W}_I,\dot{\mc{G}}_{I_1}\dot{W}_{I_2}\rangle-\langle \dot{W}_I,\mc{C}_{I_1}\dot{W}_{I_2}\rangle\no\\
&+\langle \dot{W}_I,\mc{G}_{I_1}\ddot{W}_{I_2}\rangle-\langle \dot{W}_I,\mc{G}_{I_1}F_{I_2}\rangle)\label{EI.2}\\
&+ 4\tilde{c}_I^{I_1I_2} \langle \dot{W}_I^a, (\delta_{il}\D_a x^l\Lie_B\D_c x^i)_{I_1}  (\Lie_BW)^c_{I_2}\rangle\label{EI.3}\\
&+2\langle \Lie_B \dot{W}_I,\Lie_B W_I\rangle+2\langle \dot{W}_I,(\Lie_B^2W)_I \rangle-2\langle \dot{W}_I,\mc{X}(\Lie_BW)_I\rangle.\label{EI.4}
\end{align}

It is clear that
\begin{align*}
\abs{\eqref{EI.1}}\ls 2E_I^{1/2}\norm{F_I}+\left(1+\norm{\dot{g}}_{L^\infty(\Omega)}+\lnorm{\frac{\nb_N \dot{P}}{\nb_NP}}_{L^\infty(\Omega)}\right)E_I,
\end{align*}
and
\begin{align*}
\abs{\eqref{EI.3}}\ls 4\tilde{c}_I^{I_1I_2}\norm{(\delta_{il}\D_a x^l\Lie_B\D_c x^i)_{I_1}}_{L^\infty(\Omega)}E_{I_2}^{1/2}E_I^{1/2}.
\end{align*}

To deal with the term $\langle \dot{W}_I,\A_{I_1}W_{I_2}\rangle$, we introduce
\begin{align*}
D_I=2\tilde{c}_I^{I_1I_2}\langle W_I,\A_{I_1}W_{I_2}\rangle,
\end{align*}
then
\begin{align*}
\dot{D}_I=2\tilde{c}_I^{I_1I_2}(\langle \dot{W}_I,\A_{I_1}W_{I_2}\rangle+\langle W_I,\A_{I_1}\dot{W}_{I_2}\rangle+\langle W_I,\dot{\A}_{I_1}W_{I_2}\rangle).
\end{align*}
Thus,
\begin{align*}
&\abs{\dot{D}_I+\eqref{EI.2}}\no\\
\ls& 2\tilde{c}_I^{I_1I_2}\left(\lnorm{\frac{\nb_N P_{I_1}}{\nb_NP}}_{L^\infty(\Omega)}+\lnorm{\frac{\nb_N \dot{P}_{I_1}}{\nb_NP}}_{L^\infty(\Omega)}\!\!\!\!\!\!+\norm{\dot{g}^{I_1}}_{L^\infty(\Omega)}+\norm{\omega^{I_1}}_{L^\infty(\Omega)}\right)E_{I_2}^{1/2}E_I^{1/2}\no\\
&+2\tilde{c}_I^{I_1I_2}\norm{{g}^{I_1}}_{L^\infty(\Omega)}(\norm{\ddot{W}_{I_2}}+\norm{F_{I_2}})E_I^{1/2},
\end{align*}
where the term $\norm{\ddot{W}_{I_2}}$ can be controlled by the  energy norm taking one  $T=D_t$.

Since $B\cdot N=0$ on $\Gamma$, we get by  \eqref{Lie.2form},
\begin{align*}
\eqref{EI.4}=&2\int_{\Omega} g_{ab}\Lie_B (\dot{W}_I^a\Lie_B W_I^b)dy+4\int_{\Omega}\delta_{il}\D_a x^l\Lie_B\D_bx^i \dot{W}_I^a(\Lie_BW^b)_Idy\\
=&-2\langle (\Lie_Bg_{ab})\dot{W}_I^a,\Lie_B W_I^b\rangle+4\langle\delta_{il}\D_a x^l\Lie_B\D_bx^i \dot{W}_I^a,\Lie_BW_I^b\rangle\\
=&2\langle \delta_{il}(\D_a x^l\Lie_B\D_bx^i-\Lie_B\D_a x^l\D_bx^i) \dot{W}_I^a,\Lie_BW_I^b\rangle.
\end{align*}
Then, 
\begin{align*}
\abs{\eqref{EI.4}}\ls 4\norm{\D x}_{L^\infty(\Omega)}\norm{\Lie_B\D x}_{L^\infty(\Omega)} E_I.
\end{align*}
Therefore,
\begin{align}\label{EDI.1}
&\dot{E}_I+\dot{D}_I\\
\ls &2E_I^{1/2}(\norm{F_I}+\tilde{c}_I^{I_1I_2}\norm{{g}^{I_1}}_{L^\infty(\Omega)}(\norm{\ddot{W}_{I_2}}+\norm{F_{I_2}}))\no\\
&+\left(1+\norm{\dot{g}}_{L^\infty(\Omega)}+\lnorm{\frac{\nb_N \dot{P}}{\nb_NP}}_{L^\infty(\Omega)}+4\norm{\delta_{il}\D x^l\Lie_B\D x^i}_{L^\infty(\Omega)}\right)E_I\no\\
&+2\tilde{c}_I^{I_1I_2}\left(\lnorm{\frac{\nb_N P_{I_1}}{\nb_NP}}_{L^\infty(\Omega)}+\lnorm{\frac{\nb_N \dot{P}_{I_1}}{\nb_NP}}_{L^\infty(\Omega)}+\norm{\dot{g}^{I_1}}_{L^\infty(\Omega)}+\norm{\omega^{I_1}}_{L^\infty(\Omega)}\right)\no\\
&\cdot E_{I_2}^{1/2}E_I^{1/2}+4\tilde{c}_I^{I_1I_2}\norm{(\delta_{il}\D x^l\Lie_B\D x^i)_{I_1}}_{L^\infty(\Omega)}E_{I_2}^{1/2}E_I^{1/2}.\no
\end{align}
Noticing that $E_I(0)=D_I(0)=0$, the integration over $[0,t]$ in time implies
\begin{align}\label{EI}
E_I\ls &|D_I|+\int_0^t\eqref{EDI.1} d\tau\\
\ls &2E_I\sum_{s=0}^{r-1}\bic_r^s \hnorm{\D P}_{r-s,\infty,P^{-1}}\mathring{E}_s^{\mc{B}}+\int_0^t\eqref{EDI.1} d\tau.\no
\end{align}
Let
\begin{align*}
n_r^{\mc{B}}=&1+(2r-1)\norm{\dot{g}}_{L^\infty(\Omega)}+2(r-1)\norm{\Lie_B{g}}_{L^\infty(\Omega)}+\lnorm{\frac{\nb_N \dot{P}}{\nb_NP}}_{L^\infty(\Omega)}\\
&+ 4\norm{\delta_{il}\D_a x^l\Lie_B\D_bx^i}_{L^\infty(\Omega)},\\
\tilde{n}_r^{\mc{B}}=&\int_0^t(\norm{\D P}_{r,\infty,P^{-1}}+\norm{\D \dot{P}}_{r,\infty,P^{-1}}+\norm{\dot{g}}_{r,\infty}^{\mc{B}}+\norm{\omega}_{r,\infty}^{\mc{B}}+\norm{g}_{r,\infty}^{\mc{B}}\\
&\qquad+\norm{\delta_{il}\D_a x^l\Lie_B\D_c x^i}_{r,\infty}^{\mc{B}})
d\tau,\\
f_r^{\mc{B}}=&\int_0^t (1+\norm{g}_{r,\infty}^{\mc{B}})F_r^{\mc{B}}d\tau.
\end{align*}
Taking the supremum on $[0,t]$ in time of \eqref{EI}, then summing up the order from $0$ to $r$, and dividing by $\bar{E}_r^{\mc{B}}=\sup_{[0,t]}E_r^{\mc{B}}$, we get 
\begin{align*}
\bar{E}_r^{\mc{B}}
\ls &C (\sup_{[0,t]}\norm{\D P}_{r-1,\infty,P^{-1}}+\tilde{n}_r^{\mc{B}})\bar{E}_{r-1}^{\mc{B}}+Cf_r^{\mc{B}}+\int_0^t n_r^{\mc{B}}\bar{E}_r^{\mc{B}} d\tau.
\end{align*}
By the Gronwall inequality, we can obtain the following estimates.
\begin{proposition}\label{prop.Er.recur}
	Let 
\begin{align*}
M_r^{\mc{B}}=C \Big[(\sup_{[0,t]}\norm{\D P}_{r-1,\infty,P^{-1}}+\tilde{n}_r^{\mc{B}})\bar{E}_{r-1}^{\mc{B}}+f_r^{\mc{B}}\Big],
\end{align*}
it holds	
\begin{align*}
\bar{E}_r^{\mc{B}}(t)\ls &M_r^{\mc{B}}(t) +\int_0^t M_r^{\mc{B}}(s)n_r^{\mc{B}}(s)   \exp\left(\int_s^tn_r^{\mc{B}}(\tau)d\tau\right) ds.
\end{align*}
\end{proposition}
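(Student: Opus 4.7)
The plan is to implement the energy argument already initiated in the paragraphs preceding the statement and close it via a Gronwall loop on the total-order energy $\bar{E}_r^{\mc{B}}$. First, I would start from the commuted equation \eqref{Lieeq2} applied to a multi-index $I$ with $|I|=r$ and $T\in\mc{B}=\{B,D_t\}$, pair it with $\dot{W}_I$ in the $\langle\cdot,\cdot\rangle$ inner product, and use the symmetry of $\A$, the antisymmetry of $\omega$, and the product rule $D_t g_{ab}=\dot g_{ab}$ exactly as was done for the lowest-order energy in Proposition \ref{prop.E0} and for $E_1$ in Proposition \ref{prop.E1}. This produces the decomposition of $\dot{E}_I$ into the four groups \eqref{EI.1}--\eqref{EI.4}, each of which has already been sketched above the statement.

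Next, the delicate point is the cross term $\tilde{c}_I^{I_1I_2}\langle \dot{W}_I,\A_{I_1}W_{I_2}\rangle$ in \eqref{EI.2}: by \eqref{est.A} applied with $f=1$ and $P$ replaced by $P_{I_1}$, the operator $\A_{I_1}$ is controlled by $\|\nb_N P_{I_1}/\nb_N P\|_{L^\infty(\Gamma)}$ times the square root of the $\A$-energy, but pairing it with $\dot{W}_I$ would cost one more derivative than the energy provides. I would handle this exactly as in the treatment of $E_1$: introduce the auxiliary quantity
\begin{align*}
D_I=2\tilde{c}_I^{I_1I_2}\langle W_I,\A_{I_1}W_{I_2}\rangle,
\end{align*}
differentiate in time, and observe that $\dot{D}_I+\eqref{EI.2}$ contains only $\A_{I_1}\dot{W}_{I_2}$ and $\dot{\A}_{I_1}W_{I_2}$, both of which can be bounded by a product of lower-order energies $E_{I_2}^{1/2}$ and $E_I^{1/2}$ via \eqref{est.A}. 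The term $\|\ddot{W}_{I_2}\|$ that appears in \eqref{EI.2} when $\mc{G}_{I_1}\ddot{W}_{I_2}$ is paired with $\dot W_I$ is already of order $|I_2|+2 - 2 = |I_2|$ derivatives of $W$ once one of the $D_t$'s hidden in $\ddot{W}$ is counted as an element of $\mc{B}$, so it sits inside $\mathring{E}_{|I_2|+1}^{\mc{B}}$ and contributes only to the lower-order term $\bar E_{r-1}^{\mc{B}}$. The term \eqref{EI.4} can be rewritten using \eqref{Lie.2form} and $B\cdot N|_\Gamma=0$ so as to expose only first derivatives of $x$ and $\Lie_B\D x$ in $L^\infty$, which gives the coefficient $4\|\delta_{il}\D x^l\Lie_B\D x^i\|_{L^\infty(\Omega)}$ appearing in $n_r^{\mc{B}}$.

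Collecting these bounds produces \eqref{EDI.1}. Since the initial data have been rendered homogeneous in Section \ref{sec.homodata}, we have $E_I(0)=D_I(0)=0$, so integration in $t$ yields \eqref{EI}. Bounding $|D_I|$ itself by the Cauchy--Schwarz/\eqref{est.A} inequality gives
\begin{align*}
|D_I|\ls 2 \sum_{s=0}^{r-1}\bic_r^s \hnorm{\D P}_{r-s,\infty,P^{-1}}\mathring{E}_s^{\mc{B}}\cdot (E_I)^{1/2},
\end{align*}
which after an application of Young's inequality absorbs an $E_I$ on the right into the left and leaves a contribution $\sup_{[0,t]}\|\D P\|_{r-1,\infty,P^{-1}}\bar E_{r-1}^{\mc{B}}$, accounting for the first piece of $M_r^{\mc{B}}$. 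After this step I would take the supremum in $t$, sum the orders from $0$ to $r$ to form $\bar{E}_r^{\mc{B}}$, divide out a common factor, and identify the coefficients with $n_r^{\mc{B}}$, $\tilde n_r^{\mc{B}}$ and $f_r^{\mc{B}}$ as defined right before the statement. This produces an inequality of the shape $\bar E_r^{\mc{B}}(t)\ls M_r^{\mc{B}}(t)+\int_0^t n_r^{\mc{B}}(s)\bar E_r^{\mc{B}}(s)\,ds$, to which the Gronwall inequality furnishes exactly the claimed bound.

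The main obstacle is the one already anticipated in the $E_1$ case: the $\A_{I_1}W_{I_2}$ coupling would naively cost an extra derivative, and only the integration-by-parts-in-time trick via $D_I$ closes the estimate. All remaining terms are bounded routinely by the Cauchy--Schwarz inequality together with the $L^\infty$ norms packaged into $n_r^{\mc{B}}$ and $\tilde n_r^{\mc{B}}$, so the structural step is precisely this symmetrization of the $\A$-term.
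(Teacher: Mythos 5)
Your proposal reproduces the paper's argument essentially verbatim: the same pairing with $\dot W_I$ after commuting $\Lie_T^I$ through \eqref{Lieeq2}, the same symmetrizer $D_I=2\tilde{c}_I^{I_1I_2}\langle W_I,\A_{I_1}W_{I_2}\rangle$ to trade the derivative-expensive pairing $\langle\dot W_I,\A_{I_1}W_{I_2}\rangle$ for lower-order terms, the same Young absorption of $|D_I|$ and Gronwall closure. One small bookkeeping correction: when $|I_2|=r-1$ the factor $\norm{\ddot W_{I_2}}$ lives in $\mathring{E}_{r}^{\mc{B}}$, not $\bar E_{r-1}^{\mc{B}}$, and is absorbed into the Gronwall integral $\int_0^t n_r^{\mc{B}}\bar E_r^{\mc{B}}\,ds$ (this is why $n_r^{\mc{B}}$ carries the prefactors $(2r-1)$ and $2(r-1)$ on $\norm{\dot g}_{L^\infty}$ and $\norm{\Lie_B g}_{L^\infty}$) rather than into the lower-order term $M_r^{\mc{B}}$.
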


This is a recursion formula between $\bar{E}_r^{\mc{B}}$ and $\bar{E}_{r-1}^{\mc{B}}$, thus we can obtain inductively the estimates of $\bar{E}_r^{\mc{B}}$ and $E_r^{\mc{B}}$  since we have proved the estimates of $\bar{E}_1^{\mc{B}}=\sup_{[0,t]}(E_0+E_1+E_1^{B})$ in Propositions \ref{prop.E0}, \ref{prop.E1} and \ref{prop.E1S}. Indeed, we have the following:

\begin{proposition}\label{prop.ErT}
	Assume that $x,\,P\in C^{r+2}([0,T]\times\Omega)$, $B\in C^{r+2}(\Omega)$,  $P|_{\Gamma}=0$, $\nb_NP|_\Gamma\ls -c_0<0$, 
	$B^aN_a|_\Gamma = 0$ and $\dv V=0$, where $V=D_t x$. Suppose that $W$ is a solution of \eqref{1formeq} where $F$ is divergence-free and vanishing to order $r$ as $t\to 0$.  Then, there is a constant $C=C(x,P,B)$ depending only on the norm of $(x,P,B)$, a lower bound for $c_0$, and an upper bound for $T$ such that if $E_s^{\mc{B}}(0)=0$ for $s\ls r$, then
	\begin{align}\label{ErT}
	E_r^{\mc{B}}(t)\ls C\int_0^t\norm{F}_r^{\mc{B}}d\tau, \quad \text{ for } t\in [0,T].
	\end{align}
\end{proposition}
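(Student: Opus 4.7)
I will proceed by induction on $r\geqslant 0$, using the recursion estimate of Proposition \ref{prop.Er.recur} for the inductive step, and Propositions \ref{prop.E0}, \ref{prop.E1}, \ref{prop.E1S'} for the base case. The smoothness hypotheses $x,P\in C^{r+2}$ and $B\in C^{r+2}$, together with the bound $\nabla_N P|_\Gamma\leqslant-c_0<0$, ensure that every coefficient-type quantity appearing in the functions $n_k^{\mc{B}}, \tilde n_k^{\mc{B}}$, and in $(1+\norm{g}_{k,\infty}^{\mc{B}})$, for $k\leqslant r$, is bounded by a single constant $C=C(x,P,B)$ uniformly on $[0,T]$. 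In particular, all exponential factors $\exp(\int_0^t n_k^{\mc{B}}\,d\tau)$ are bounded by such a $C$, and the Poisson bracket-style terms $\norm{\D P}_{r-1,\infty,P^{-1}}$ are finite by the Taylor sign condition. This uniformity is what will let me absorb all "coefficient" factors into a single constant at each step.

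\textbf{Base case} ($r=0,1$). With $W_0=W_1=0$ we have $E_0(0)=0$, and since $F$ vanishes to order $r$ as $t\to 0$, also $F(0)=0$, $\dot F(0)=0$, $\Lie_B F(0)=0$. Proposition \ref{prop.E0} then yields $E_0(t)\leqslant C\int_0^t\norm{F}\,d\tau$. For $E_1$, the quantity $M_1$ of Proposition \ref{prop.E1} simplifies: the term $4\bar n_1^2 E_0^2$ is absorbed by the inductive estimate on $E_0$, and the time integral of $(\tilde n_1 E_0+f_1)^2$ is dominated by $C\bigl(\int_0^t\norm{F}_1^{\mc B}\,d\tau\bigr)^2$ after using $\sqrt{\cdot}$ and Cauchy--Schwarz, giving $E_1(t)\leqslant C\int_0^t\norm{F}_1^{\mc B}\,d\tau$. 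The same argument applied to Proposition \ref{prop.E1S'} yields $E_1^B(t)\leqslant C\int_0^t\norm{F}_1^{\mc B}\,d\tau$. Summing the three gives $\bar E_1^{\mc B}(t)\leqslant C\int_0^t\norm{F}_1^{\mc B}\,d\tau$.

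\textbf{Inductive step} ($r\geqslant 2$). Assume $\bar E_{r-1}^{\mc B}(t)\leqslant C_{r-1}\int_0^t\norm{F}_{r-1}^{\mc B}\,d\tau$. Using the uniform coefficient bounds, the quantity $M_r^{\mc B}$ in Proposition \ref{prop.Er.recur} satisfies
\begin{align*}
M_r^{\mc B}(t)
&\leqslant C\bigl[\bigl(\sup_{[0,t]}\norm{\D P}_{r-1,\infty,P^{-1}}+\tilde n_r^{\mc B}\bigr)\bar E_{r-1}^{\mc B}(t)+f_r^{\mc B}(t)\bigr]\\
&\leqslant C'\int_0^t\norm{F}_{r-1}^{\mc B}\,d\tau+C'\int_0^t(1+\norm{g}_{r,\infty}^{\mc B})F_r^{\mc B}\,d\tau
\;\leqslant\; C''\int_0^t\norm{F}_r^{\mc B}\,d\tau,
\end{align*}
since $\norm{F}_{r-1}^{\mc B}\leqslant\norm{F}_r^{\mc B}$. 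The recursion formula
\[
\bar E_r^{\mc B}(t)\leqslant M_r^{\mc B}(t)+\int_0^t M_r^{\mc B}(s)n_r^{\mc B}(s)\exp\!\Big(\!\int_s^t n_r^{\mc B}(\tau)\,d\tau\Big)\,ds
\]
together with the uniform bound on $n_r^{\mc B}$ then gives $\bar E_r^{\mc B}(t)\leqslant C\int_0^t\norm{F}_r^{\mc B}\,d\tau$, closing the induction.

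\textbf{Expected main obstacle.} The delicate point is bookkeeping in the base case: Propositions \ref{prop.E1} and \ref{prop.E1S'} produce quadratic-in-energy objects ($M_1^2$-type, $E_0 E_1$ cross terms, $\bar n_1^B E_0$, etc.) rather than linear ones, so one must carefully use the already-established bound $E_0(t)\leqslant C\int_0^t\norm{F}\,d\tau$ inside these expressions (and in the initial correction $D_{D_t}$ that was packaged into the energy) and then extract a square root to obtain a clean linear bound. A secondary subtlety is that the term $\tilde n_r^{\mc B}$ contains $\norm{\D\dot P}_{r,\infty,P^{-1}}$ and similar pressure-derivative norms: one must check that the hypothesis $P\in C^{r+2}$ combined with $\nabla_N P|_\Gamma\leqslant -c_0<0$ keeps these finite uniformly on $[0,T]$, which is where the lower bound on $c_0$ enters the final constant $C$.
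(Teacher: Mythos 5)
Your overall strategy matches the paper's: establish the $r\leqslant 1$ case from Propositions \ref{prop.E0}, \ref{prop.E1}, \ref{prop.E1S'} and then close the induction with the recursion of Proposition \ref{prop.Er.recur}, using the $C^{r+2}$ smoothness of $(x,P,B)$ and the Taylor-sign bound to make all coefficient functions uniformly bounded on $[0,T]$. The inductive step as you write it is fine: $M_r^{\mc{B}}$ is linear in $\bar E_{r-1}^{\mc{B}}$ and in $\int_0^t\norm{F}_r^{\mc{B}}\,d\tau$, so Proposition \ref{prop.Er.recur} propagates the bound.

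There is, however, a genuine gap in your base case for $E_1$. You assert that $\int_0^t(\tilde n_1 E_0+f_1)^2\,d\tau \leqslant C\big(\int_0^t\norm{F}_1^{\mc{B}}\,d\tau\big)^2$ ``after using $\sqrt{\cdot}$ and Cauchy--Schwarz.'' This inequality is false in general: since $f_1\lesssim \norm{F}_1^{\mc{B}}$, you would need $\int_0^t g^2\,d\tau\leqslant C\big(\int_0^t g\,d\tau\big)^2$ for the nonnegative function $g=\norm{F}_1^{\mc{B}}$, and Cauchy--Schwarz gives $\big(\int_0^t g\big)^2\leqslant t\int_0^t g^2$, i.e.\ the reverse direction. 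Consequently, taking a square root of $M_1(t)$ in Proposition \ref{prop.E1} only produces a bound of the form $E_1\lesssim \big(\int_0^t (\norm{F}_1^{\mc{B}})^2\,d\tau\big)^{1/2}+\int_0^t\norm{F}\,d\tau$, which is not dominated by $\int_0^t\norm{F}_1^{\mc{B}}\,d\tau$ for all admissible $F$. The stated form of Proposition \ref{prop.E1} is quadratic precisely because its proof used $ab\leqslant\frac{1}{2}a^2+\frac12 b^2$ to absorb $E_1$, and that step loses the $L^1$-in-time structure.

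The fix is to go back to the differential inequality inside the proof of Proposition \ref{prop.E1},
\[
\abs{\tfrac{d}{dt}\big(E_{D_t}+D_{D_t}\big)}\ \leqslant\ 2n_1\,\abs{E_{D_t}+D_{D_t}}+2\big(\tilde n_1 E_0+f_1\big)\,E_1,
\]
Gronwall it to get $\abs{E_{D_t}+D_{D_t}}(t)\leqslant C\,\bar E_1(t)\int_0^t(\tilde n_1 E_0+f_1)\,d\tau$ with $\bar E_1(t)=\sup_{[0,t]}E_1$, add the bound $\abs{D_{D_t}}\leqslant 2\bar n_1 E_0 E_1$, take the supremum over $[0,t]$, and divide by $\bar E_1(t)$ --- exactly the ``sup-and-divide'' device the paper itself uses in the proofs of Proposition \ref{prop.E1S'} and Proposition \ref{prop.Er.recur}. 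This gives the genuinely linear bound $\bar E_1(t)\leqslant C\big(\sup_{[0,t]}E_0+\int_0^t(\tilde n_1 E_0+f_1)\,d\tau\big)\leqslant C\int_0^t\norm{F}_1^{\mc{B}}\,d\tau$, which is what the induction needs. (The paper is equally terse at this point, simply citing the earlier propositions; but the specific Cauchy--Schwarz patch you propose does not close the argument, whereas the sup-and-divide route does.)
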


\subsection{Estimates for the tangential derivatives}

We have got the higher-order time and $\Lie_B$ derivatives that are some kinds of tangential derivatives due to $\dv B=0$ and $B\cdot N|_\Gamma=0$, but they do not give the estimates for all tangential derivatives. Thus, we need to derive the estimates for tangential derivatives $\sum_{T\in\mc{T}} |\Lie_T W|$ of $W$.

Let $T\in\mc{T}$, $W_T=\Lie_TW$, $F_T=\Lie_T F$, and similar notation as in \eqref{notation}:
\begin{align*}
\begin{aligned}
\A_T=\A_{TP},\quad \mc{G}_T=\mc{M}_{g^T}, \quad  g_{ab}^T=\Lie_T g_{ab},\\
\mc{\dot{G}}_T =\mc{M}_{\dot{g}^T},\quad \mc{C}_T=\mc{M}_{\omega^T},\quad \omega_{ab}^T=\Lie_T \omega_{ab}.
\end{aligned}
\end{align*}
Then, from \eqref{Lieeq2}, it follows that
\begin{align*}
L_1W_T^d=&\ddot{W}_T^d-\Lie_T\Lie_B^2W^d+\A  W_T^d+\dot{\mc{G}}\dot{W}_T^d-\mc{C}\dot{W}_T^d+\mc{X}\Lie_T\Lie_BW^d\no\\
=&F_T^d-(\A_{T}W^d+\dot{\mc{G}}_{T}\dot{W}^d-\mc{C}_{T}\dot{W}^d+\mc{G}_{T}\ddot{W}^d-\mc{G}_{T}F^d)\no\\
&+ 2 g^{ad}(\delta_{il}\D_a x^l\Lie_B\D_c x^i)_{T}  \Lie_BW^c.
\end{align*}

As the arguments in the lowest-order energies, we define 
\begin{align*}
E_T=E(W_T)=\langle \dot{W}_T, \dot{W}_T\rangle +\langle W_T,(\A +I)W_T\rangle+\langle \Lie_BW_T,\Lie_BW_T\rangle.
\end{align*}

From \eqref{DtBW}, \eqref{Lie.commut} and $B\cdot N|_\Gamma=0$, we get
\begin{align*}
&-\langle \Lie_T\Lie_B^2W, \dot{W}_T\rangle=-\int_{\Omega} g_{ad}\Lie_T\Lie_B^2W^d\dot{W}_T^a dy\\
=&-\int_{\Omega} g_{ad}[\Lie_T,\Lie_B]\Lie_BW^d\dot{W}_T^a dy-\int_{\Omega} g_{ad}\Lie_B[\Lie_T,\Lie_B]W^d\dot{W}_T^a dy\\ &-\int_{\Omega} g_{ad}\Lie_B^2W_T^d\dot{W}_T^a dy\\
=&-\int_{\Omega} g_{ad}\Lie_{[T,B]}\Lie_BW^d\dot{W}_T^a dy-\int_{\Omega} g_{ad}\Lie_B\Lie_{[T,B]}W^d\dot{W}_T^a dy\\
&+\frac{1}{2}\frac{d}{dt}\int_{\Omega}|\Lie_BW_T|^2dy-\frac{1}{2}\int_{\Omega} \dot{g}_{ab}\Lie_BW_T^a\Lie_BW_T^bdy+\int_{\Omega} (\Lie_Bg_{ab})\Lie_BW_T^a\dot{W}_T^b dy.
\end{align*}
One has
\begin{align*}
\langle \mc{X}\Lie_T\Lie_BW,\dot{W}_T\rangle
=&-2\int_{\Omega} \delta_{il}\D_a x^l\Lie_B\D_cx^i\Lie_T\Lie_BW^c\dot{W}^a_T dy\no\\
=&-2\int_{\Omega} \delta_{il}\D_a x^l\Lie_B\D_cx^i\Lie_{[T,B]}W^c\dot{W}^a_T dy\\
&-2\int_{\Omega} \delta_{il}\D_a x^l\Lie_B\D_cx^i\Lie_BW^c_T\dot{W}^a_T dy.
\end{align*}
Thus, by \eqref{Lie.2form}, we get
\begin{align*}
&-\langle \Lie_T\Lie_B^2W, \dot{W}_T\rangle+\langle \mc{X}\Lie_T\Lie_BW,\dot{W}_T\rangle\no\\
=&-2\int_{\Omega} g_{ad}\Lie_{[T,B]}\Lie_BW^d\dot{W}_T^a dy+\int_{\Omega} g_{ad}\Lie_{[[T,B],B]}W^d\dot{W}_T^a dy+\frac{1}{2}\frac{d}{dt}\int_{\Omega}|\Lie_BW_T|^2dy\\
&-\frac{1}{2}\int_{\Omega} \dot{g}_{ab}\Lie_BW_T^a\Lie_BW_T^bdy-2\int_{\Omega} \delta_{il}\D_a x^l\Lie_B\D_cx^i\Lie_{[T,B]}W^c\dot{W}^a_T dy\\
&+\int_{\Omega} \delta_{il}(\Lie_B\D_a x^l\D_cx^i-\D_a x^l\Lie_B\D_cx^i)\Lie_BW^c_T\dot{W}^a_T dy.
\end{align*}
From the antisymmetry of $\dot{\omega}$, one has
\begin{subequations}\label{ET}
	\begin{align}
	\dot{E}_T=&2\langle \ddot{W}_T+\A W_T+\dot{\mc{G}}\dot{W}_T, \dot{W}_T\rangle+2\langle W_T,\dot{W}_T\rangle +D_t\langle \Lie_B {W}_T,\Lie_B {W}_T\rangle\no\\
	&-\langle \dot{\mc{G}}\dot{W}_T, \dot{W}_T\rangle+\langle \dot{\mc{G}}W_T,{W}_T\rangle+\langle W_T,\dot{\A }W_T\rangle+\langle \dot{\mc{G}}W_T,{\A }W_T\rangle\no\\
	=&2\langle L_1W_T, \dot{W}_T\rangle +2\langle \dot{\mc{C}}\dot{W}_T, \dot{W}_T\rangle+2\langle \Lie_T \Lie_B^2W,\dot{W}_T\rangle-2\langle \mc{X}\Lie_T\Lie_BW, \dot{W}_T\rangle\no\\
	&+2\langle W_T,\dot{W}_T\rangle +D_t\langle \Lie_B {W}_T,\Lie_B {W}_T\rangle\no\\
	&-\langle \dot{\mc{G}}\dot{W}_T, \dot{W}_T\rangle+\langle \dot{\mc{G}}W_T,{W}_T\rangle+\langle W_T,\dot{\A }W_T\rangle+\langle \dot{\mc{G}}W_T,{\A }W_T\rangle\no\\
	=&2\langle F_T+\mc{G}_TF, \dot{W}_T\rangle-2\langle\dot{\mc{G}}_T\dot{W}, \dot{W}_T\rangle+2\langle\mc{C}_T\dot{W}, \dot{W}_T\rangle-2\langle\mc{G}_T\ddot{W}, \dot{W}_T\rangle\label{ET.1}\\ 
	&-4\langle (\delta_{il}\D_a x^l\Lie_B\D_c x^i)_{T}\Lie_BW^c, \dot{W}_T^a\rangle\no\\
	&-2\int_{\Omega} \delta_{il}(\Lie_B\D_a x^l\D_cx^i-\D_a x^l\Lie_B\D_cx^i)\Lie_BW^c_T\dot{W}^a_T dy\label{ET.2}\\
	&+4\int_{\Omega} \delta_{il}\D_a x^l\Lie_B\D_cx^i\Lie_{[T,B]}W^c\dot{W}^a_T dy\label{ET.3}\\
	&+4\int_{\Omega} g_{ad}\Lie_{[T,B]}\Lie_BW^d\dot{W}_T^a dy\label{ET.3'}\\
	&-2\int_{\Omega} g_{ad}\Lie_{[[T,B],B]}W^d\dot{W}_T^a dy\label{ET.4}
	\\
	&-\langle \dot{\mc{G}}\dot{W}_T, \dot{W}_T\rangle+\langle \dot{\mc{G}}W_T,{W}_T\rangle+\langle W_T,\dot{\A }W_T\rangle+\langle \dot{\mc{G}}W_T,{\A }W_T\rangle\label{ET.5}\\
	&+\int_{\Omega} \dot{g}_{ab}\Lie_BW_T^a\Lie_BW_T^bdy+2\langle W_T,\dot{W}_T\rangle\label{ET.5'}\\
	&-2\langle\A_TW, \dot{W}_T\rangle.\label{ET.6}
	\end{align}
\end{subequations}

Now, we control the term $\langle\A_TW, \dot{W}_T\rangle$. As the same argument as in the estimates of $E_1(t)$,  we have to deal with it in an indirect way, by including it in the energies.
Let
\begin{align*}
D_T=2\langle\A_TW, {W}_T\rangle,
\end{align*}
then 
\begin{align*}
\dot{D}_T=&2\langle\dot{\A }_TW, {W}_T\rangle+2\langle\A_T\dot{W}, {W}_T\rangle+2\langle\A_TW, \dot{W}_T\rangle.
\end{align*}

Therefore, we obtain
\begin{align}
\dot{E}_T+\dot{D}_T=&\eqref{ET.1}+\eqref{ET.2}+\eqref{ET.3}+\eqref{ET.3'}+\eqref{ET.4}+\eqref{ET.5}+\eqref{ET.5'}\no\\
&+2\langle\dot{\A }_TW, {W}_T\rangle+2\langle\A_T\dot{W}, {W}_T\rangle. \label{ET.7}
\end{align}

From \eqref{Malpha}, \eqref{op} and \eqref{dotA}, it yields
\begin{align*}
|\eqref{ET.1}|\ls & 2\big(\norm{F_T}+\norm{g^T}_{L^\infty(\Omega)}\norm{F}+\norm{\dot{g}^T}_{L^\infty(\Omega)}E_0\\
& +\norm{\omega^T}_{L^\infty(\Omega)}E_0+\norm{g^T}_{L^\infty(\Omega)}E_1\big)E_T^{1/2},
\end{align*}
\begin{align*}
|\eqref{ET.2}|\ls &4\norm{(\delta_{il}\D x^l\Lie_B\D x^i)_T}_{L^\infty(\Omega)}E_0E_T^{1/2}+2\norm{\delta_{il}\D x^l\Lie_B\D x^i}_{L^\infty(\Omega)}E_T,
\end{align*}
\begin{align*}
|\eqref{ET.5}+\eqref{ET.5'}|\ls \left(1+\norm{\dot{g}}_{L^\infty(\Omega)}+\lnorm{\frac{\nb_N\dot{P}}{\nb_NP}}\right)E_T,
\end{align*}
\begin{align*}
|\eqref{ET.7}|\ls 2\left(\lnorm{\frac{\nb_NT\dot{P}}{\nb_NP}}E_0+\lnorm{\frac{\nb_NTP}{\nb_NP}}E_1\right)E_T^{1/2},
\end{align*}
\begin{align*}
|\eqref{ET.3}|\ls& 4\norm{\delta_{il}\D x^l\Lie_B\D x^i}_{L^\infty(\Omega)}\norm{\Lie_{[T,B]}W}E_T^{1/2},
\end{align*}
\begin{align*}
|\eqref{ET.4}|\ls& 2\norm{g}_{L^\infty(\Omega)}\norm{\Lie_{[[T,B],B]}W}E_T^{1/2},
\end{align*}
and
\begin{align*}
|\eqref{ET.3'}|\ls&4\norm{g}_{L^\infty(\Omega)}\norm{\Lie_{[T,B]}\Lie_BW}E_T^{1/2}. 
\end{align*}

Since $B\in \mc{S}$ and for $T\in\mc{S}$,
\begin{align*}
\dv [T,B]=\D_b(T^a\D_a B^b-B^a\D_aT^b)=\D_bT^a\D_a B^b-\D_bB^a\D_aT^b=0,
\end{align*}
we get $[T,B]\in \mc{S}$. Similarly, $[[T,B],B]\in \mc{S}$. Thus, from the above estimates and observation, we see that the energies should include $E_T$ for any $T\in \mc{T}$ in order to deal with the commutators. 
 Thus, we define the energy as
\begin{align*}
E_1^{T}:= E_T^{1/2} \text{ for } T\in\mc{T}, \quad E_1^{\mc{T}}=\sum_{T\in\mc{T}}E_1^T.
\end{align*}

Let
\begin{align*}
\bar{n}_1^{T}(t)=&\lnorm{\frac{\nb_NTP}{\nb_NP}}_{L^\infty(\Omega)}E_0,\\
n_1(t)=&\frac{1}{2}\left(1+\norm{\dot{g}}_{L^\infty(\Omega)}+3\norm{g}_{L^\infty(\Omega)}+\lnorm{\frac{\nb_N\dot{P}}{\nb_NP}}\right)+2\norm{\delta_{il}\D x^l\Lie_B\D x^i}_{L^\infty(\Omega)}, \\
\tilde{n}_1^{T}(t)=&\norm{\dot{g}^T}_{L^\infty(\Omega)}E_0 +\norm{\omega^T}_{L^\infty(\Omega)}E_0+\norm{g^T}_{L^\infty(\Omega)}E_1\\
&+2\norm{(\delta_{il}\D x^l\Lie_B\D x^i)_T}_{L^\infty(\Omega)}E_0+\lnorm{\frac{\nb_NT\dot{P}}{\nb_NP}}E_0+\lnorm{\frac{\nb_NTP}{\nb_NP}}E_1,\\
f_1^{T}(t)=&\norm{F_T}+\norm{g^T}_{L^\infty(\Omega)}\norm{F},\\
\tilde{n}_1^{\mc{T}}(t)=&2\sum_{T\in\mc{T}}\sup_{[0,t]}\bar{n}_1^T+2\int_0^t\sum_{T\in\mc{T}}\tilde{n}_1^Td\tau,\\
f_1^{\mc{T}}(t)=&2\int_0^t\sum_{T\in\mc{T}}f_1^T d\tau.
\end{align*}
Then, we have the following estimates.

\begin{proposition}\label{prop.E1S}
	It holds
	\begin{align*}
	E_1^{\mc{T}}\ls\tilde{n}_1^{\mc{T}}(t)+f_1^{\mc{T}}(t)+\int_0^t \left(\tilde{n}_1^{\mc{T}}(s)+f_1^{\mc{T}}(s)\right)n_1(s)\exp\left(\int_s^t n_1(\tau)d\tau\right)ds.
	\end{align*}
\end{proposition}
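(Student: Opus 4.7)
The plan is to mimic the strategy of Proposition~\ref{prop.E1S'}: combine the identity \eqref{ET.7} for $\dot E_T+\dot D_T$ with the itemised bounds $|\eqref{ET.1}|,\ldots,|\eqref{ET.7}|$ derived in the excerpt, put every piece in the form $E_1^T\cdot(\text{coefficient})$, integrate in $t$ from homogeneous initial data, sum over $T\in\mathcal{T}$, and close with a Gr\"onwall argument.

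First I would process the commutator terms \eqref{ET.3}, \eqref{ET.3'}, \eqref{ET.4}, which are the reason one is forced to work with the whole family $\mathcal{T}$ at once. The key observation is that $\mathcal{S}$ is closed under Lie brackets and divergence-free tangentiality is preserved by brackets, so $[T,B]$ and $[[T,B],B]$ both lie in $\mathcal{S}\subset\mathcal{T}$; hence $\|\Lie_{[T,B]}W\|\ls E_1^{[T,B]}\ls E_1^{\mathcal{T}}$ and similarly for $\|\Lie_{[[T,B],B]}W\|$. For the mixed term $\Lie_{[T,B]}\Lie_B W$ in \eqref{ET.3'}, I would use the standard identity $[\Lie_A,\Lie_B]=\Lie_{[A,B]}$ to rewrite it as $\Lie_B\Lie_{[T,B]}W+\Lie_{[[T,B],B]}W$, so that its norm is bounded by $E_1^{[T,B]}+E_1^{[[T,B],B]}\ls E_1^{\mathcal{T}}$ (recalling that $\|\Lie_B W_{T'}\|\le E_{T'}^{1/2}=E_1^{T'}$ is built into the definition of $E_{T'}$). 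Assembling this with $|\eqref{ET.1}|$, $|\eqref{ET.2}|$, $|\eqref{ET.5}+\eqref{ET.5'}|$ and $|\eqref{ET.7}|$ collapses the estimate to
\begin{align*}
\dot E_T+\dot D_T\ls 2E_1^T\bigl(f_1^T+\tilde n_1^T+n_1\cdot E_1^{\mathcal{T}}\bigr).
\end{align*}

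Then, since the initial data are zero, $E_T(0)=D_T(0)=0$, and $|D_T|=|2\langle\A_T W,W_T\rangle|\ls 2\bar n_1^T E_1^T$ follows from an application of \eqref{UAW} with $f=TP/P$ (the boundary trace is meaningful because $P|_\Gamma=0$ and $T$ is tangential, so $TP|_\Gamma=0$). Integrating in time, taking the supremum on $[0,t]$, dividing by $\sup E_1^T$, and finally summing over $T\in\mathcal{T}$ produces an integral inequality of the form
\begin{align*}
\bar E_1^{\mathcal{T}}(t)\ls \tilde n_1^{\mathcal{T}}(t)+f_1^{\mathcal{T}}(t)+\int_0^t n_1(\tau)\bar E_1^{\mathcal{T}}(\tau)\,d\tau,
\end{align*}
where $\bar E_1^{\mathcal{T}}:=\sup_{[0,t]}E_1^{\mathcal{T}}$ and combinatorial constants from the finite cardinality of $\mathcal{T}$ are absorbed into the coefficient $n_1$ as defined in the statement. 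The Gr\"onwall inequality in its integral form then gives the asserted bound.

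The main obstacle is the commutator bookkeeping described above: one must verify that every tangential commutator that surfaces when pushing $\Lie_T^I$ past $\Lie_B^2$ lands back in $\mathcal{S}$, so that the price paid by the estimate is subsumed in the \emph{same} quantity $E_1^{\mathcal{T}}$ one is trying to control, producing a genuinely linear Gr\"onwall inequality rather than a loss of derivatives. Everything else -- the estimates of $\eqref{ET.1}$, $\eqref{ET.5}$, etc. -- is routine tracking of the $L^\infty$ norms of $g$, $\dot g$, $\omega$, $\nb_N\dot P/\nb_N P$, and $\delta_{il}\partial x^l\Lie_B\partial x^i$ that appear in the definitions of $n_1$, $\bar n_1^T$, $\tilde n_1^T$, $f_1^T$.
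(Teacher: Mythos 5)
Your proposal is correct and follows essentially the same route as the paper: differentiate $E_T+D_T$, plug in the itemized bounds for \eqref{ET.1}--\eqref{ET.7}, note that $\mc{S}$ is closed under the bracket so that $[T,B],[[T,B],B]\in\mc{S}$ (forcing one to work with the whole family $\mc{T}$ at once), integrate from zero data, use \eqref{est.A} to absorb $D_T$, take the supremum and divide by $\sup_{[0,t]}E_1^T$, sum over $T\in\mc{T}$, and close with Gr\"onwall. The one place you are actually more careful than the paper's own write-up is the term $\norm{\Lie_{[T,B]}\Lie_B W}$ from \eqref{ET.3'}: you explicitly commute via $\Lie_{[T,B]}\Lie_B W=\Lie_B\Lie_{[T,B]}W+\Lie_{[[T,B],B]}W$ so that both pieces are visibly controlled by $E_1^{[T,B]}$ and $E_1^{[[T,B],B]}$, hence by $E_1^{\mc{T}}$; the paper leaves this absorption implicit (and its stated coefficient $3\norm{g}_{L^\infty}$ inside $n_1$ is bookkept rather loosely), so your version makes the linearity of the resulting Gr\"onwall inequality transparent.
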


\begin{proof}
	From the above argument, we have obtained
	\begin{align*}
	&\dot{E}_T+\dot{D}_T\\
	\ls &2E_1^T\Bigg\{\norm{F_T}+\norm{g^T}_{L^\infty(\Omega)}\norm{F}+\norm{\dot{g}^T}_{L^\infty(\Omega)}E_0 +\norm{\omega^T}_{L^\infty(\Omega)}E_0+\norm{g^T}_{L^\infty(\Omega)}E_1\\
	&\quad+2\norm{(\delta_{il}\D x^l\Lie_B\D x^i)_T}_{L^\infty(\Omega)}E_0
    +\left(\lnorm{\frac{\nb_NT\dot{P}}{\nb_NP}}E_0+\lnorm{\frac{\nb_NTP}{\nb_NP}}E_1\right)\\
	&\quad+\frac{1}{2}\left(1+\norm{\dot{g}}_{L^\infty(\Omega)}+3\norm{g}_{L^\infty(\Omega)}+\lnorm{\frac{\nb_N\dot{P}}{\nb_NP}}+4\norm{\delta_{il}\D x^l\Lie_B\D x^i}_{L^\infty(\Omega)}\right)E_1^T\Bigg\}.
	\end{align*}
	Since $E_T(0)=D_T(0)=0$, the integration over $[0,t]$ in time gives
	\begin{align*}
	{E}_T 
	\ls &2E_1^T\bar{n}_1^T+2\int_0^t E_1^T\big[n_1E_1^T+\tilde{n}_1^T+f_1^T\big]d\tau.
	\end{align*}
	Taking the supremum on $[0,t]$ in time and dividing by $\sup_{[0,t]}E_1^T$, we  sum over $T\in\mc{T}$ to get
	\begin{align}\label{est.E1S}
	E_1^{\mc{T}}\ls &2\sum_{T\in\mc{T}}\sup_{[0,t]}\bar{n}_1^T+2\int_0^t\Big[n_1E_1^{\mc{T}}+\sum_{T\in\mc{T}}\tilde{n}_1^T\Big]d\tau+f_1^{\mc{T}}\\
	\ls &\int_0^tn_1E_1^{\mc{T}}d\tau+\tilde{n}_1^{\mc{T}}+f_1^{\mc{T}}.\no
	\end{align}
	By the Gronwall inequality, we can obtain the desired estimates.
\end{proof}

\subsection{Estimates for the curl and the full derivatives of the first order} \label{sec.curl}

Now, we will derive the estimates of normal derivatives close to the boundary by using the estimates of the curl and the estimates of the tangential derivatives in view of Lemma \ref{lem.divcurl}. Thus, we have to derive the estimates of the curl and the time derivatives of the curl. For this reason, we need to use the $1$-form of $W$ and $\dot{W}$, denoted by $w$ and $\dot{w}$ respectively, i.e.,
\begin{align*}
w_a=g_{ab}W^b, \quad \dot{w}_a=g_{ab}\dot{W}^b,
\end{align*}
in which the latter notation is slightly confusing and $\dot{w}$ is not equal to $D_tw$, but we only try to indicate that $\dot{w}$ is the corresponding 1-form obtained by lowering the indices of the vector field $\dot{W}$. 

Let
\begin{align*}
\curl w_{ab}=\D_a w_b-\D_b w_a, \quad \underline{F}_a=g_{ab}F^b.
\end{align*}
Since $D_tw_a=D_t(g_{ab}W^b)=\dot{g}_{ab}W^b+g_{ab}\dot{W}^b$, we have
\begin{align}\label{Dtcurlw}
D_t\curl w_{ab}=&D_t(\D_aw_b-\D_bw_a)\no\\
=&\D_a(\dot{g}_{bc}W^c+g_{bc}\dot{W}^c)-\D_b(\dot{g}_{ac}W^c+g_{ac}\dot{W}^c)\no\\
=&(\D_a\dot{g}_{bc}-\D_b\dot{g}_{ac})W^c+\dot{g}_{bc}\D_aW^c-\dot{g}_{ac}\D_bW^c+\D_a\dot{w}_b-\D_b\dot{w}_a\no\\
=&\curl \dot{w}_{ab}+\D_c\omega_{ab}W^c+\dot{g}_{bc}\D_aW^c-\dot{g}_{ac}\D_bW^c\no\\
&+[(\dot{g}_{eb}-\omega_{eb})\D_a{\D_c x^k}-(\dot{g}_{ea}-\omega_{ea})\D_b{\D_c x^k}]\frac{\D y^e}{\D x^k}W^c,
\end{align}
since from \eqref{Dv} we have $2\D_bv_i=(\dot{g}_{cb}-\omega_{cb})\frac{\D y^c}{\D x^i}$ and 
\begin{align*}
\D_a \dot{g}_{db}-\D_d\dot{g}_{ab}=&\D_a[{\D_d x^i}{\D_b x^k}(\D_k v_i+\D_i v_k)]-\D_d[{\D_a x^i}{\D_b x^k}(\D_k v_i+\D_i v_k)]\\
=&{\D_d x^i}{\D_b x^k}\D_ax^l\D_l\D_k v_i-{\D_a x^i}{\D_b x^k}\D_dx^l\D_l\D_k v_i\\
&+({\D_d x^i}\D_a{\D_b x^k}-{\D_a x^k}\D_d{\D_b x^i})(\D_k v_i+\D_i v_k)\\
=&{\D_d x^i}\D_ax^k\D_b(\D_k v_i-\D_i v_k)+({\D_d x^i}\D_a{\D_b x^k}+{\D_a x^k}\D_d{\D_b x^i})(\D_k v_i-\D_i v_k)\\
&+2{\D_d x^i}\D_a{\D_b x^k}\D_i v_k-2{\D_a x^k}\D_d{\D_b x^i}\D_k v_i\\
=&\D_b\omega_{ad}+2(\D_d v_k\D_a{\D_b x^k}-\D_a v_k\D_d{\D_b x^k})\\
=& \D_b\omega_{ad}+[(\dot{g}_{cd}-\omega_{cd})\D_a{\D_b x^k}-(\dot{g}_{ca}-\omega_{ca})\D_d{\D_b x^k}]\frac{\D y^c}{\D x^k}.
\end{align*}

Due to $\dv W=0$, we can get from Lemma \ref{lem.11.3} and  \eqref{Dtcurlw} that
\begin{align*}
\abs{D_t\curl w}\ls& \abs{\curl\dot{w}}+\abs{\D \omega}\abs{W}+2\abs{\dot{g}}\abs{\D W}+(\abs{\dot{g}}+\abs{\omega})\abs{\D^2x}\abs{\frac{\D y}{\D x}}\abs{W}\no\\
\ls & \abs{\curl\dot{w}}+K_1 \abs{\dot{g}}\Big( |\curl w|+\sum_{S\in\mc{S}}|\Lie_S W|+[g]_1|W|\Big)\no\\
&+\left[\abs{\D \omega}+(\abs{\dot{g}}+\abs{\omega})\abs{\D^2x}\abs{\frac{\D y}{\D x}}\right]\abs{W}.
\end{align*}

Thus, we have to derive the estimates of $\curl\dot{w}$. From \eqref{1formeq}, we get 
\begin{align*}
D_t\dot{w}_a=&D_t(g_{ab}\dot{W}^b)=\dot{g}_{ab}\dot{W}^b+g_{ab}\ddot{W}^b\\
=&g_{ab}\Lie_B^2W^b-\D_a q+\D_a(W^c\D_cP)+\omega_{ab}\dot{W}^b+2\delta_{il}\D_a x^l\Lie_B\D_cx^i\Lie_BW^c+g_{ab}F^b.
\end{align*}
Note that the above equation can  be also formulated as
\begin{align}\label{Dtw}
D_t\dot{w}_a-g_{ab}\Lie_B^2W^b+g_{ab}(\A W^b-\mc{C}\dot{W}^b+\mc{X}\Lie_B W^b)=\underline{F}_a.
\end{align}
Then, we have
\begin{align}
D_t\curl\dot{w}_{ad}=&D_t(\D_a\dot{w}_d-\D_d\dot{w}_a)=\D_aD_t\dot{w}_d-\D_dD_t\dot{w}_a\no\\
=&\D_a(g_{db}\Lie_B^2W^b-\D_d q+\D_d(W^c\D_cP)+\omega_{db}\dot{W}^b+2\delta_{il}\D_d x^l\Lie_B\D_cx^i\Lie_BW^c\no\\
&+\underline{F}_d)-\D_d(g_{ab}\Lie_B^2W^b-\D_a q+\D_a(W^c\D_cP)\no\\
&+\omega_{ab}\dot{W}^b+2\delta_{il}\D_a x^l\Lie_B\D_cx^i\Lie_BW^c+\underline{F}_a)\no\\
=&\curl\underline{\Lie_B^2W}_{ad}+\D_b\omega_{da}\dot{W}^b+\omega_{db}\D_a\dot{W}^b-\omega_{ab}\D_d\dot{W}^b\no+\curl\underline{F}_{ad}\\
&+2\delta_{il}\D_d x^l\D_a(\Lie_B\D_cx^i\Lie_BW^c)-2\delta_{il}\D_a x^l\D_d(\Lie_B\D_cx^i\Lie_BW^c),\label{DtcurlDtw}
\end{align}
where we have used the identity $\D_a\omega_{db}-\D_d\omega_{ab}=\D_b\omega_{da}$ which can be verified by \eqref{gomega}. In fact,
\begin{align*}
\D_a\omega_{db}-\D_d\omega_{ab}=&\D_a[{\D_d x^i}{\D_b x^k}(\D_i v_k-\D_k v_i)]-\D_d[{\D_a x^i}{\D_b x^k}(\D_i v_k-\D_k v_i)]\\
=&{\D_d x^i}\D_a{\D_b x^k}(\D_i v_k-\D_k v_i)-{\D_a x^i}\D_d{\D_b x^k}(\D_i v_k-\D_k v_i)\\
&+{\D_d x^i}{\D_b x^k}\D_a(\D_i v_k-\D_k v_i)-{\D_a x^i}{\D_b x^k}\D_d(\D_i v_k-\D_k v_i)\\
=&\D_b[({\D_d x^i}\D_a{ x^k})(\D_i v_k-\D_k v_i)]-({\D_d x^i}\D_a{ x^k})\D_bx^l\D_l(\D_i v_k-\D_k v_i)\\
&+{\D_d x^i}{\D_b x^k}\D_ax^l\D_l(\D_i v_k-\D_k v_i)-{\D_a x^i}{\D_b x^k}\D_dx^l\D_l(\D_i v_k-\D_k v_i)\\
=&\D_b\omega_{da}-\D_l\D_iv_k({\D_d x^i}\D_a{ x^k}\D_bx^l-{\D_d x^i}{\D_b x^k}\D_ax^l+{\D_a x^i}{\D_b x^k}\D_dx^l)\\
&+\D_l\D_kv_i({\D_d x^i}\D_a{ x^k}\D_bx^l-{\D_d x^i}{\D_b x^k}\D_ax^l+{\D_a x^i}{\D_b x^k}\D_dx^l)\\
=&\D_b\omega_{da}-\D_l\D_iv_k{\D_d x^i}\D_a{ x^k}\D_bx^l+\D_l\D_kv_i{\D_a x^i}{\D_b x^k}\D_dx^l\\
=&\D_b\omega_{da}.
\end{align*}

From \eqref{A.Lie.Leib}, we get
\begin{align*}
\underline{\Lie_B^2W}_a=&g_{ea}\Lie_B^2W^e=\Lie_B^2w_a-2\Lie_B^2g_{ea}W^e-\Lie_B g_{ea}\Lie_BW^e,
\end{align*}
and then
\begin{align}\label{curlLB2W}
\curl\underline{\Lie_B^2W}_{ad}=&\curl\Lie_B^2w_{ad}-2[\D_a(\Lie_B^2g_{ed}W^e)-\D_d(\Lie_B^2g_{ea}W^e)]\no\\
&-[\D_a(\Lie_B g_{ed}\Lie_BW^e)-\D_d(\Lie_B g_{ea}\Lie_BW^e)]\no\\
=&\curl\Lie_B^2w_{ad}-2(\curl \Lie_B^2g_{e\cdot})_{ad}W^e-2[\Lie_B^2g_{ed}\D_aW^e-\Lie_B^2g_{ea}\D_dW^e]\no\\
&-(\curl \Lie_Bg_{e\cdot})_{ad}\Lie_BW^e-[\Lie_Bg_{ed}\D_a\Lie_BW^e-\Lie_Bg_{ea}\D_d\Lie_BW^e].
\end{align}

From \eqref{DtcurlDtw} and \eqref{curlLB2W}, it follows that
\begin{align}\label{DtcurlDtw'}
D_t\curl\dot{w}_{ad}
=&\curl\Lie_B^2w_{ad}-2(\curl \Lie_B^2g_{e\cdot})_{ad}W^e-(\curl \Lie_Bg_{e\cdot})_{ad}\Lie_BW^e+\D_b\omega_{da}\dot{W}^b\no\\
&-2[\Lie_B^2g_{ed}\D_aW^e-\Lie_B^2g_{ea}\D_dW^e]+[\omega_{db}\D_a\dot{W}^b-\omega_{ab}\D_d\dot{W}^b]\no\\
&+2\D_cB^b[\D_ag_{db}-\D_dg_{ab}]\Lie_BW^c+2[g_{db}\D_a\D_cB^b-g_{ab}\D_d\D_cB^b]\Lie_BW^c\no\\
&+2[\delta_{il}\D_d x^l\D_aB\D_cx^i-\delta_{il}\D_a x^l\D_dB\D_cx^i]\Lie_BW^c\no\\
&-[\Lie_Bg_{cd}\D_a\Lie_BW^c-\Lie_Bg_{ca}\D_d\Lie_BW^c]\no\\
&+2\D_cB^b[g_{db}\D_a\Lie_BW^c-g_{ab}\D_d\Lie_BW^c]\no\\
&+2\delta_{il}B\D_cx^i[\D_d x^l\D_a\Lie_BW^c-\D_a x^l\D_d\Lie_BW^c]+\curl\underline{F}_{ad}.
\end{align}

With the help of \eqref{A.comm.DtLie} and \eqref{Lie.curl}, we have
\begin{align*}
&\langle \curl\underline{\Lie_B^2W}, \curl\dot{w}\rangle\\
=&\int_{\Omega} g^{ab}g^{cd}\curl\underline{\Lie_B^2W}_{ad}\curl\dot{w}_{bc}dy\\
=&-\frac{1}{2}D_t\langle \curl\Lie_B w,\curl\Lie_B w\rangle+\int_{\Omega} \dot{g}^{ab}g^{cd}\curl\Lie_Bw_{ad}\curl\Lie_B w_{bc} dy\\
&-2\int_{\Omega} (\Lie_Bg^{ab})g^{cd}\curl\Lie_Bw_{ad}\curl\dot{w}_{bc}dy\\
&-\int_{\Omega} g^{ab}g^{cd}\curl\Lie_Bw_{ad}\Lie_B\Big\{\D_e\omega_{cb}W^e+\dot{g}_{be}\D_cW^e-\dot{g}_{ce}\D_bW^e\no\\
&+[(\dot{g}_{eb}-\omega_{eb})\D_c{\D_f x^k}-(\dot{g}_{ec}-\omega_{ec})\D_b{\D_f x^k}]\frac{\D y^e}{\D x^k}W^f\Big\}dy\\
&-\int_{\Omega} g^{ab}g^{cd}\big\{2(\curl \Lie_B^2g_{e\cdot})_{ad}W^e+2[\Lie_B^2g_{ed}\D_aW^e-\Lie_B^2g_{ea}\D_dW^e]\no\\
&+(\curl \Lie_Bg_{e\cdot})_{ad}\Lie_BW^e+[\Lie_Bg_{ed}\D_a\Lie_BW^e-\Lie_Bg_{ea}\D_d\Lie_BW^e]\big\}\curl\dot{w}_{bc}dy.
\end{align*}

Let
\begin{align*}
E_{\curl}(t)=\langle \curl w, \curl w\rangle+\langle \curl \dot{w}, \curl \dot{w}\rangle+\langle \curl \Lie_Bw, \curl \Lie_Bw\rangle.
\end{align*}
Taking the inner product of \eqref{DtcurlDtw'} with $\curl\dot{w}$, we obtain, with the help of \eqref{LieDy.commu}, that
\begin{align}\label{est.Ecurl}
\frac{1}{2}\frac{d}{dt}E_{\curl}(t)
\ls& (2\norm{\dot{g}}_{L^\infty(\Omega)}+\norm{\Lie_Bg}_{L^\infty(\Omega)})E_{\curl}\no\\
&+E_{\curl}^{1/2}[\norm{\Lie_B\D\omega}_{L^\infty(\Omega)}+\norm{\D\omega}_{L^\infty(\Omega)}]E_0\no\\
&+E_{\curl}^{1/2}[\norm{\Lie_B\dot{g}}_{L^\infty(\Omega)}+\norm{\dot{g}}_{L^\infty(\Omega)}]\no\\
&\cdot[\norm{\D W}+\norm{\D\Lie_B W}+\norm{\D^2 B}_{L^\infty(\Omega)}E_0]\no\\
&+E_{\curl}^{1/2}[\norm{\Lie_B\dot{g}}_{L^\infty(\Omega)}+\norm{\Lie_B \omega}_{L^\infty(\Omega)}]\norm{\D^2 x}_{L^\infty(\Omega)}\norm{\frac{\D y}{\D x}}_{L^\infty(\Omega)}E_0\no\\
&+E_{\curl}^{1/2}[\norm{\dot{g}}_{L^\infty(\Omega)}+\norm{\omega}_{L^\infty(\Omega)}]\big[\norm{\Lie_B\D^2 x}_{L^\infty(\Omega)}\norm{\frac{\D y}{\D x}}_{L^\infty(\Omega)}\no\\
&\qquad+\norm{\D^2 x}_{L^\infty(\Omega)}\norm{\Lie_B\frac{\D y}{\D x}}_{L^\infty(\Omega)}+2\norm{\D^2 x}_{L^\infty(\Omega)}\norm{\frac{\D y}{\D x}}_{L^\infty(\Omega)}\big]E_0\no\\
&+E_{\curl}^{1/2}[2\norm{\curl\Lie_B^2 g}_{L^\infty(\Omega)}+\norm{\curl\Lie_B g}_{L^\infty(\Omega)}]E_0\no\\
&+E_{\curl}^{1/2}[4\norm{\Lie_B^2 g}_{L^\infty(\Omega)}\norm{\D W}+2\norm{\Lie_B g}_{L^\infty(\Omega)}\norm{\D\Lie_B W}]\no\\
&+4E_{\curl}^{1/2}[\norm{\delta_{il}\D_d x^l\D_a(B\D_cx^i)+\D_a(g_{db}\D_cB^b)}_{L^\infty(\Omega)}E_0\no\\
&\qquad+\norm{\delta_{il}\D_d x^lB\D_cx^i+g_{db}\D_cB^b}_{L^\infty(\Omega)}\norm{\D\Lie_BW}]\no\\
&+2E_{\curl}^{1/2}\norm{\omega}_{L^\infty(\Omega)}\norm{\D \dot{W}}+E_{\curl}^{1/2}\norm{\curl\underline{F}}.
\end{align}
 
From \eqref{DW} and \eqref{RW}, it follows that
\begin{align}
\norm{\D W}
\ls &K_1 \left(\norm{\curl w}+\sum_{S\in\mc{S}}\norm{\Lie_S W}+\norm{[g]_1}_{L^\infty(\Omega)}\norm{W}\right)\no\\
\ls &K_1 \norm{\curl w}+K_1 E_1^{\mc{S}}+\norm{[g]_1}_{L^\infty(\Omega)}E_0,\label{est.DW}\\
\norm{\D \dot{W}}
\ls &K_1 \left(\norm{\curl \dot{w}}+\sum_{S\in\mc{S}}\norm{\Lie_S \dot{W}}+\norm{[g]_1}_{L^\infty(\Omega)}\norm{\dot{W}}\right)\no\\
\ls &K_1 \norm{\curl \dot{w}}+K_1 E_1^{\mc{S}}+\norm{[g]_1}_{L^\infty(\Omega)}E_0.\label{est.DDtW}
\end{align}
Then, from 
\begin{align*}
\underline{\Lie_BW}_a=g_{ab}\Lie_BW^b=\Lie_Bw_a-(\Lie_Bg_{ab})W^b,
\end{align*}
we have 
\begin{align}\label{DLBW}
\norm{\D \Lie_B W}\ls &K_1 \left(\norm{\curl \underline{\Lie_BW}}+\sum_{S\in\mc{S}}\norm{[\Lie_S,\Lie_B] W}+E_1^{\mc{S}}+\norm{[g]_1}_{L^\infty(\Omega)}E_0\right)\no\\
\ls &K_1 \Big(\norm{\curl\Lie_B w}+(\norm{B}_{1}^{\mc{S}}+\norm{\Lie_Bg}_{L^\infty(\Omega)}) \norm{\curl w}+E_1^{\mc{S}}\no\\
&\qquad\quad+(\norm{[g]_1}_{L^\infty(\Omega)}+\norm{\D\Lie_Bg}_{L^\infty(\Omega)})E_0\Big).
\end{align}
Combining \eqref{est.Ecurl}, \eqref{est.DW}, \eqref{est.DDtW} and \eqref{DLBW}, we obtain
\begin{align*}
\frac{d}{dt}(E_{\curl}^{1/2}(t))
\ls&n_{1,\curl}(E_{\curl}^{1/2}+E_1^{\mc{S}})
+\tilde{n}_{1,\curl}E_0+\norm{\curl\underline{F}},
\end{align*}
where
\begin{align*}
n_{1,\curl}=&K_1 (1+\norm{B}_{1}^{\mc{S}}+\norm{\Lie_Bg}_{L^\infty(\Omega)})\Big(\norm{\Lie_B\dot{g}}_{L^\infty(\Omega)}+\norm{\dot{g}}_{L^\infty(\Omega)}\\
&\quad+\norm{\Lie_B^2 g}_{L^\infty(\Omega)}+\norm{\Lie_B g}_{L^\infty(\Omega)}+\norm{\delta_{il}\D_d x^lB\D_cx^i\\
	&\quad+g_{db}\D_cB^b}_{L^\infty(\Omega)}+\norm{\omega}_{L^\infty(\Omega)}\Big),\\
\tilde{n}_{1,\curl}=&n_{1,\curl}(\norm{[g]_1}_{L^\infty(\Omega)}+\norm{\D\Lie_Bg}_{L^\infty(\Omega)})\\
&+K_1 \Big(\norm{\Lie_B\D\omega}_{L^\infty(\Omega)}+\norm{\D\omega}_{L^\infty(\Omega)}+[\norm{\Lie_B\dot{g}}_{L^\infty(\Omega)}\\
&\quad+\norm{\dot{g}}_{L^\infty(\Omega)}]\norm{\D^2 B}_{L^\infty(\Omega)}\\
&+[\norm{\Lie_B\dot{g}}_{L^\infty(\Omega)}+\norm{\Lie_B \omega}_{L^\infty(\Omega)}]\norm{\D^2 x}_{L^\infty(\Omega)}\lnorm{\frac{\D y}{\D x}}_{L^\infty(\Omega)}\no\\
&+[\norm{\dot{g}}_{L^\infty(\Omega)}+\norm{\omega}_{L^\infty(\Omega)}]\Big[\norm{\Lie_B\D^2 x}_{L^\infty(\Omega)}\lnorm{\frac{\D y}{\D x}}_{L^\infty(\Omega)}\no\\
&\quad+\norm{\D^2 x}_{L^\infty(\Omega)}\lnorm{\Lie_B\frac{\D y}{\D x}}_{L^\infty(\Omega)}+2\norm{\D^2 x}_{L^\infty(\Omega)}\lnorm{\frac{\D y}{\D x}}_{L^\infty(\Omega)}\Big]\\
&\quad+\norm{\curl\Lie_B^2 g}_{L^\infty(\Omega)}+\norm{\curl\Lie_B g}_{L^\infty(\Omega)}\\
&+\norm{\delta_{il}\D_d x^l\D_a(B\D_cx^i)+\D_a(g_{db}\D_cB^b)}_{L^\infty(\Omega)} \Big).
\end{align*}
Due to $E_{\curl}(0)=0$, the integration over $[0,t]$ in time gives
\begin{align}\label{est.1curl}
E_{\curl}^{1/2}(t)\ls \int_0^t [n_{1,\curl}(E_{\curl}^{1/2}+E_1^{\mc{S}})
+\tilde{n}_{1,\curl}E_0+\norm{\curl\underline{F}}] d\tau.
\end{align}

From \eqref{est.E1S} and \eqref{est.1curl}, we have
\begin{align*}
E_1^{\mc{S}}+E_{\curl}^{1/2}\ls& \int_0^t (n_{1,\curl}+n_1^{\mc{S}}+\bar{n}_{1,\curl})(E_{\curl}^{1/2}+E_1^{\mc{S}})d\tau+\tilde{f}_1,
\end{align*}
where
\begin{align*}
\bar{n}_{1,\curl}=&K_1 \norm{g}_{L^\infty(\Omega)}\norm{B}_1^{\mc{S}}(1+\norm{B}_1^{\mc{S}}+\norm{B}_{W^{1,\infty}(\Omega)}+\norm{\delta_{il}\D x^lB\D x^i}_{L^\infty(\Omega)}),\\
\tilde{f}_1=&\tilde{n}_1^{\mc{S}}+\int_0^t(\tilde{n}_{1,\curl}+\bar{n}_{1,\curl}\norm{\curl\Lie_B g}_{L^\infty(\Omega)})E_0 d\tau\\
&+2\int_0^t\Big[\sum_{T\in\mc{S}}f_1^T+\norm{\curl\underline{F}}\Big] d\tau.
\end{align*}
Therefore, by the Gronwall inequality, we have obtained the following estimates for both the first order tangential derivatives and the curl.

\begin{proposition}\label{prop.est.1curl}
	It holds
\begin{align*}
E_1^{\mc{S}}(t)+E_{\curl}^{1/2}(t)\ls \tilde{f}_1(t)+\int_0^t& \tilde{f}_1(s)\left(n_{1,\curl}(s)+n_1^{\mc{S}}(s)+\bar{n}_{1,\curl}(s)\right)\\
&\cdot  \exp\left(\int_s^t(n_{1,\curl}(\tau)+n_1^{\mc{S}}(\tau)+\bar{n}_{1,\curl}(\tau))d\tau\right)ds.
\end{align*}
\end{proposition}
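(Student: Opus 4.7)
The plan is to combine the already-derived tangential estimate \eqref{est.E1S} with the curl estimate \eqref{est.1curl} to produce a single integral inequality of Gronwall type for the combined quantity $X(t):=E_1^{\mc{S}}(t)+E_{\curl}^{1/2}(t)$, and then invoke the standard Gronwall lemma to obtain the stated exponential-integral bound.

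First, I would add \eqref{est.1curl} to \eqref{est.E1S}. The right-hand side of \eqref{est.E1S} contains $\int_0^t n_1 E_1^{\mc{S}}\,d\tau$ plus $\tilde{n}_1^{\mc{S}}+f_1^{\mc{S}}$, while that of \eqref{est.1curl} contains $\int_0^t n_{1,\curl}(E_{\curl}^{1/2}+E_1^{\mc{S}})\,d\tau$ plus $\int_0^t(\tilde{n}_{1,\curl}E_0+\|\curl\underline F\|)\,d\tau$; the low-order term $\bar n_{1,\curl}E_{\curl}^{1/2}$ is absorbed by enlarging the coefficient in front of $X$. Using the definition of $\tilde f_1$ just before the proposition to collect the data-dependent terms (including $\tilde n_1^{\mc{S}}$, $\int_0^t\tilde n_{1,\curl}E_0\,d\tau$, the curl of $\underline F$, and the $\bar n_{1,\curl}\|\curl\Lie_Bg\|_{L^\infty}E_0$ contribution), the addition yields
\begin{equation*}
X(t)\ls \tilde f_1(t)+\int_0^t a(\tau)\,X(\tau)\,d\tau,\qquad a(\tau):=n_{1,\curl}(\tau)+n_1^{\mc{S}}(\tau)+\bar n_{1,\curl}(\tau).
\end{equation*}

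Next, I would apply the standard Gronwall inequality in its nonhomogeneous form: if $X(t)\ls \tilde f_1(t)+\int_0^t a(\tau)X(\tau)\,d\tau$ with $a\gs 0$, then
\begin{equation*}
X(t)\ls \tilde f_1(t)+\int_0^t \tilde f_1(s)\,a(s)\,\exp\!\left(\int_s^t a(\tau)\,d\tau\right)ds.
\end{equation*}
This is precisely the claimed bound. The argument is standard: set $Y(t):=\int_0^t a(\tau)X(\tau)\,d\tau$, so that $Y'(t)\ls a(t)(\tilde f_1(t)+Y(t))$; multiplying by the integrating factor $\exp(-\int_0^t a)$, integrating from $0$ to $t$, then substituting back into $X\ls \tilde f_1+Y$ produces the formula.

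No substantive obstacle arises here because the heavy lifting, namely the derivation of the differential inequalities for the tangential energies and for $E_{\curl}$, has already been completed in Subsections on tangential derivatives and on the curl. The only mild point of care is the presence of the $\sup_{[0,t]}$ that appears in \eqref{est.E1S}; since the map $t\mapsto X(t)$ need not be monotone, I would, as in the proof of Proposition \ref{prop.E1S}, first take the supremum of the combined inequality over $[0,t]$ before invoking Gronwall, using that the right-hand side is nondecreasing in the upper limit of integration. This justifies the application of Gronwall to the (nondecreasing) majorant of $X$, and then $X(t)$ itself is bounded by this majorant, giving the statement.
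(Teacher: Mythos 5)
Your proposal follows the paper's argument exactly: you add the tangential estimate \eqref{est.E1S} to the curl estimate \eqref{est.1curl} to obtain the combined integral inequality $E_1^{\mc{S}}+E_{\curl}^{1/2}\ls \tilde f_1 + \int_0^t(n_{1,\curl}+n_1^{\mc{S}}+\bar n_{1,\curl})(E_1^{\mc{S}}+E_{\curl}^{1/2})\,d\tau$ and then apply the nonhomogeneous Gronwall lemma, which is precisely what the paper does in the paragraph preceding the proposition. Your caveat about the suprema is handled correctly as in Proposition~\ref{prop.E1S}, since the right-hand side of the combined inequality is nondecreasing in $t$.
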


\begin{remark}
By Lemma \ref{lem.11.3}, we have the estimates for the first-order derivative of $W$.
\end{remark}

\subsection{The higher-order estimates for the curl and the normal derivatives}\label{sec.curl.r}

Now, we need to get the equations for the curl of higher order derivatives. Since the Lie derivative commutes with $D_t$ and the curl, applying $\Lie_U^J$ to \eqref{DtcurlDtw'} and \eqref{Dtcurlw} gives
\begin{align}\label{DtcurlDtLUJw}
D_t\curl\Lie_U^J\dot{w}_{ad}
=&\curl\Lie_U^J\Lie_B^2w_{ad}+\curl\Lie_U^J\underline{F}_{ad}-2c_{J_1J_2}(\curl\Lie_U^{J_1} \Lie_B^2g_{e\cdot})_{ad}\Lie_U^{J_2}W^e\no\\
&-c_{J_1J_2}(\curl \Lie_U^{J_1}\Lie_Bg_{e\cdot})_{ad}\Lie_U^{J_2}\Lie_BW^e+c_{J_1J_2}\Lie_U^{J_1}\D_b\omega_{da}\Lie_U^{J_2}\dot{W}^b\no\\
&-2c_{J_1J_2}[\Lie_U^{J_1}\Lie_B^2g_{ed}[\Lie_U^{J_2},\D_a]W^e-\Lie_U^{J_1}\Lie_B^2g_{ea}[\Lie_U^{J_2},\D_d]W^e]\no\\
&-2c_{J_1J_2}[\Lie_U^{J_1}\Lie_B^2g_{ed}\D_a\Lie_U^{J_2}W^e-\Lie_U^{J_1}\Lie_B^2g_{ea}\D_d\Lie_U^{J_2}W^e]\no\\
&+c_{J_1J_2}[\Lie_U^{J_1}\omega_{db}[\Lie_U^{J_2},\D_a]\dot{W}^b-\Lie_U^{J_1}\omega_{ab}[\Lie_U^{J_2},\D_d]\dot{W}^b]\no\\
&+c_{J_1J_2}[\Lie_U^{J_1}\omega_{db}\D_a\Lie_U^{J_2}\dot{W}^b-\Lie_U^{J_1}\omega_{ab}\D_d\Lie_U^{J_2}\dot{W}^b]\no\\
&+2c_{J_1J_2}\Lie_U^{J_1}[\D_cB^b(\D_ag_{db}-\D_dg_{ab})]\Lie_U^{J_2}\Lie_BW^c\no\\
&+2c_{J_1J_2}\Lie_U^{J_1}[g_{db}\D_a\D_cB^b-g_{ab}\D_d\D_cB^b]\Lie_U^{J_2}\Lie_BW^c\no\\
&+2c_{J_1J_2}\Lie_U^{J_1}[\delta_{il}\D_d x^l\D_aB\D_cx^i-\delta_{il}\D_a x^l\D_dB\D_cx^i]\Lie_U^{J_2}\Lie_BW^c\no\\
&-c_{J_1J_2}[\Lie_U^{J_1}\Lie_Bg_{cd}[\Lie_U^{J_2},\D_a]\Lie_BW^c-\Lie_U^{J_1}\Lie_Bg_{ca}[\Lie_U^{J_2},\D_d]\Lie_BW^c]\no\\
&-c_{J_1J_2}[\Lie_U^{J_1}\Lie_Bg_{cd}\D_a\Lie_U^{J_2}\Lie_BW^c-\Lie_U^{J_1}\Lie_Bg_{ca}\D_d\Lie_U^{J_2}\Lie_BW^c]\no\\
&+2c_{J_1J_2}\delta_{il}[\Lie_U^{J_1}(B\D_cx^i\D_d x^l)[\Lie_U^{J_2},\D_a]\Lie_BW^c\no\\
&\qquad\qquad\qquad-\Lie_U^{J_1}(B\D_cx^i\D_a x^l)[\Lie_U^{J_2},\D_d]\Lie_BW^c]\no\\
&+2c_{J_1J_2}\delta_{il}[\Lie_U^{J_1}(B\D_cx^i\D_d x^l)\D_a\Lie_U^{J_2}\Lie_BW^c\no\\
&\qquad\qquad\qquad-\Lie_U^{J_1}(B\D_cx^i\D_a x^l)\D_d\Lie_U^{J_2}\Lie_BW^c]\no\\
&+2c_{J_1J_2}[\Lie_U^{J_1}(g_{db}\D_cB^b)[\Lie_U^{J_2},\D_a]\Lie_BW^c\no\\
&\qquad\qquad\qquad-\Lie_U^{J_1}(g_{ab}\D_cB^b)[\Lie_U^{J_2},\D_d]\Lie_BW^c]\no\\
&+2c_{J_1J_2}[\Lie_U^{J_1}(g_{db}\D_cB^b)\D_a\Lie_U^{J_2}\Lie_BW^c\no\\
&\qquad\qquad\qquad-\Lie_U^{J_1}(g_{ab}\D_cB^b)\D_d\Lie_U^{J_2}\Lie_BW^c],
\end{align}
and
\begin{align}\label{DtcurlLUJw}
D_t\curl\Lie_U^J w_{ab}
=&\curl\Lie_U^J \dot{w}_{ab}+c_{J_1J_2}\Lie_U^{J_1}\D_c\omega_{ab}\Lie_U^{J_2}W^c\no\\
&+c_{J_1J_2}(\Lie_U^{J_1}\dot{g}_{bc}[\Lie_U^{J_2},\D_a]W^c-\Lie_U^{J_1}\dot{g}_{ac}[\Lie_U^{J_2},\D_b]W^c)\no\\
&+c_{J_1J_2}(\Lie_U^{J_1}\dot{g}_{bc}\D_a\Lie_U^{J_2}W^c-\Lie_U^{J_1}\dot{g}_{ac}\D_b\Lie_U^{J_2}W^c)\no\\
&+c_{J_1J_2}\Lie_U^{J_1}\left\{[(\dot{g}_{eb}-\omega_{eb})\D_a{\D_c x^k}-(\dot{g}_{ea}-\omega_{ea})\D_b{\D_c x^k}]\frac{\D y^e}{\D x^k}\right\}\Lie_U^{J_2}W^c.
\end{align}

At this point, we have to derive the commutator $[\Lie_U^J,\D]$. If $|J|=1$, it is just the identity \eqref{LieDy.commu}. For $|J|\gs 2$, we have the following identity.

\begin{lemma}\label{lem.LUJD.commu}
	For $|J|=r\gs 1$ and $|J_r|=1$, it holds
	\begin{align}\label{comm.LieUJDa}
	[\Lie_U^{J-J_r}\Lie_U^{J_r},\D_a]W^b=W^c\Lie_U^{J-J_r}\D_c\D_aU_{J_r}^b+\sum_{J=I_1+I_2+I_3\atop |I_3|=1}\sgn(|I_1|)\Lie_U^{I_1}W^c\Lie_U^{I_2}\D_c\D_aU_{I_3}^b.
	\end{align}
\end{lemma}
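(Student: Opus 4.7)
The plan is induction on $r=|J|$. The base case $r=1$ is exactly \eqref{LieDy.commu}: the principal term on the right-hand side is $W^c\D_c\D_a U_{J_1}^b$, and the sum is empty because any decomposition with $|I_3|=1$ and $|J|=1$ forces $|I_1|=|I_2|=0$, which is killed by $\sgn(|I_1|)$.

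For the inductive step $r\gs 2$, I would split $\Lie_U^J=\Lie_U^{J-J_r}\Lie_U^{J_r}$ and apply the operator identity $[AB,C]=A[B,C]+[A,C]B$ with $A=\Lie_U^{J-J_r}$, $B=\Lie_U^{J_r}$, $C=\D_a$, obtaining
\begin{align*}
[\Lie_U^J,\D_a]W^b=\Lie_U^{J-J_r}\bigl([\Lie_U^{J_r},\D_a]W^b\bigr)+[\Lie_U^{J-J_r},\D_a]\bigl(\Lie_U^{J_r}W^b\bigr).
\end{align*}
For the first summand, the base case rewrites $[\Lie_U^{J_r},\D_a]W^b$ as $W^c\D_c\D_a U_{J_r}^b$, and then the iterated Leibniz rule for Lie derivatives (treating $\D_c\D_a U_{J_r}^b$ componentwise as a $(1,2)$-tensor) yields $\sum_{J-J_r=K_1+K_2}\Lie_U^{K_1}W^c\cdot\Lie_U^{K_2}\D_c\D_a U_{J_r}^b$. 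The $K_1=\emptyset$ contribution is precisely the principal term displayed in the lemma, while each $|K_1|\gs 1$ contribution is a summand with $I_3=J_r$ and $|I_1|\gs 1$. For the second summand, apply the induction hypothesis to the multi-index $J-J_r$ of length $r-1$, with $\Lie_U^{J_r}W$ replacing $W$; using $\Lie_U^{L_1}(\Lie_U^{J_r}W)=\Lie_U^{L_1+J_r}W$ to re-absorb $J_r$ into the ``$W$-slot,'' the recursive principal plus sum terms yield exactly the remaining decompositions of $J$, namely those for which $j_r$ sits at the tail of $I_1$.

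The main obstacle is the combinatorial bookkeeping needed to verify that the Part~1 decompositions (those with $I_3=J_r$) together with the Part~2 decompositions (those with $j_r$ appended at the end of $I_1$) reproduce precisely the triples $(I_1,I_2,I_3)$ indexing the claimed sum, with the correct multiplicities inherited from the Leibniz expansion and from the recursion. A secondary but essential point is that $\D_c\D_a U^b$ is not a genuine tensor, so one must fix a consistent convention---the formal $(1,2)$-tensor Lie-derivative formula---for the action of $\Lie_U$ on such partial-derivative expressions, ensuring that both the Leibniz step in Part~1 and the induction step close consistently.
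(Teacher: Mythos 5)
Your proposal is correct and follows essentially the same route as the paper's proof: the paper likewise induces on $r$, splits $[\Lie_U^{J-J_r}\Lie_U^{J_r},\D_a]$ via the Jacobi-type identity $[AB,C]=A[B,C]+[A,C]B$, expands $\Lie_U^{J-J_r}$ applied to the base-case commutator by the Leibniz rule to produce the $I_3=J_r$ terms, and feeds $\Lie_U^{J_r}W$ into the induction hypothesis to supply the decompositions with $J_r$ absorbed into $I_1$.
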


\begin{proof}
For $r=1$, it follows from \eqref{def.Lie}
\begin{align}
[\Lie_U,\D_a]W^b=\D_c\D_aU^bW^c.
\end{align}
For $r\gs 2$, we prove it by induction argument. For $r=2$, we have
\begin{align*}
[\Lie_{U_1}\Lie_{U_2},\D_a]W^b=&\Lie_{U_1}[\Lie_{U_2},\D_a]W^b+[\Lie_{U_1},\D_a]\Lie_{U_2}W^b\\
=&\Lie_{U_1}(W^d\D_d\D_aU_2^b)+(\Lie_{U_2}W^d)\D_d\D_aU_1^b\\
=&(\Lie_{U_1}W^d)\D_d\D_aU_2^b+(\Lie_{U_2}W^d)\D_d\D_aU_1^b+W^d\Lie_{U_1}\D_d\D_aU_2^b,
\end{align*}
which satisfies \eqref{comm.LieUJDa}.

Now, we assume that \eqref{comm.LieUJDa} holds for $r=s$. Then, we derive the case $r=s+1$. For $|J|=s+1$ and $|J_r|=1$, one gets by using \eqref{LieDy.commu}
\begin{align*}
&[\Lie_U^{J-J_{s+1}}\Lie_U^{J_{s+1}},\D_a]W^b\\
=&\Lie_U^{J-J_{s+1}}[\Lie_U^{J_{s+1}},\D_a]W^b+[\Lie_U^{J-J_{s+1}},\D_a]\Lie_U^{J_{s+1}}W^b\\
=&\Lie_U^{J-J_{s+1}}(W^d\D_d\D_aU_{J_{s+1}}^b)+\Lie_U^{J_{s+1}}W^d\Lie_U^{J-J_s-J_{s+1}}\D_d\D_aU_{J_s}^b\\
&+\sum_{J-J_{s+1}=J_1+J_2+J_3\atop |J_1|\gs 1,\;|J_3|=1}\Lie_U^{J_1}\Lie_U^{J_{s+1}}W^d\Lie_U^{J_2}\D_d\D_aU_{J_3}^b\\
=&\sum_{J-J_{s+1}=J_1+J_2}\Lie_U^{J_1}W^d\Lie_U^{J_2}\D_d\D_aU_{J_{s+1}}^b+\Lie_U^{J_{s+1}}W^d\Lie_U^{J-J_s-J_{s+1}}\D_d\D_aU_{J_s}^b\\
&+\sum_{J-J_{s+1}=J_1+J_2+J_3\atop |J_1|\gs 1,\;|J_3|=1}\Lie_U^{J_1}\Lie_U^{J_{s+1}}W^d\Lie_U^{J_2}\D_d\D_aU_{J_3}^b\\
=&W^d\Lie_U^{J-J_{s+1}}\D_d\D_aU_{J_{s+1}}^b+\sum_{J=J_1+J_2+J_3\atop |J_1|\gs 1,\;|J_3|=1}\Lie_U^{J_1}W^d\Lie_U^{J_2}\D_d\D_aU_{J_3}^b,
\end{align*}
which is of the form in \eqref{comm.LieUJDa} with $r=s+1$. Thus, we proved the identity by induction.	
\end{proof}

For $U\in \mc{U}$ and $|J|=r-1$, let
\begin{align*}
E_{r-1,\curl}(t)=&\langle \curl\Lie_U^J w, \curl\Lie_U^J w\rangle+\langle \curl\Lie_U^J \dot{w}, \curl\Lie_U^J \dot{w}\rangle\\
&+\langle \curl \Lie_B\Lie_U^Jw, \curl \Lie_B\Lie_U^Jw\rangle.
\end{align*}
Then, from \eqref{DtcurlLUJw} it follows
\begin{align*}
&\frac{1}{2}\frac{d}{dt}E_{r,\curl}(t)\\
= &\int_{\Omega}\dot{g}^{ab}g^{cd}(\curl\Lie_U^J w_{ad}\curl\Lie_U^J w_{bc}+\curl\Lie_U^J \dot{w}_{ad}\curl\Lie_U^J \dot{w}_{bc}\\
&\qquad\qquad\qquad+\curl \Lie_B\Lie_U^Jw_{ad}\curl \Lie_B\Lie_U^Jw_{bc})dy\\
&-\int_{\Omega}\Lie_B( g^{ab}g^{cd})\curl \Lie_B\Lie_U^Jw_{ad} \curl\Lie_U^J\dot{w}_{bc} dy\\
&+\langle \curl\Lie_U^J \dot{w}, \curl\Lie_U^J w\rangle+\langle \curl\Lie_U^J \dot{w}, D_t\curl\Lie_U^J \dot{w}- \curl \Lie_B^2\Lie_U^Jw\rangle\\
&+\Big\langle \curl \Lie_U^Jw, c_{J_1J_2}\Big(\Lie_U^{J_1}\D_c\omega_{ab}\Lie_U^{J_2}W^c+(\Lie_U^{J_1}\dot{g}_{bc}[\Lie_U^{J_2},\D_a]W^c-\Lie_U^{J_1}\dot{g}_{ac}[\Lie_U^{J_2},\D_b]W^c)\no\\
&\qquad+(\Lie_U^{J_1}\dot{g}_{bc}\D_a\Lie_U^{J_2}W^c-\Lie_U^{J_1}\dot{g}_{ac}\D_b\Lie_U^{J_2}W^c)\no\\
&\qquad+\Lie_U^{J_1}\Big\{[(\dot{g}_{eb}-\omega_{eb})\D_a{\D_c x^k}-(\dot{g}_{ea}-\omega_{ea})\D_b{\D_c x^k}]\frac{\D y^e}{\D x^k}\Big\}\Lie_U^{J_2}W^c\Big)\Big\rangle\\
&+\Big\langle \curl \Lie_B\Lie_U^Jw, c_{J_1J_2}\Lie_B\Big(\Lie_U^{J_1}\D_c\omega_{ab}\Lie_U^{J_2}W^c+(\Lie_U^{J_1}\dot{g}_{bc}[\Lie_U^{J_2},\D_a]W^c\no\\
&\qquad-\Lie_U^{J_1}\dot{g}_{ac}[\Lie_U^{J_2},\D_b]W^c)+(\Lie_U^{J_1}\dot{g}_{bc}\D_a\Lie_U^{J_2}W^c-\Lie_U^{J_1}\dot{g}_{ac}\D_b\Lie_U^{J_2}W^c)\no\\
&\qquad+\Lie_U^{J_1}\Big\{[(\dot{g}_{eb}-\omega_{eb})\D_a{\D_c x^k}-(\dot{g}_{ea}-\omega_{ea})\D_b{\D_c x^k}]\frac{\D y^e}{\D x^k}\Big\}\Lie_U^{J_2}W^c\Big)\Big\rangle.
\end{align*}
Thus, by \eqref{DtcurlDtLUJw}   and Lemma \ref{lem.11.3}, we get
\begin{align}
&\frac{d}{dt}E_{r-1,\curl}^{1/2}(t)\no\\
\ls &(1+\norm{\dot{g}}_{L^\infty(\Omega)}+\norm{\Lie_Bg}_{L^\infty(\Omega)})E_{r,\curl}^{1/2}+\norm{\curl\Lie_U^J\underline{F}}\no\\
&+\norm{\curl[\Lie_U^J,\Lie_B^2] w}+Cc_{J_1J_2}\Big(\norm{\curl\Lie_U^{J_1} \Lie_B^2g}_{L^\infty(\Omega)}\norm{\Lie_U^{J_2}W}\label{Er1c.1}\\
&+\norm{\Lie_U^{J_1}\D\omega}_{L^\infty(\Omega)}\norm{\Lie_U^{J_2}\dot{W}}+\norm{\curl\Lie_U^{J_1} \Lie_Bg}_{L^\infty(\Omega)}(\norm{\Lie_B\Lie_U^{J_2}W}\no\\
&+K_1 \tilde{c}_{J_2}^{J_{21}J_{22}}[\norm{B_{J_{21}}}_{L^\infty(\Omega)}(\norm{\curl\underline{W_{J_{22}}}}+\sum_{S\in\mc{S}}\norm{\Lie_SW_{J_{22}}})+\norm{W_{J_{22}}}\label{Er1c.2}\\
&\cdot(\norm{\curl\underline{B_{J_{21}}}}_{L^\infty(\Omega)}+\sum_{S\in\mc{S}}\norm{\Lie_SB_{J_{21}}}_{L^\infty(\Omega)}+\norm{[g]_1}_{L^\infty(\Omega)}\norm{B_{J_{21}}}_{L^\infty(\Omega)})])\no\\
&+(\norm{\Lie_U^{J_1}\Lie_B^2g}_{L^\infty(\Omega)}+\norm{\Lie_U^{J_1}\dot{g}}_{L^\infty(\Omega)}+\norm{\Lie_B\Lie_U^{J_1}\dot{g}}_{L^\infty(\Omega)})\no\\
&\qquad\cdot(\norm{[\Lie_U^{J_2},\D]W}+K_1(\norm{\curl\underline{\Lie_U^{J_2}W}}+\sum_{S\in\mc{S}}\norm{\Lie_S\Lie_U^{J_2}W}\label{Er1c.3}\\
&+\norm{[g]_1}_{L^\infty(\Omega)}\norm{\Lie_U^{J_2}W}))+\norm{\Lie_U^{J_1}\omega}_{L^\infty(\Omega)}(\norm{[\Lie_U^{J_2},\D ]\dot{W}}+K_1(\norm{\curl\underline{\Lie_U^{J_2}\dot{W}}}\label{Er1c.4}\\
&\qquad\qquad+\sum_{S\in\mc{S}}\norm{\Lie_S\Lie_U^{J_2}\dot{W}}+\norm{[g]_1}_{L^\infty(\Omega)}\norm{\Lie_U^{J_2}\dot{W}}))\no\\
&+[\norm{\Lie_U^{J_1}[\D B^b(\D_ag_{db}-\D_dg_{ab})+g_{db}\D_a\D B^b-g_{ab}\D_d\D B^b]}_{L^\infty(\Omega)}\no\\
&\qquad+\norm{\Lie_U^{J_1}[\delta_{il}\D_d x^l\D_aB\D_cx^i-\delta_{il}\D_a x^l\D_dB\D_cx^i]}_{L^\infty(\Omega)}](\norm{\Lie_B\Lie_U^{J_2}W}\no\\
&+K_1 \tilde{c}_{J_2}^{J_{21}J_{22}}[\norm{B_{J_{21}}}_{L^\infty(\Omega)}(\norm{\curl\underline{W_{J_{22}}}}+\sum_{S\in\mc{S}}\norm{\Lie_SW_{J_{22}}})+(\norm{\curl\underline{B_{J_{21}}}}_{L^\infty(\Omega)}\no\\
&+\sum_{S\in\mc{S}}\norm{\Lie_SB_{J_{21}}}_{L^\infty(\Omega)}+\norm{[g]_1}_{L^\infty(\Omega)}\norm{B_{J_{21}}}_{L^\infty(\Omega)})\norm{W_{J_{22}}}])\no\\
&+(\norm{\Lie_U^{J_1}\Lie_Bg}_{L^\infty(\Omega)}+\norm{\delta_{il}\Lie_U^{J_1}(B\D x^i\D  x^l)}_{L^\infty(\Omega)}+\norm{\Lie_U^{J_1}(g_{db}\D_cB^b)}_{L^\infty(\Omega)})\no\\
&\qquad\cdot(\norm{[\Lie_U^{J_2},\D]\Lie_BW}+K_1(\norm{\curl\underline{\Lie_U^{J_2}\Lie_BW}}+\sum_{S\in\mc{S}}\norm{\Lie_S\Lie_U^{J_2}\Lie_BW}\label{Er1c.5}\\
&\qquad+\norm{[g]_1}_{L^\infty(\Omega)}(\norm{\Lie_B\Lie_U^{J_2}W}+K_1 \tilde{c}_{J_2}^{J_{21}J_{22}}[\norm{B_{J_{21}}}_{L^\infty(\Omega)}(\norm{\curl\underline{W_{J_{22}}}}\no\\
&\qquad+\sum_{S\in\mc{S}}\norm{\Lie_SW_{J_{22}}})+(\norm{\curl\underline{B_{J_{21}}}}_{L^\infty(\Omega)}+\sum_{S\in\mc{S}}\norm{\Lie_SB_{J_{21}}}_{L^\infty(\Omega)}\no\\
&\qquad+\norm{[g]_1}_{L^\infty(\Omega)}\norm{B_{J_{21}}}_{L^\infty(\Omega)})\norm{W_{J_{22}}}])))\no\\
&+ (\norm{\Lie_U^{J_1}\D \omega}_{L^\infty(\Omega)}+\norm{\Lie_B\Lie_U^{J_1}\D \omega}_{L^\infty(\Omega)})(\norm{\Lie_U^{J_2}W}+\norm{\Lie_B\Lie_U^{J_2}W})\no\\
&\qquad+\norm{\Lie_U^{J_1}\Big\{[(\dot{g}_{eb}-\omega_{eb})\D_a{\D_c x^k}-(\dot{g}_{ea}-\omega_{ea})\D_b{\D  x^k}]\frac{\D y^e}{\D x^k}\Big\}}_{L^\infty(\Omega)}\no\\
&\qquad\cdot(\norm{\Lie_U^{J_2}W}+\norm{\Lie_B\Lie_U^{J_2}W})+\norm{\Lie_U^{J_2}W}\no\\
&\qquad\cdot\lnorm{\Lie_B\Lie_U^{J_1}\Big\{[(\dot{g}_{eb}-\omega_{eb})\D_a{\D_c x^k}-(\dot{g}_{ea}-\omega_{ea})\D_b{\D  x^k}]\frac{\D y^e}{\D x^k}\Big\}}_{L^\infty(\Omega)}\Big).\no
\end{align}

We first consider the term  $|\curl[\Lie_U^I,\Lie_B^2]w|$ in the line labeled \eqref{Er1c.1}. It holds
\begin{align*}
[\Lie_U^I,\Lie_B^2]w_a=&[\Lie_U^I,\Lie_B]\Lie_B w_a+\Lie_B[\Lie_U^I,\Lie_B]w_a\no\\
=&\tilde{c}_{I}^{I_1I_2}[B^b_{I_1}\D_b\Lie_U^{I_2}\Lie_Bw_{a}+(\D_aB^c_{I_1})\Lie_U^{I_2}\Lie_Bw_{c}\no\\
&\qquad+\Lie_B[B^b_{I_1}\D_b\Lie_U^{I_2}w_{a}+(\D_aB^c_{I_1})\Lie_U^{I_2}w_{c}]]\no\\
=&\tilde{c}_{I}^{I_1I_2}[B^b_{I_1}\D_b\Lie_U^{I_2}\Lie_Bw_{a}+(\D_aB^c_{I_1})\Lie_U^{I_2}\Lie_Bw_{c}+\Lie_BB^b_{I_1}\D_b\Lie_U^{I_2}w_{a}\no\\
&\qquad+B^b_{I_1}\Lie_B\D_b\Lie_U^{I_2}w_{a}+\Lie_B(\D_aB^c_{I_1})\Lie_U^{I_2}w_{c}+(\D_aB^c_{I_1})\Lie_B\Lie_U^{I_2}w_{c}],
\end{align*}
which yields
\begin{align*}
&(\curl[\Lie_U^I,\Lie_B^2]w)_{ad}\no\\
=&\tilde{c}_{I}^{I_1I_2}[\D_aB^b_{I_1}\D_b\Lie_U^{I_2}\Lie_Bw_{d}-\D_dB^b_{I_1}\D_b\Lie_U^{I_2}\Lie_Bw_{a}+B^b_{I_1}\D_b(\curl\Lie_U^{I_2}\Lie_Bw)_{ad}\no\\
&+(\D_dB^c_{I_1})\D_a(\Lie_U^{I_2}\Lie_Bw_{c}+\Lie_B\Lie_U^{I_2}w_{c})-(\D_aB^c_{I_1})\D_d(\Lie_U^{I_2}\Lie_Bw_{c}+\Lie_B\Lie_U^{I_2}w_{c})\no\\
&+\D_a\Lie_BB^b_{I_1}\D_b\Lie_U^{I_2}w_{d}-\D_d\Lie_BB^b_{I_1}\D_b\Lie_U^{I_2}w_{a}+\D_aB^b_{I_1}\Lie_B\D_b\Lie_U^{I_2}w_{d}-\D_dB^b_{I_1}\Lie_B\D_b\Lie_U^{I_2}w_{a}\no\\
&+\Lie_BB^b_{I_1}\D_b(\curl\Lie_U^{I_2}w)_{ad}+B^b_{I_1}\D_a\Lie_B\D_b\Lie_U^{I_2}w_{d}-B^b_{I_1}\D_d\Lie_B\D_b\Lie_U^{I_2}w_{a}\no\\
&+(\D_a\Lie_B\D_dB^c_{I_1}-\D_d\Lie_B\D_aB^c_{I_1})\Lie_U^{I_2}w_{c}+(\Lie_B\D_dB^c_{I_1})\D_a\Lie_U^{I_2}w_{c}-(\Lie_B\D_aB^c_{I_1})\D_d\Lie_U^{I_2}w_{c}].
\end{align*}
Since 
\begin{align*}
[\Lie_B,\D_a]w_b=-\D_a\D_b B^cw_c,
\end{align*}
 one gets
\begin{align*}
&\abs{\curl[\Lie_U^I,\Lie_B^2]w}\no\\
\ls &K_1 \tilde{c}_{I}^{I_1I_2}[\abs{\D B_{I_1}}(|\D\Lie_U^{I_2}\Lie_Bw|+|\D\Lie_B\Lie_U^{I_2}w|+|\D^2B||\Lie_U^{I_2}w|)\no\\
&+|B_{I_1}||\D (\curl\Lie_U^{I_2}\Lie_Bw)|+|\Lie_BB_{I_1}||\D(\curl\Lie_U^{I_2}w)|+|B_{I_1}|(|\D^2\Lie_B\Lie_U^{I_2}w|\\
&+|\D^3B||\Lie_U^{I_2}w|+|\D^2B||\D\Lie_U^{I_2}w|)+(|\D\Lie_BB_{I_1}|+|\D^2B|| B_{I_1}|)|\D\Lie_U^{I_2}w|\\
&+(|\D^2\Lie_B B_{I_1}|+|\D^3B||B_{I_1}|+|\D^2B||\D B_{I_1}|)|\Lie_U^{I_2}w|].
\end{align*}
Due to 
\begin{align*}
\Lie_U^Jw_a=\Lie_U^J(g_{ab}W^b)=g_{ab}\Lie_U^JW^b+\tilde{c}_{J_1J_2}^J g_{ab}^{J_1}\Lie_U^{J_2}W^b,\quad g_{ab}^J=\Lie_U^J g_{ab},
\end{align*}
where the sum is over all $J_1+J_2=J$, and $\tilde{c}_{J_1J_2}^J=1$ for $|J_2|<|J|$, and $\tilde{c}_{J_1J_2}^J=0$ for $|J_2|=|J|$, 
from Lemma \ref{lem.11.3}, it follows that
\begin{align*}
|\Lie_U^{I_2}w|\ls&|g||\Lie_U^{I_2}W|+\tilde{c}_{I_{21}I_{22}}^{I_2}|g^{I_{21}}||\Lie_U^{I_{22}}W|, \\
|\D\Lie_U^{I_2}w|\ls&|\D g||\Lie_U^{I_2}W|+|g||\D\Lie_U^{I_2}W|+\tilde{c}_{I_{21}I_{22}}^{I_2}|\D g^{I_{21}}||\Lie_U^{I_{22}}W|+\tilde{c}_{I_{21}I_{22}}^{I_2}|g^{I_{21}}||\D\Lie_U^{I_{22}}W|\\
\ls &K_1 [|g|+|\D g|]\Big(|\curl\underline{\Lie_U^{I_2}W}|+\sum_{S\in\mc{S}}|\Lie_S \Lie_U^{I_2}W|+[g]_1|\Lie_U^{I_2}W|\Big)\\
&\!\!\!+K_1 \tilde{c}_{I_{21}I_{22}}^{I_2}[|g^{I_{21}}|+|\D g^{I_{21}}|]\Big(|\curl\underline{\Lie_U^{I_{22}}W}|+\sum_{S\in\mc{S}}|\Lie_S \Lie_U^{I_{22}}W|+[g]_1|\Lie_U^{I_{22}}W|\Big),
\end{align*}
and 
\begin{align*}
|\D\Lie_U^{I_2}\Lie_Bw_a|=& |\D\Lie_U^{I_2}(\Lie_Bg_{ab}W^b+g_{ab}\Lie_BW^b)|\\
\ls& \tilde{c}_{I_{21}I_{22}}^{I_2}(|\D \Lie_U^{I_{21}}\Lie_B g||\Lie_U^{I_{22}}W|+| \Lie_U^{I_{21}}\Lie_B g||\D\Lie_U^{I_{22}}W|\\
&+|\D \Lie_U^{I_{21}} g||\Lie_U^{I_{22}}\Lie_BW|+| \Lie_U^{I_{21}} g||\D\Lie_U^{I_{22}}\Lie_BW|)\\
\ls &K_1 \tilde{c}_{I_{21}I_{22}}^{I_2}(| \Lie_U^{I_{21}}\Lie_B g|+|\D \Lie_U^{I_{21}}\Lie_B g|)\\
&\qquad\cdot\Big(|\curl\underline{\Lie_U^{I_{22}}W}|+\sum_{S\in\mc{S}}|\Lie_S \Lie_U^{I_{22}}W|+[g]_1|\Lie_U^{I_{22}}W|\Big)\\
&\!\!\!+K_1 \tilde{c}_{I_{21}I_{22}}^{I_2}(| \Lie_U^{I_{21}} g|+|\D \Lie_U^{I_{21}} g|)\Big(|\curl\underline{\Lie_U^{I_{22}}\Lie_BW}|+\sum_{S\in\mc{S}}|\Lie_S \Lie_U^{I_{22}}W|\\
&\!\!\!\!\!+[g]_1\tilde{c}_{I_{221}I_{222}}^{I_{22}}\Big[|\Lie_U^{I_{221}}B|\Big(|\curl\underline{\Lie_U^{I_{222}}W}|+\sum_{S\in\mc{S}}|\Lie_S \Lie_U^{I_{222}}W|+[g]_1|\Lie_U^{I_{222}}W|\Big)\\
&\qquad+\Big(|\curl\underline{\Lie_U^{I_{221}}B}|+\sum_{S\in\mc{S}}|\Lie_S \Lie_U^{I_{221}}B|\Big)|\Lie_U^{I_{222}}W|\Big]\Big),
\end{align*}
since
\begin{align}\label{TIB}
\abs{[\Lie_T^I,\Lie_B]W}\ls& \tilde{c}_{I}^{I_1I_2}[\abs{B_{I_1}}\abs{\D W_{I_2}}+\abs{\D B_{I_1}}\abs{W_{I_2}}]\\
\ls &\tilde{c}_{I}^{I_1I_2}K_1\Bigg[\abs{B_{I_1}}\left( \sum_{S\in\mc{R}}|\Lie_S W_{I_2}|+|W_{I_2}|\right)+\abs{\D B_{I_1}}\abs{W_{I_2}}\Bigg].\no
\end{align}

Now, we have to express the term, like $|\curl\underline{W_{I}}|=|\curl\underline{\Lie_U^{I}W}|$ in the above inequality and in the line labeled \eqref{Er1c.2} and other lines, in term of $w$. By Lemma \ref{lem.11.3}, we have
\begin{align*}
&|\curl\underline{\Lie_U^{I}W}_a|=|\curl g_{ab}\Lie_U^{I}W^b|\\
\ls& |\curl \Lie_U^{I}w_a|+\tilde{c}^{I}_{I_1I_2}|\curl (g^{I_1}_{ab}\Lie_U^{I_2}W^b) |\\
\ls & |\curl \Lie_U^{I}w_a|+\tilde{c}^{I}_{I_1I_2}|\D_d (g^{I_1}_{ab}\Lie_U^{I_2}W^b)-\D_a(g^{I_1}_{db}\Lie_U^{I_2}W^b) |\\
\ls &|\curl \Lie_U^{I}w_a|+2\tilde{c}^{I}_{I_1I_2}(|\D g^{I_1}||\Lie_U^{I_2}W|+|g^{I_1}||\D \Lie_U^{I_2}W|)\\
\ls &|\curl \Lie_U^{I}w_a|+K_1 \tilde{c}^{I}_{I_1I_2}(|\D g^{I_1}|+|g^{I_1}|)\Big(|\curl\underline{\Lie_U^{I_{2}}W}|+\sum_{S\in\mc{S}}|\Lie_S \Lie_U^{I_{2}}W|+[g]_1|\Lie_U^{I_{2}}W|\Big).
\end{align*}
The term $|\curl\underline{\Lie_U^{I}\Lie_BW}|$ in \eqref{Er1c.5} can be estimated as a similar argument as above by regarding $B$ as a tangential vector field of form $U$.
By \eqref{comm.LieUJDa}, for the term $[\Lie_U^{J_2},\D]W$ in \eqref{Er1c.3}, we get
\begin{align*}
|[\Lie_U^{J_2},\D]W|\ls &|W||\Lie_U^{J_2-J_{2,|J_2|}}\D^2U_{J_{2,|J_2|}}|+\sum_{J_2=I_1+I_2+I_3\atop |I_3|=1}\sgn(|I_1|)|\Lie_U^{I_2}\D^2U_{I_3}||\Lie_U^{I_1}W|,
\end{align*}
and a similar estimate holds for the term $[\Lie_U^{J_2},\D]\dot{W}$ in \eqref{Er1c.4}. Similarly, for the term $[\Lie_U^{J_2},\D]\Lie_BW$ in \eqref{Er1c.5}, we have with the help of \eqref{TIB}
\begin{align*}
&|[\Lie_U^{J_2},\D]\Lie_BW|\ls |\Lie_BW||\Lie_U^{J_2-J_{2,|J_2|}}\D^2U_{J_{2,|J_2|}}|+\sum_{J_2=I_1+I_2+I_3\atop |I_3|=1}\sgn(|I_1|)|\Lie_U^{I_2}\D^2U_{I_3}|\\
&\cdot\bigg[|\Lie_B\Lie_U^{I_1}W|+K_1 \tilde{c}_{I}^{I_{11}I_{12}}\abs{B_{I_{11}}}\Big(|\curl \underline{W_{I_{12}}}|+\sum_{S\in\mc{S}}|\Lie_S W_{I_{12}}|+[g]_1|W_{I_{12}}|\Big)\no\\
&\qquad\qquad\quad+\Big( |\curl \underline{B_{I_{11}}}|+ \sum_{S\in\mc{S}}|\Lie_S B_{I_{11}}|\Big)\abs{W_{I_{12}}}\bigg].
\end{align*}
For the term $|\Lie_S\Lie_U^{J_2}\Lie_B W|$, we can use \eqref{TIB} to get estimates.

For convenience, we introduce some new norms and notation.
\begin{definition}
	For any family $\mc{V}$ of our families  of vector fields, let
\begin{align*}
	\norm{W}_r^{\mc{V}}=&\norm{W(t)}_{\mc{V}^r(\Omega)}=\sum_{|I|\ls r,\, T\in\mc{V}} \left(\int_{\Omega} |\Lie_U^I W(t,y)|^2 dy\right)^{1/2},\\	\norm{W}_{r,B}^{\mc{V}}=&\norm{W(t)}_{\mc{V}_B^r(\Omega)}=\sum_{|I|\ls r,\, T\in\mc{V}} \left(\int_{\Omega} |\Lie_B\Lie_U^I W(t,y)|^2 dy\right)^{1/2},
\end{align*}
and
\begin{align}\label{Cr}
		\begin{aligned}
	C_r^{\mc{V}}=&\sum_{|J|\ls r-1,\, J\in\mc{V}} \left(\int_{\Omega} (|\curl \Lie_U^J\dot{w}|^2+|\curl \Lie_B\Lie_U^Jw|^2+|\curl \Lie_U^Jw|^2) dy\right)^{1/2},\\
	C_0^{\mc{V}}=&0.
	\end{aligned}
\end{align}
\end{definition}
Note that the norm $\norm{W(t)}_{\mc{R}^r(\Omega)}$ is equivalent to the usual Sobolev norm in the Lagrangian coordinates.

\begin{definition}
For $\mc{V}$ any  of our families of vector fields and $\beta$ a function, a $1$-form, a $2$-form, or a vector field, let $|\beta|_s^{\mc{V}}$ be as in Definition \ref{def.betasV} and set
\begin{align*}
\begin{aligned}
\norm{\beta}_{s,\infty}^{\mc{V}}=&\norm{|\beta|_s^{\mc{V}}}_{L^\infty(\Omega)},\\
[[g]]_{s,\infty}^{\mc{V}}=&\sum_{s_1+\cdots+s_k\ls s,\, s_i\gs 1}\norm{g}_{s_1,\infty}^{\mc{V}}\cdots \norm{g}_{s_k,\infty}^{\mc{V}}, \quad [[g]]_{0,\infty}^{\mc{V}}=1,
\end{aligned}
\end{align*}
where the sum is over all combinations with $s_i\gs 1$. Furthermore, let
\begin{align}\label{mr}
\begin{aligned}
m_r^{\mc{V}}=&[[g]]_{r,\infty}^{\mc{V}},\\ 
\dot{m}_r^{\mc{V}}=&\sum_{s+u\ls r} [[g]]_{s,\infty}^{\mc{V}}(\norm{\dot{g}}_{u,\infty}^{\mc{V}}+\norm{\Lie_B g}_{u,\infty}^{\mc{V}}+\norm{\Lie_B^2 g}_{u,\infty}^{\mc{V}}+\norm{\omega}_{u,\infty}^{\mc{V}}),\\
\bar{m}_r^{\mc{V}}=&\sum_{s\ls r}\left(\norm{B}_{s+2,\infty}^{\mc{V}}+\norm{\D x}_{s,\infty}^{\mc{V}}+\norm{\D^2 x}_{s,\infty}^{\mc{V}}+\lnorm{\frac{\D y}{\D x}}_{s,\infty}^{\mc{V}}\right).
\end{aligned}
\end{align}
\end{definition}

Let $F_{r,\curl}^{\mc{U}}=\norm{\curl \underline{F}}_{\mc{U}^{r-1}(\Omega)}$. Then, it follows from the above arguments in this subsection that
\begin{align}\label{CrU}
\abs{\frac{d}{dt}C_r^{\mc{U}}}\ls K_1 \sum_{s=0}^r (\dot{m}_{r-s}^{\mc{U}}+\bar{m}_{r-s}^{\mc{U}})(C_s^{\mc{U}}+E_s^{\mc{T}})+F_{r,\curl}^{\mc{U}},
\end{align}
where $E_s^{\mc{T}}$ is the energy of the tangential derivatives defined in \eqref{EsV}. Here, we note that the same inequalities hold with $\mc{U}$ and $\mc{T}$ replaced by $\mc{R}$ and $\mc{S}$, respectively. Thus, by the Gronwall inequality, we obtain for $r\gs 1$
\begin{align*}
C_r^{\mc{U}} \ls & K_1e^{\int_0^t K_1(\dot{m}_{0}^{\mc{U}}+\bar{m}_{0}^{\mc{U}})d\tau}\int_0^t\Big(\sgn(r-1)\sum_{s=1}^{r-1}(\dot{m}_{r-s}^{\mc{U}}+\bar{m}_{r-s}^{\mc{U}})C_s^{\mc{U}}\no\\
&\qquad\qquad+\sum_{s=0}^{r}(\dot{m}_{r-s}^{\mc{U}}+\bar{m}_{r-s}^{\mc{U}})E_s^{\mc{T}}+F_{r,\curl}^{\mc{U}}\Big) d\tau.
\end{align*}
Since we have already proved a bound for $E_s^{\mc{T}}$ in Proposition \ref{prop.ErT}, it inductively follows that $C_r^{\mc{U}}$ is bounded. By Lemma \ref{lem.divcurl}, we obtain
\begin{align}\label{L12.20}
\norm{W(t)}_{\mc{U}^r(\Omega)}+\norm{\dot{W}(t)}_{\mc{U}^r(\Omega)}+\norm{\Lie_BW(t)}_{\mc{U}^r(\Omega)}\ls K_1\sum_{s=0}^r m_{r-s}^{\mc{U}}(C_s^{\mc{U}}+E_s^{\mc{T}}).
\end{align}
Therefore, we have the following estimates.
\begin{proposition}\label{prop.normal}
	Suppose that $x,P\in C^{r+2}([0,T]\times\Omega)$, $B\in C^{r+2}(\Omega)$, $P|_{\Gamma}=0$, $\nb_NP|_\Gamma\ls -c_0<0$, $B^aN_a|_\Gamma=0$ and $\dv V=0$, where $V=D_t x$. Then, there is a constant $C=C(x,P,B)$ depending only on the norm of $(x,P,B)$, a lower bound for $c_0$, and an upper bound for $T$ such that if $E_s^{\mc{T}}(0)=C_s^{\mc{U}}(0)=0$ for $s\ls r$, then
	\begin{align*}
	\norm{W}_r^{\mc{U}}+\norm{\dot{W}}_r^{\mc{U}}+\norm{\Lie_BW}_r^{\mc{U}}+E_r^{\mc{T}}\ls C\int_0^t\norm{F}_r^{\mc{U}}d\tau, \quad \text{ for } t\in [0,T].
	\end{align*}
\end{proposition}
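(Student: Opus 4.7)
The plan is to proceed by induction on $r$, coupling the curl bound \eqref{CrU} with a higher-order tangential energy estimate and then closing via the div-curl decomposition \eqref{L12.20}. The base case $r=0$ reduces to Proposition \ref{prop.E0} (with $W_0=W_1=0$, so that $E_0\ls C\int_0^t\norm{F}d\tau$, and $C_0^{\mc{U}}=0$ by convention). Assume the conclusion holds for every $s\ls r-1$.

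The first task is to extend Proposition \ref{prop.E1S} from first-order tangential derivatives to arbitrary order for the whole family $\mc{T}=\mc{S}\cup\{D_t\}$, producing an analog of Proposition \ref{prop.ErT} for $\bar{E}_r^{\mc{T}}$. Applying $\Lie_T^I$ with $T\in\mc{T}$ to the one-form equation \eqref{1formeq} yields the commuted equation \eqref{Lieeq2}. Forming the energy $E_I$ and differentiating in time reproduces the calculations of Section~\ref{sec.tang} verbatim, the only new feature being iterated commutators of the form $[T_1,[T_2,\ldots,[T_k,B]]]$ arising from the $\Lie_B^2$ term, which remain in $\mc{S}$ by the same divergence-free computation as in Section~\ref{sec.tang}. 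All cross terms involving $\A_{I_1}$, $\dot{\mc{G}}_{I_1}$, $\mc{C}_{I_1}$, $\mc{G}_{I_1}$ with $|I_2|<|I|$ are controlled by $\bar{E}_{r-1}^{\mc{T}}$ (inductive hypothesis) together with $L^\infty$ norms of the coefficients. The dangerous top-order term $\langle \A_I W,\dot{W}_I\rangle$ is absorbed by introducing the auxiliary quantity $D_I=2\tilde{c}_I^{I_1I_2}\langle W_I,\A_{I_1}W_{I_2}\rangle$ exactly as in Section~\ref{sec.tang.r}; the bound \eqref{est.A} applied to $\A_{P_I}$ requires the weighted norm $\hnorm{\D P}_{r,\infty,P^{-1}}^{\mc{T}}$, which is finite thanks to $\nb_N P\ls -c_0<0$. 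Summing over $|I|\ls r$ and invoking Gronwall produces
\begin{equation*}
\bar{E}_r^{\mc{T}}(t)\ls C\int_0^t\Bigl(F_r^{\mc{T}}(\tau)+C_r^{\mc{U}}(\tau)\Bigr)d\tau+C\int_0^t\bar{E}_{r-1}^{\mc{T}}(\tau)d\tau,
\end{equation*}
where the dependence on $C_r^{\mc{U}}$ comes through $\norm{\D W}$ and $\norm{\D\Lie_B W}$ terms that are expressed via Lemma \ref{lem.11.3} in the commutator computations.

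Next we couple this with \eqref{CrU}. By the inductive hypothesis the contributions of $E_s^{\mc{T}}$ and $C_s^{\mc{U}}$ for $s\ls r-1$ in both inequalities are already bounded by $C\int_0^t\norm{F}_{r-1}^{\mc{U}}d\tau\ls C\int_0^t\norm{F}_r^{\mc{U}}d\tau$. Setting $G_r(t):=\bar{E}_r^{\mc{T}}(t)+C_r^{\mc{U}}(t)$ and using $F_r^{\mc{T}}+F_{r,\curl}^{\mc{U}}\ls C\norm{F}_r^{\mc{U}}$, the two inequalities combine to
\begin{equation*}
G_r(t)\ls C\int_0^t G_r(\tau)d\tau+C\int_0^t\norm{F}_r^{\mc{U}}(\tau)d\tau,
\end{equation*}
which Gronwall closes to give $G_r(t)\ls C\int_0^t\norm{F}_r^{\mc{U}}d\tau$. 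Substituting this into \eqref{L12.20} and using $m_{r-s}^{\mc{U}}\ls C$ on $[0,T]$ yields the desired bound on $\norm{W}_r^{\mc{U}}+\norm{\dot{W}}_r^{\mc{U}}+\norm{\Lie_B W}_r^{\mc{U}}$, and the statement on $E_r^{\mc{T}}$ is part of $G_r$.

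The main obstacle is the bookkeeping in the first step: one must verify that every commutator term produced by $\Lie_T^I$ acting on the nonlocal operators $\A$, $\dot{\mc{G}}$, $\mc{C}$, $\mc{X}$ can either be expressed via lower multi-indices (falling under the inductive hypothesis) or, when of top order, be reduced by integration by parts using the auxiliary quantity $D_I$ and the weighted bound \eqref{est.A}. The Rayleigh–Taylor-type condition $\nb_N P\ls -c_0<0$ is the critical ingredient here, ensuring both the positivity of $\A$ and the finiteness of the weighted norm $\norm{\D P}_{r,\infty,P^{-1}}^{\mc{T}}$ that dominates the top-order contributions.
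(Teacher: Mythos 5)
Your proposal follows the paper's strategy: extend the tangential energy estimate to order $r$ for the family $\mc{T}$, feed it into the curl inequality \eqref{CrU}, close by Gronwall, and convert via the div-curl decomposition \eqref{L12.20}; so this is essentially the paper's argument, made explicit. You also correctly identify that the paper's citation of Proposition \ref{prop.ErT} for a bound on $E_s^{\mc{T}}$ is imprecise (that proposition is stated only for $\mc{B}=\{B,D_t\}$), and that the needed higher-order extension of Proposition \ref{prop.E1S} is precisely what you describe.

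The one claim you should retract is the asserted dependence of $\bar{E}_r^{\mc{T}}$ on $C_r^{\mc{U}}$ through $\norm{\D W}$ and $\norm{\D\Lie_B W}$ terms. Once the $\A$-commutator $\langle\A_{I_1}W_{I_2},\dot W_I\rangle$ is absorbed via the auxiliary energy $D_I$ and the weighted bound $\lnorm{\nb_N P_{I_1}/\nb_N P}_{L^\infty}$ (as you yourself set up), no full derivative of $W$ survives in the homogeneous-data tangential estimate: the remaining commutators ($\mc{G}_{I_1}\ddot W_{I_2}$, $[\Lie_T^I,\Lie_B^2]W$, the $\mc{X}$-commutators) involve only tangential derivatives of $W$ and $L^\infty$-norms of coefficients, since $\mc{S}_0$ is closed under $[\,\cdot\,,B]$. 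Full derivatives of $W$ only enter the curl estimate \eqref{est.Ecurl} via Lemma \ref{lem.11.3}, so the coupling runs one way: $C_r^{\mc{U}}$ depends on $E_s^{\mc{T}}$ but not conversely, which is exactly why the paper can close $E_r^{\mc{T}}$ first and $C_r^{\mc{U}}$ second rather than jointly. (A genuine two-way coupling through $\norm{\D W_{I_k}}$ does appear in Section \ref{sec.inhomo} for inhomogeneous data, where the $\mc{A}$-commutator is bounded more crudely, and that may be where you picked up the idea.) Your coupled Gronwall system $G_r=\bar{E}_r^{\mc{T}}+C_r^{\mc{U}}$ still closes with the redundant term, so no harm is done, but the dependence you invoke is not actually present in this proposition.
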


\section{The smoothed-out equation and existence of weak solutions}\label{sec.weak}

\subsection{The smoothed-out normal operator}
In order to prove the existence of solutions, we need to replace the normal operator $\A$ by a sequence $\A^\eps$ of bounded symmetric and positive operators that converge to $\A$ as $\eps\to 0$. 

Let $\rho=\rho(d)$ be a smooth function of the distance $d=d(y)=\dist(y,\Gamma)$ such that
\begin{align*}
\rho'\gs 0, \quad \rho(d)=d \text{ for } d\ls \frac{1}{4}, \quad \text{ and } \rho(d)=\frac{1}{2} \text{ for } d\gs \frac{3}{4}.
\end{align*}
Let $\chi(\rho)$ be a smooth function such that
\begin{align*}
\chi'(\rho)\gs 0, \quad \chi(\rho)=0 \text{ for } \rho\ls \frac{1}{4}, \quad \text{ and } \chi(\rho)=1 \text{ for } \rho\gs \frac{3}{4}.
\end{align*}
For a function $f$ vanishing on the boundary, we define
\begin{align*}
\A^\eps_f W^a=\Pdv(-g^{ab}\chi_\eps(\rho)\D_b(f\rho^{-1}(\D_c\rho)W^c)),
\end{align*}
where $\chi_\eps\eps(\rho)=\chi(\rho/\eps)$. The integration by parts gives
\begin{align}\label{UAepsW}
\langle U,\A^\eps_fW\rangle=\int_{\Omega} f\rho^{-1}\chi_\eps'(\rho)(U^a\D_a\rho)(W^b\D_b\rho)dy,
\end{align}
which yields the symmetry of $\A^\eps_f$. In particular, $\A^\eps=\A^\eps_P$ is positive if $P\gs 0$, at least close to the boundary, i.e.,
\begin{align*}
\langle W,\A^\eps W\rangle \gs 0.
\end{align*}
We have another expression for $\A^\eps_f$: 
\begin{align*}
\A^\eps_f W^a=\Pdv(g^{ab}\chi_\eps'(\rho)(\D_b\rho)f\rho^{-1}(\D_c\rho)W^c).
\end{align*}
Since the projection is continuous on $H^r(\Omega)$, if the metric and pressure are sufficiently regular, one gets, as in \cite{L1,L2}, that
\begin{align}\label{Hrest}
\sum_{j=0}^k \norm{D_t^j \A^\eps W}_{H^r(\Omega)}\ls C_{\eps,r,k}\sum_{j=0}^k\norm{D_t^jW}_{H^r(\Omega)}.
\end{align} 
Moreover, we have
\begin{align*}
\norm{\A^\eps W-\A W}^2=&\langle \A^\eps W-\A W,\A^\eps W-\A W\rangle\\
=&\langle \A^\eps W-\A W,\A^\eps W\rangle-\langle \A^\eps W-\A W,\A W\rangle\\
=&-\int_{\Omega} (\A^\eps W^a-\A W^a)\chi_\eps(\rho)\D_a(P\rho^{-1}(\D_c\rho)W^c)dy\\
&+\int_{\Omega}(\A^\eps W^a-\A W^a)\D_a(\D_cPW^c)dy\\
=&-\int_{\Omega} (\A^\eps W^a-\A W^a)\chi_\eps(\rho)\D_a[(P\rho^{-1}\D_c\rho-\D_cP)W^c]dy\\
&+\int_{\Omega}(\A^\eps W^a-\A W^a)(1-\chi_\eps(\rho))\D_a(\D_cPW^c)dy\\
=&\int_{\Omega} (\A^\eps W^a-\A W^a)\chi_\eps'(\rho)\D_a\rho[(P\rho^{-1}\D_c\rho-\D_cP)W^c]dy\\
&+\int_{\Omega}(\A^\eps W^a-\A W^a)(1-\chi_\eps(\rho))\D_a(\D_cPW^c)dy
\end{align*}
due to $P\rho^{-1}\D_c\rho=\D_cP$ on the boundary, which yields 
\begin{align*}
\norm{\A^\eps W-\A W}\ls& \norm{\chi_\eps'(\rho)}_{L^\infty(\Omega)}\norm{\D_a\rho(P\rho^{-1}\D_c\rho-\D_cP)W^c}\\
&+\norm{(1-\chi_\eps(\rho))}_{L^\infty(\Omega)}\norm{\D_a(\D_cPW^c)}\to 0, \text{ as } \eps\to 0,
\end{align*}
since $\chi_\eps'(\rho)\to 0$ and $\chi_\eps(\rho)\to 1$ in $L^\infty(\Omega)$ as $\eps\to 0$. Thus, we obtain 
\begin{align}\label{Aepslimit}
\A^\eps U\to \A U \text{ in } L^2(\Omega),\quad \text{if } U\in H^1(\Omega).
\end{align}

As in \eqref{est.A}, it holds
\begin{align}\label{est.Aeps}
\abs{\langle U,\A_{fP}^\eps W\rangle}\ls & \langle U,\A_{fP}^\eps U\rangle^{1/2}\langle W,\A_{fP}^\eps W\rangle^{1/2}\no\\
\ls &\norm{f}_{L^\infty(\Omega\setminus \Omega^\eps)}\langle U,\A^\eps U\rangle^{1/2}\langle W,\A^\eps W\rangle^{1/2},
\end{align}
where 
\begin{align}\label{Omegaeps}
\Omega^\eps=\{y\in\Omega: \dist(y,\Gamma)>\eps\}.
\end{align}
In fact, it suffices to take the supremum over the set where $d(y)\ls \eps$ since $\chi_\eps'=0$ when $d(y)\gs \eps$. The only difference with \eqref{est.A} is that the supremum is over a small neighborhood of the boundary instead of on the boundary. Since $P$ vanishes on the boundary, $P>0$ in the interior, and $\nb_NP\ls -c_0<0$ on the boundary, it follows that $\dot{P}=D_tP$ vanishes on the boundary and $\dot{P}/P$ is a smooth function. Let $\dot{\A}^\eps=\A_{\dot{P}}^\eps$ be the time derivative of the operator $\A^\eps$, which satisfies
\begin{align}\label{dotAeps}
\abs{\langle W,\dot{\A}^\eps W\rangle}\ls \norm{\dot{P}/P}_{L^\infty(\Omega\setminus \Omega^\eps)}\langle W,\A^\eps W\rangle.
\end{align}

The commutators between $\A_f^\eps$ and the Lie derivatives $\Lie_T$ with respect to tangential vector fields $T$ are basically the same as for $\A$. Since $Td=0$ for $T\in\mc{T}_0=\mc{S}_0\cup\{D_t\}$, we have
\begin{align}\label{comm.AL}
\Pdv(g^{ca}\Lie_T(g_{ab}\A_f^\eps W^b))=\A_f^\eps \Lie_T W^c+\A_{Tf}^\eps W^c.
\end{align}
In order to get additional regularity in the interior, we include the vector fields $\mc{S}_1$ that span the tangent space in the interior. The vector fields in $\mc{S}_1$ satisfy $S\rho=\Lie_S\rho=0$ when $d\ls d_0/2$. Due to $\chi_\eps'(\rho)=0$ when $d\gs \eps$, the above relation \eqref{comm.AL} holds for these as well if we assume that $\eps\ls d_0/2$.

Now, it remains to estimate the curl of $\A^\eps$. Although the curl of $\A$ vanishes, it is not the case for the curl of $\A^\eps$. However, it will vanish away from the boundary. Let $(\underline{\A}^\eps W)_a=g_{ab}\A^\eps W^b$,  we have
\begin{align*}
(\underline{\A}^\eps W)_a=&g_{ab}\Pdv(-g^{bd}\chi_\eps(\rho)\D_d(P\rho^{-1}(\D_c\rho)W^c))\no\\
=&-\chi_\eps(\rho)\D_a(P\rho^{-1}(\D_c\rho)W^c)-\D_a q_1,
\end{align*}
for some function $q_1$ vanishing on the boundary and determined so that the divergence vanishes. Then, when $d(y)\gs \eps$, we get $\chi_\eps'(\rho)=0$ and
\begin{align}\label{curlAeps}
(\curl \underline{\A}^\eps W)_{ab}=&\D_a(\underline{\A}^\eps W)_b-\D_b(\underline{\A}^\eps W)_a\\
=&-\D_a(\chi_\eps(\rho)\D_b(P\rho^{-1}(\D_c\rho)W^c))+\D_b(\chi_\eps(\rho)\D_a(P\rho^{-1}(\D_c\rho)W^c))\no\\
=&-\chi_\eps'(\rho)[\D_a\rho\D_b(P\rho^{-1}(\D_c\rho)W^c)-\D_b\rho\D_a(P\rho^{-1}(\D_c\rho)W^c)]\no\\
=&0.\no
\end{align}

\subsection{The smoothed-out equation and existence of weak solutions} \label{sec.weaksol}

We introduce the following $\eps$ smoothed-out linear equation
\begin{subequations}\label{epsIVP}
	\begin{align}
	&\ddot{W}_\eps^a-\Lie_B^2W_\eps^a+\A^\eps W_\eps^a+\dot{\mc{G}}\dot{W}_\eps^a-\mc{C}\dot{W}_\eps^a+\mc{X}\Lie_BW_\eps^a=F^a,\label{eps.IVP}\\
	&W_\eps|_{t=0}=0,\quad \dot{W}_\eps|_{t=0}=0.\label{eps.data}
	\end{align}
\end{subequations}
It is a wave equation with variable coefficients, one can get the existence of weak solutions in $H^1(\Omega)$ by standard methods and noticing that $B^aN_a=0$ on the boundary, or in $H^r(\Omega)$ by \eqref{Hrest}, since all operators are bounded and $\Lie_B$ can be regarded as the first-order derivative with respect to spatial variables.

In order to obtain the additional regularity in time as well, we need to apply more time derivatives using \eqref{Hrest} and \eqref{PdvHr}, the initial data for these vanish as well since we constructed $F$ in \eqref{epsIVP} vanishing to any given order. If the initial data, encoded in $F$, are smooth, we hence have a smooth solution of the $\eps$ approximate linear equation. 

We will show that $W_\eps\to W$ weakly in $L^2$, where $W\in H^r(\Omega)$ for some large $r$. From the weak convergence, it will follow that $W$ is a weak solution, and then from the additional regularity of $W$, we can obtain that it is indeed a classical solution; hence the a priori bounds in the earlier section hold.

The norm of $\A^\eps$ tends to infinity as $\eps\to 0$, but we can include it in the energy because it is a positive operator. The energy will be the same as before with $\A$ replaced by $\A^\eps$, so \eqref{energy} becomes
\begin{align}\label{energyeps}
E^\eps(t)=\langle \dot{W}_\eps, \dot{W}_\eps\rangle+\langle (\A^\eps +I)W_\eps,W_\eps\rangle+\langle \Lie_BW_\eps,\Lie_BW_\eps\rangle.
\end{align}
Since $D_td=0$, it follows from taking the time derivative of \eqref{UAepsW}, with $f=P$, that
\begin{align*}
\frac{d}{dt}\langle \A^\eps W_\eps,W_\eps\rangle=2\langle \A^\eps W_\eps,\dot{W}_\eps\rangle+\langle \A^\eps_{\dot{P}} W_\eps,W_\eps\rangle,
\end{align*}
where the last term is bounded by \eqref{dotAeps}. Thus, by \eqref{dotE0}, one has
\begin{align*}
|\dot{E}^\eps|\ls \left(1+\lnorm{\frac{\dot{P}}{ P}}_{L^\infty(\Omega)}+2\norm{\dot{g}}_{L^\infty(\Omega)}+2\norm{g}_{L^\infty(\Omega)}\norm{\D B}_{L^\infty(\Omega)}\right)E^\eps+2\sqrt{E^\eps}\norm{F},
\end{align*}
from which we obtain a uniform bound for $t\in[0,T]$ independent of $\eps$, i.e., $E^\eps(t)\ls C$.

Since $\norm{W_\eps}\ls C$, we can choose a subsequence $W_{\eps_n}\rightharpoonup W$ weakly in the inner product. Now, we show that the limit $W$ is a weak solution of the equation. Multiplying \eqref{eps.IVP} by a smooth divergence-free vector field $U$ that vanished for $t\gs T$ and integrating by parts, we have
\begin{align*}
&\int_0^T \int_{\Omega}g_{ab}U^bF^a dydt\\
=&\int_0^T\int_{\Omega} g_{ab}(\ddot{W}_\eps^a-\Lie_B^2W_\eps^a+\A^\eps W_\eps^a+\dot{\mc{G}}\dot{W}_\eps^a-\mc{C}\dot{W}_\eps^a+\mc{X}\Lie_BW_\eps^a) U^b dydt\no\\
=&\int_0^T\int_{\Omega} (D_t(g_{ab}\dot{W}_\eps^a) U^b-g_{ab}\Lie_B^2W_\eps^a U^b+\chi_\eps'(\rho)(\D_b\rho)P\rho^{-1}(\D_c\rho)W^c U^b-\omega_{bc}\dot{W}_\eps^c U^b\\
&-2\delta_{il}\D_b x^lB\D_cx^i\Lie_BW_\eps^c U^b-2g_{ab}\D_c B^a\Lie_BW_\eps^c U^b) dydt\\
=&-\int_0^T\int_{\Omega}g_{ab}\dot{W}_\eps^a\dot{U}^b dydt+\int_0^T \int_{\Omega}((\Lie_Bg_{ab})U^b+g_{ab}\Lie_BU^b)\Lie_B W_\eps^a  dydt\\
&+\int_0^T \int_{\Omega} g_{ab}\A^\eps U^b W_\eps^a  dydt+\int_0^T \int_{\Omega}(\dot{\omega}_{bc}U^b+\omega_{bc}\dot{U}^b)W_\eps^c dydt\\
&+2\int_0^T \int_{\Omega}\delta_{il}\D_b x^lB\D_cx^iW_\eps^c \Lie_BU^b dydt+2\int_0^T \int_{\Omega}\Lie_B(\delta_{il}\D_b x^lB\D_cx^i)W_\eps^c U^b dydt\\
&+2\int_0^T \int_{\Omega}\Lie_B(g_{ab}\D_c B^a)U^bW_\eps^c dydt+2\int_0^T \int_{\Omega}g_{ab}\D_c B^a\Lie_BU^bW_\eps^c dydt\\
=&\int_0^T\int_{\Omega}g_{ab}(\ddot{U}^b-\Lie_B^2U^b+\A^\eps U^b +\dot{\mc{G}}\dot{U}^b-\mc{C}\dot{U}^b+\mc{X}\Lie_BU^b) W_\eps^a  dydt\\
&+\int_0^T \int_{\Omega}\dot{\omega}_{bc}U^bW_\eps^c dydt+\int_0^T \int_{\Omega}\Lie_B(\delta_{il}\D_b x^lB\D_cx^i-\delta_{il}B\D_b x^l\D_cx^i)U^bW_\eps^c  dydt\\
&+\int_0^T \int_{\Omega}\Lie_B(g_{ab}\D_c B^a-g_{ac}\D_b B^a)U^bW_\eps^c dydt.
\end{align*}
From \eqref{Aepslimit}, we know that $\A^\eps U$ converges to $\A U$ strongly in the norm if $U\in H^1$. Because $W_{\eps_n}\rightharpoonup W$ weakly, this proves that we have a weak solution $W$ of the equation
\begin{align*}
&\int_0^T\int_{\Omega}g_{ab}(\ddot{U}^b-\Lie_B^2U^b+\A U^b +\dot{\mc{G}}\dot{U}^b-\mc{C}\dot{U}^b+\mc{X}\Lie_BU^b)  dydt\no\\
&\quad+\int_0^T \int_{\Omega}\dot{\omega}_{bc}U^bW^c dydt+\int_0^T \int_{\Omega}\Lie_B(\delta_{il}\D_b x^lB\D_cx^i-\delta_{il}B\D_b x^l\D_cx^i)U^bW^c  dydt\no\\
&\quad+\int_0^T \int_{\Omega}\Lie_B(g_{ab}\D_c B^a-g_{ac}\D_b B^a)U^bW^c dydt=\int_0^T \int_{\Omega}g_{ab}U^bF^a dydt
\end{align*}
for any smooth divergence-free vector field $U$ that vanishes for $t\gs T$. Moreover, due to $\dv W_\eps=0$, we get
\begin{align*}
\int_0^T\int_{\Omega}(\D_aq)W_\eps^a dydt=0
\end{align*}
for any smooth $q$ that vanishes on the boundary and thus
\begin{align}\label{DqW}
\int_0^T\int_{\Omega}(\D_aq)W^a dydt=0.
\end{align}
Therefore, $W$ is weakly divergence-free.

\section{Existence of smooth solutions for the linearized equation} \label{sec.exist}

In order to show that $W$ is divergence-free classical solution, we need to prove the additional regularity, i.e., $W,\dot{W}\in H^r(\Omega)$ for any $r\gs 0$. Then, the integration by parts for \eqref{DqW} yields
\begin{align*}
\int_0^T\int_{\Omega}q\D_aW^a dydt=0
\end{align*} 
for any smooth function $q$ that vanishes on the boundary. Thus, $W$ is divergence-free. 

Moreover,
\begin{align}\label{sol}
&\int_0^T\int_{\Omega} g_{ab}U^b (\ddot{W}^a-\Lie_B^2W^a+\A W^a+\dot{\mc{G}}\dot{W}^a-\mc{C}\dot{W}^a+\mc{X}\Lie_BW^a) dydt\\
&\qquad=\int_0^T \int_{\Omega}g_{ab}U^bF^a dydt\no
\end{align}
for any smooth, divergence-free vector field $U$ that vanished for $t\gs T$. Since $W$ is divergence-free, it follows that $\ddot{W}^a-\Lie_B^2W^a+\A W^a+\dot{\mc{G}}\dot{W}^a-\mc{C}\dot{W}^a+\mc{X}\Lie_BW^a$ is divergence-free. By construction, $F$ is also divergence-free, it follows that \eqref{sol} holds for any smooth vector field $U$ that vanishes for $t\gs T$. Thus, we conclude that 
\begin{align*}
\ddot{W}^a-\Lie_B^2W^a+\A W^a+\dot{\mc{G}}\dot{W}^a-\mc{C}\dot{W}^a+\mc{X}\Lie_BW^a=F^a, \quad \dv W=0.
\end{align*}

Therefore, it only remains to prove that $W\in H^r(\Omega)$. We must show that we have uniform bounds for the $\eps$ smoothed-out equation similar to the a priori bounds for the linearized equation. The uniform tangential bounds for the $\eps$ smoothed-out equation follow the proof of the a priori tangential bounds in Section \ref{sec.tang.r}, which is just a change of notation. Let
\begin{align*}
E_I^\eps=&\langle \dot{W}_{\eps I}, \dot{W}_{\eps I}\rangle +\langle W_{\eps I},(\A +I)W_{\eps I}\rangle+\langle \Lie_B W_{\eps I},\Lie_B W_{\eps I}\rangle, \quad W_{\eps I}=\Lie_T^I W_\eps.
\end{align*}
If $\eps<d_0$, then the commutator relation for $\A^\eps$, \eqref{comm.AL}, is exactly the same as for $\A$, \eqref{LTA'}. Moreover, the positivity property for $\A_f^\eps$ only differs from the one for $\A_f$ in which the supremum over the boundary in \eqref{est.A} is replaced by the supremum over a neighborhood of the boundary where $d(y)<\eps$ in \eqref{est.Aeps}. Thus, all the calculations and inequalities in Sections \ref{sec.tang.r}, \ref{sec.curl} and \ref{sec.curl.r} hold with $\A$ replaced by $\A^\eps$ if we replace the supremum of $\nb_Nq/\nb_NP$ over the boundary in \eqref{Dq} by the supremum of $q/P$ over the domain $\Omega\setminus\Omega^\eps$, where $\Omega^\eps$ is given by \eqref{Omegaeps}. Hence, we will reach the energy bound  \eqref{ErT} for $E_r^{\mc{T}}$ replaced by
\begin{align}\label{ErTeps}
E_r^{\mc{T},\eps}=\sum_{|I|\ls r, I\in\mc{T}} \sqrt{E_I^\eps},
\end{align}
namely, Proposition \ref{prop.ErT} holds for  $E_r^{\mc{T}}$ replaced by $E_r^{\mc{T},\eps}$ with a constant independent of $\eps$. It is where we need to have vanishing initial data and an inhomogeneous term that vanishes to higher order when $t=0$ so that the higher-order time derivatives of the solution of \eqref{eps.IVP} also vanish when $t=0$. If the initial data for higher-order time derivatives were obtained from the $\eps$ smoothed-out equation, then they would depend on $\eps$, and so we would not have been able to get a uniform bound for the energy $E_r^{\mc{T},\eps}$.

The estimate for the curl is simple since the curl of $\A_\eps$ vanishes in $\Omega^\eps$ by \eqref{curlAeps}, it follows that all the formula in Sections \ref{sec.curl} and \ref{sec.curl.r} hold when $d(y)\gs \eps$. This follows from replacing $\A$ in \eqref{Dtw} by $\A^\eps$ and vanishing of its curl  for $d(y)\gs \eps$. Let
\begin{align}
C_r^{\mc{U},\eps}=&\sum_{|J|\ls r-1,\, J\in\mc{U}}\left(\int_{\Omega^\eps}(|\curl \Lie_U^J w_\eps|^2+|\curl \Lie_U^J w_\eps|^2)dy\right)^{1/2},\\
\norm{W(t)}_{\mc{U}^r(\Omega^\eps)}=&\sum_{|I|\ls r, I\in\mc{U}}\left(\int_{\Omega^\eps}|\Lie_U^I W(t,y)|^2 dy\right)^{1/2}.\label{Omegaepsnorm}
\end{align}
Because all the used estimates from Section \ref{sec.tang} are pointwise estimates, we conclude that the inequality in Proposition \ref{prop.normal} holds with a constant $C$ independent of $\eps$ if we replace $C_s^{\mc{U}}$ by $C_s^{\mc{U},\eps}$ and the norms by \eqref{Omegaepsnorm}, as follows. 
\begin{proposition}\label{prop.normal.eps}
	Suppose that $x,P\in C^{r+2}([0,T]\times\Omega)$, $B\in C^{r+2}(\Omega)$, $P|_{\Gamma}=0$, $\nb_NP|_\Gamma\ls -c_0<0$, $B^aN_a|_\Gamma=0$ and $\dv V=0$, where $V=D_t x$. Suppose that $W_\eps$ is a solution of \eqref{eps.IVP} where $F$ is divergence-free and vanishing to order $r$ as $t\to 0$. Let $E_s^{\mc{T},\eps}$ be defined by \eqref{ErTeps}. Then, there is a constant $C=C(x,P,B)$ depending only on the norm of $(x,P,B)$, a lower bound for $c_0$, and an upper bound for $T$, but independent of $\eps$ such that if $E_s^{\mc{T},\eps}(0)=C_s^{\mc{U},\eps}(0)=0$ for $s\ls r$, then for $t\in [0,T]$
	\begin{align*}
	\norm{W_\eps}_{\mc{U}^r(\Omega^\eps)}+\norm{\dot{W}_\eps}_{\mc{U}^r(\Omega^\eps)}+\norm{\Lie_BW_\eps}_{\mc{U}^r(\Omega^\eps)}+E_r^{\mc{T},\eps}\ls C\int_0^t\norm{F}_r^{\mc{U}}d\tau.
	\end{align*}
\end{proposition}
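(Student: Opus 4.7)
The plan is to mirror, step by step, the chain of a priori estimates already established for the original linearized equation in Sections \ref{sec.tang.r}, \ref{sec.curl}, and \ref{sec.curl.r}, replacing the normal operator $\A$ with its smoothed-out analogue $\A^\eps$ throughout, and keeping careful track of where the approximation introduces discrepancies. Since the statement is an essentially verbatim restatement of Proposition \ref{prop.normal} with $\A \rightsquigarrow \A^\eps$ and norms restricted to $\Omega^\eps$, the content of the proof is really the claim that every structural property of $\A$ used in Sections \ref{sec.tang.r}--\ref{sec.curl.r} has an analogue for $\A^\eps$ with constants independent of $\eps$.

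First, I would handle the tangential estimates. Applying $\Lie_T^I$ with $I\in\mc{T}$, $|I|\ls r$, to the $\eps$ smoothed-out equation \eqref{eps.IVP}, the key observation is that the commutator identity \eqref{comm.AL} for $\A^\eps$ has exactly the same form as \eqref{LTA'} for $\A$, provided $\eps\ls d_0/2$ so that the vector fields in $\mc{S}_1$ still satisfy $\Lie_S\rho=0$ on the support of $\chi_\eps'(\rho)$. Thus the derivation of \eqref{Lieeq2} transfers verbatim with $\A_{I_1}$ replaced by $\A^\eps_{I_1}$. I would then construct $E_I^\eps$ and the modifier $D_I^\eps = 2\tilde c_I^{I_1I_2}\langle W_{\eps,I}, \A^\eps_{I_1}W_{\eps,I_2}\rangle$ and reproduce the bookkeeping leading to \eqref{EDI.1}--\eqref{EI}. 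The only place where the boundary geometry of $\A$ enters is the Cauchy--Schwarz step \eqref{est.A}; its replacement \eqref{est.Aeps} has the same form but with $L^\infty(\Gamma)$ upgraded to $L^\infty(\Omega\setminus\Omega^\eps)$, and \eqref{dotAeps} replaces \eqref{dotA} without loss. Following the recursion of Proposition \ref{prop.Er.recur} one arrives at an $\eps$-independent version of Proposition \ref{prop.ErT}: the tangential energy $E_r^{\mc{T},\eps}$ defined in \eqref{ErTeps} satisfies $E_r^{\mc{T},\eps}(t)\ls C\int_0^t\norm{F}_r^{\mc{U}}\,d\tau$ for $t\in[0,T]$.

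Second, I would handle the curl and full-derivative estimates, this time restricted to $\Omega^\eps$. The identity \eqref{curlAeps} says that the curl of $\underline{\A^\eps W}$ vanishes whenever $d(y)\gs \eps$, because the cutoff $\chi_\eps'(\rho)$ is supported in $\{d(y)<\eps\}$. Therefore the derivation of \eqref{DtcurlDtw} and its higher-order version \eqref{DtcurlDtLUJw} proceeds identically on $\Omega^\eps$: the $\A^\eps$ contribution drops out and $\curl \Lie_U^J\dot{w}_\eps$ satisfies a transport-type equation with the same nonlinear right-hand side as in the non-smoothed setting. Pairing against $\curl\Lie_U^J\dot{w}_\eps$ and integrating over $\Omega^\eps$ works because every inequality in Section \ref{sec.curl.r} is pointwise and hence localizes to $\Omega^\eps$. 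This produces the analogue of \eqref{CrU} with $C_s^{\mc{U}}$ replaced by $C_s^{\mc{U},\eps}$, and Gronwall yields a uniform bound for $C_r^{\mc{U},\eps}$ in terms of $E_s^{\mc{T},\eps}$ and $F$, independent of $\eps$. Lemma \ref{lem.divcurl} is pointwise in $y$, so it restricts to $\Omega^\eps$ and yields an $\Omega^\eps$-version of \eqref{L12.20}, namely $\norm{W_\eps}_{\mc{U}^r(\Omega^\eps)} + \norm{\dot{W}_\eps}_{\mc{U}^r(\Omega^\eps)} + \norm{\Lie_BW_\eps}_{\mc{U}^r(\Omega^\eps)} \ls K_1\sum_{s=0}^r m_{r-s}^{\mc{U}}(C_s^{\mc{U},\eps}+E_s^{\mc{T},\eps})$. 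Combining this with the tangential bound and the curl bound delivers the claim.

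The main obstacle, and where care is needed, is the first step: verifying that the positivity-based energy identity survives replacement of $\A$ by $\A^\eps$ with a constant independent of $\eps$. Concretely, for each of the quantities $P_{I_1}/P$, $\dot{P}_{I_1}/P$, etc.\ that appear in the recursion, one needs their $L^\infty$ norms over the thin shell $\Omega\setminus\Omega^\eps$ to remain bounded as $\eps\to 0$. This uses that $P$ vanishes simply on $\Gamma$ by the Taylor-type condition $\nb_NP|_\Gamma\ls -c_0<0$, so that all the relevant ratios extend continuously to $\overline{\Omega}$ and are uniformly bounded on a neighborhood of $\Gamma$. A second, more technical, point is ensuring that the crucial property $[\Lie_S,\A^\eps_f] = \A^\eps_{Sf}$ for $S\in\mc{S}_1$ is not broken by the cutoff; this is exactly why we must impose $\eps<d_0/2$ so that $\mc{S}_1$ is supported where $\chi_\eps'(\rho)=0$. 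Once these observations are in place, every ingredient of the proofs of Propositions \ref{prop.ErT}, \ref{prop.Er.recur}, and \ref{prop.normal} goes through unchanged with $\A\rightsquigarrow\A^\eps$, and the uniform estimate of Proposition \ref{prop.normal.eps} follows.
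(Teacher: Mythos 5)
Your proposal matches the paper's own argument step for step: the commutator identity \eqref{comm.AL} plays the role of \eqref{LTA'}, the boundary $L^\infty(\Gamma)$ suprema in \eqref{est.A}--\eqref{dotA} are replaced by shell suprema \eqref{est.Aeps}--\eqref{dotAeps} (bounded uniformly in $\eps$ because $\nb_NP|_\Gamma\ls -c_0<0$ makes the relevant ratios smooth up to $\Gamma$), the curl of $\underline{\A^\eps W}$ vanishes on $\Omega^\eps$ by \eqref{curlAeps}, and the pointwise nature of Lemmas \ref{lem.11.3}--\ref{lem.divcurl} allows localization to $\Omega^\eps$. This is precisely the paper's proof, so nothing further is needed.
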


Therefore, it implies that the limit $W$ satisfies the same bound with $\Omega^\eps$ replaced by $\Omega$, and so the weak solution in Section \ref{sec.weaksol} is indeed a smooth solution.

\section{The energy estimate with inhomogeneous initial data} \label{sec.inhomo}

In this section, we consider the original equations with inhomogeneous initial data and an inhomogeneous term:
\begin{align}\label{W}
\ddot{W}^a-\Lie_B^2W^a+\mc{A}W^a+\dot{\mc{G}}\dot{W}^a-\mc{C}\dot{W}^a+\mc{X}\Lie_B W^a=F^a.
\end{align}

We need some estimates of the commutators with the operator $\mc{A}$, $\dot{\mc{G}}$, $\mc{C}$ and tangential vector fields. We recall them from \cite{L1},
\begin{align*}
[\Lie_S,\mc{A}]W=&(\mc{A}_S-\mc{G}_S\mc{A})W,\quad 
&[\Lie_T,\mc{G}_S]W= &(\mc{G}_{TS}-\mc{G}_T\mc{G}_S)W,\\
[\Lie_S,\mc{C}]W=&(\mc{C}_S-\mc{G}_S\mc{C})W,\quad
&[\Lie_S^I,\mc{A}]W= &\tilde{d}_I^{I_1I_k}\mc{G}_{I_1}\cdots \mc{G}_{I_{k-2}}\mc{A}_{I_{k-1}}W_{I_k},\\
[\Lie_S^I,\dot{\mc{G}}]W=&\tilde{e}_I^{I_1I_k}\mc{G}_{I_1}\cdots \mc{G}_{I_{k-2}}\dot{\mc{G}}_{I_{k-1}}W_{I_k},\quad
&[\Lie_S^I,\mc{C}]W= &\tilde{e}_I^{I_1I_k}\mc{G}_{I_1}\cdots \mc{G}_{I_{k-2}}\mc{C}_{I_{k-1}}W_{I_k},
\end{align*} 
where $\mc{A}_S=\mc{A}_{SP}$, $\mc{G}_S=\mc{M}_{g^S}$ defined by $\mc{G}_SW^a=\Pdv(g^{ac}g_{cb}^SW^b)$, $g_{bc}^{TS}=\Lie_T\Lie_S g_{bc}$, $\mc{G}_{TS}W^a=\Pdv(g^{ab}g_{bc}^{TS}W^c)$,  $\mc{C}_TW^a=\Pdv(g^{ab}\omega_{bc}^TW^c)$, $\omega_{bc}^T=\Lie_T\omega_{bc}$, $\mc{G}_JW^a=\mc{M}_{g^J}W^a=\Pdv(g^{ac}g_{cb}^JW^b)$, $g_{ac}^J=\Lie_S^Jg_{ac}$, $\mc{A}_J=\mc{A}_{S^JP}$, and $W_J=\Lie_S^JW$, the sum is over all combinations with $I_1+I_2+\cdots+I_k=I$ in last three identities, with $k\gs 2$ and $|I_k|<|I|$, $\tilde{d}_I^{I_1I_k}$ and $\tilde{e}_I^{I_1I_k}$ are some constants. For the operator $\mc{X}$, we have similar equality for its commutator with tangential vector fields. Denote $\beta_{bc}=\delta_{il}\D_bx^l\Lie_B\D_cx^i$, we have
\begin{align*}
\Lie_T\mc{X}W^a=&\Lie_T(g^{ab}(-2\beta_{bc}W^c+\D_b q))\\
=&(\Lie_Tg^{ab})(-2\beta_{bc}W^c+\D_b q)-2g^{ab}(\Lie_T\beta_{bc})W^c-2g^{ab}\beta_{bc}\Lie_TW^c+g^{ab}\D_b Tq,
\end{align*}
where
\begin{align*}
\Delta q=&2\D_a(g^{ab}\beta_{bc}W^c), \quad q|_\Gamma=0.
\end{align*}
Projecting each term onto the divergence-free vector fields, we get
\begin{align*}
[\Lie_T,\mc{X}]W=(\mc{X}_T-\mc{G}_T\mc{X})W,
\end{align*}
where $\mc{X}_TW^a=\Pdv(-2g^{ab}\Lie_T(\delta_{il}\D_bx^l\Lie_B\D_cx^i)W^c)$.
Similarly, we have
\begin{align*}
[\Lie_S^I,\mc{X}]W=&\tilde{e}_I^{I_1I_k}\mc{G}_{I_1}\cdots \mc{G}_{I_{k-2}}\mc{X}_{I_{k-1}}W_{I_k}.
\end{align*}
These commutators are bounded operator and lower order since $|I_k|<|I|$. In addition, $[\Lie_T^I,\Lie_B^2]$ is also a bounded operator since $B$ is a tangential vector field.
Thus, we obtain
\begin{align*}
L_IW=\ddot{W}_I-\Lie_B^2W_I+\mc{A}W_I+\dot{\mc{G}}\dot{W}_I-\mc{C}\dot{W}_I+\mc{X}\Lie_B W_I=H_I,
\end{align*}
with
\begin{align}
H_I=&F_I+\tilde{d}_I^{I_1I_k}\mc{G}_{I_1}\cdots \mc{G}_{I_{k-2}}\mc{A}_{I_{k-1}}W_{I_k}\label{HI.1}\\
&+[\Lie_T^I,\Lie_B^2]W+\tilde{e}_I^{I_1I_k}\mc{G}_{I_1}\cdots \mc{G}_{I_{k-2}}\mc{X}_{I_{k-1}}[\Lie_T^{I_k},\Lie_B]W\label{HI.2}\\
&+\tilde{e}_I^{I_1I_k}\mc{G}_{I_1}\cdots \mc{G}_{I_{k-2}}\dot{\mc{G}}_{I_{k-1}}\dot{W}_{I_k}+\tilde{e}_I^{I_1I_k}\mc{G}_{I_1}\cdots \mc{G}_{I_{k-2}}\mc{C}_{I_{k-1}}\dot{W}_{I_k},\label{HI.3}
\end{align}
where $|I_k|<|I|$ and $F_I=\Lie_T^I F$.
We consider only $W_I=\Lie_S^I W$ with $S\in\mc{S}$, as before, let
\begin{align*}
E_I=&\langle \dot{W}_I, \dot{W}_I\rangle +\langle W_I,(\A +I)W_I\rangle+\langle \Lie_B W_I,\Lie_B W_I\rangle.
\end{align*}
The energy estimate is as similar as before, and we only need to estimate the $L^2$-norm of the $H_I$. It is obvious that \eqref{HI.2} and \eqref{HI.3} are bounded by $E_J$ for some $|J|\ls |I|$. Since $A_{I_k}$ is of order $1$, it contains derivatives in any direction, thus the term has to be estimated by $\norm{\D W_{I_k}}_{L^2(\Omega)}$, and then it does not directly get an estimate for $\norm{\Lie_S W_{I_k}}_{L^2(\Omega)}$ for all tangential derivatives $S$. However, we can combine the estimates for the curl to get the desired estimate.

Let $C_r^{\mc{R}}$ and $E_r^{\mc{S}}$ be defined as in \eqref{Cr} and \eqref{EsV}, respectively. Let $m_r^{\mc{V}}$,  $\dot{m}_r^{\mc{V}}$ and $\bar{m}_r^{\mc{V}}$ be defined as in \eqref{mr}. Then, we have by \eqref{L12.20}
\begin{align}\label{WR}
\norm{W}_r+\norm{\dot{W}}_r+\norm{\Lie_BW}_r\ls K_1\sum_{s=0}^r m_{r-s}^{\mc{R}}(C_s^{\mc{R}}+E_s^{\mc{S}}),
\end{align}
where $\norm{W}_r=\norm{W(t)}_{\mc{R}^r(\Omega)}$. Since the projection $\Pdv$ has norm $1$, and $\norm{G_JW}\ls \norm{g^J}_{L^\infty(\Omega)}\norm{W}$, it follows that
\begin{align}
\norm{\mc{G}_{I_1}\cdots \mc{G}_{I_{k-2}}\dot{\mc{G}}_{I_{k-1}}\dot{W}_{I_k}}\ls & \norm{g^{I_1}}_{L^\infty(\Omega)}\cdots\norm{g^{I_{k-2}}}_{L^\infty(\Omega)}\norm{\dot{g}^{I_{k-1}}}_{L^\infty(\Omega)}\norm{\dot{W}}_s\label{CW}\\
\ls& \dot{m}_{r-s}^{\mc{R}}\norm{\dot{W}}_s,\no\\
\norm{\mc{G}_{I_1}\cdots \mc{G}_{I_{k-2}}\dot{\mc{C}}_{I_{k-1}}\dot{W}_{I_k}}\ls & \norm{g^{I_1}}_{L^\infty(\Omega)}\cdots\norm{g^{I_{k-2}}}_{L^\infty(\Omega)}\norm{\omega^{I_{k-1}}}_{L^\infty(\Omega)}\norm{\dot{W}}_s\\
\ls& \dot{m}_{r-s}^{\mc{R}}\norm{\dot{W}}_s,\no
\end{align}
and
\begin{align*}
&\norm{\mc{G}_{I_1}\cdots \mc{G}_{I_{k-2}}\mc{X}_{I_{k-1}}[\Lie_T^{I_k},\Lie_B]W}\no\\
\ls& \norm{g^{I_1}}_{L^\infty(\Omega)}\cdots\norm{g^{I_{k-2}}}_{L^\infty(\Omega)}\norm{\Lie_T^{I_{k-1}}(\delta_{il}\D_bx^l\Lie_B\D_cx^i)}_{L^\infty(\Omega)}\norm{[\Lie_T^{I_k},\Lie_B]W}\no\\
\ls &(\dot{m}_{r-s}^{\mc{R}}+\bar{m}_{r-s}^{\mc{R}})\norm{W}_{s,B},
\end{align*}
where $s=|I_k|<r=|I|$. Denote
\begin{align*}
P_r^{\mc{R}}=\sum_{s=0}^r[[g]]_{r-s,\infty}^{\mc{R}}\sum_{|J|\ls s+1,J\in\mc{S}}\norm{\D S^JP}_{L^\infty(\D\Omega)}.
\end{align*}
Then, we have
\begin{align}\label{AW}
\norm{\tilde{d}_I^{I_1I_k}\mc{G}_{I_1}\cdots \mc{G}_{I_{k-2}}\mc{A}_{I_{k-1}}W_{I_k}}\ls & \norm{g^{I_1}}_{L^\infty(\Omega)}\cdots\norm{g^{I_{k-2}}}_{L^\infty(\Omega)}\norm{\mc{A}^{I_{k-1}}W_{I_k}}_{L^\infty(\Omega)}
\\
\ls &P_{r-s}^{\mc{R}}\norm{W}_s+P_{r-s-1}^{\mc{R}}\norm{W}_{s+1}.\no
\end{align}
Similar to \eqref{dotE0}, we can get
\begin{align}\label{EIS}
|\dot{E}_I|\ls& \left(1+2\norm{\dot{g}}_{L^\infty(\Omega)}+\frac{\norm{\D \dot{P}}_{L^\infty(\Gamma)}}{c_0}+2\norm{\D x}_{L^\infty(\Omega)}\norm{\Lie_B\D x}_{L^\infty(\Omega)}\right)E_I\\
&+2\sqrt{E_I}\norm{H_I},\no
\end{align}
where $c_0$ is the constant in the condition \eqref{nbNPcond}. From \eqref{CW}-\eqref{AW} and \eqref{WR}, we get
\begin{align}\label{HIest}
\norm{H_I}\ls& C\sum_{s=0}^{r-1}(\dot{m}_{r-s}^{\mc{R}}\norm{\dot{W}}_s+(\dot{m}_{r-s}^{\mc{R}}+\bar{m}_{r-s}^{\mc{R}})\norm{W}_{s,B}+P_{r-s}^{\mc{R}}\norm{W}_s)\\
&+P_0^{\mc{R}}\norm{W}_r+\norm{F}_r\no\\
\ls &K_1\sum_{s=0}^{r-1}(\dot{m}_{r-s}^{\mc{R}}+\bar{m}_{r-s}^{\mc{R}}+P_{r-s}^{\mc{R}})(C_s^{\mc{R}}+E_s^{\mc{R}})\no\\
&+K_1P_0^{\mc{R}}(C_r^{\mc{R}}+E_r^{\mc{R}})+\norm{F}_r.\no
\end{align}

Summing \eqref{EIS} over all $I\in\mc{S}$ with $|I|=r$ and using \eqref{HIest}, we have
\begin{align}\label{ErS}
\abs{\frac{dE_r^{\mc{S}}}{dt}}\ls& K_1\bigg(1+2\norm{\dot{g}}_{L^\infty(\Omega)}+\frac{\norm{\D \dot{P}}_{L^\infty(\Gamma)}}{c_0}+2\norm{\D x}_{L^\infty(\Omega)}\norm{\Lie_B\D x}_{L^\infty(\Omega)}\\
&\qquad\quad+\sum_{S\in\mc{S}}\norm{\D SP}_{L^\infty(\Gamma)} \bigg)(C_r^{\mc{R}}+E_r^{\mc{R}})\no\\
&+K_1\sum_{s=0}^{r-1}(\dot{m}_{r-s}^{\mc{R}}+\bar{m}_{r-s}^{\mc{R}}+P_{r-s}^{\mc{R}})(C_s^{\mc{R}}+E_s^{\mc{R}})+\norm{F}_r.\no
\end{align}
Since \eqref{CrU} holds with $\mc{U}$ and $\mc{T}$ replaced by $\mc{R}$ and $\mc{S}$, respectively, we get
\begin{align}\label{CrR}
\abs{\frac{dC_r^{\mc{R}}}{dt}}\ls & K_1 (\dot{m}_0^{\mc{R}}+\bar{m}_0^{\mc{R}})(C_r^{\mc{R}}+E_r^{\mc{S}})\\
&+K_1 \sum_{s=0}^{r-1} (\dot{m}_{r-s}^{\mc{R}}+\bar{m}_{r-s}^{\mc{R}})(C_s^{\mc{R}}+E_s^{\mc{S}})+\norm{F}_r.\no
\end{align}
Thus, \eqref{ErS} and \eqref{CrR} yield a bound for $C_r^{\mc{R}}+E_r^{\mc{S}}$ in terms of $C_s^{\mc{R}}+E_s^{\mc{S}}$ for $s<r$, namely
\begin{align*}
C_r^{\mc{R}}(t)+E_r^{\mc{S}}(t)\ls &K_1e^{K_1\int_0^t n d\tau}\bigg(C_r^{\mc{R}}(0)+E_r^{\mc{S}}(0)\no\\
&+\int_0^t \Big(\sum_{s=0}^{r-1} (\dot{m}_{r-s}^{\mc{R}}+\bar{m}_{r-s}^{\mc{R}})(C_s^{\mc{R}}+E_s^{\mc{S}})+\norm{F}_r\Big)d\tau\bigg),
\end{align*}
where 
\begin{align*}
n=&1+2\norm{\dot{g}}_{L^\infty(\Omega)}+\norm{\D \dot{P}}_{L^\infty(\Gamma)}/c_0+2\norm{\D x}_{L^\infty(\Omega)}\norm{\Lie_B\D x}_{L^\infty(\Omega)}\\
&+\sum_{S\in\mc{S}}\norm{\D SP}_{L^\infty(\Gamma)}+\norm{\omega}_{L^\infty(\Omega)}+\norm{\Lie_Bg}_{L^\infty(\Omega)}+\norm{\Lie_B^2g}_{L^\infty(\Omega)}+\norm{B}_{2,\infty}^{\mc{R}}\\
&+\norm{\D x}_{L^\infty(\Omega)}+\norm{\D^2 x}_{L^\infty(\Omega)}+\norm{\frac{\D y}{\D x}}_{L^\infty(\Omega)}.
\end{align*}
Because the bound for $E_0^{\mc{S}}=E_0$ have been already proven, we can get the bound for $C_r^{\mc{R}}+E_r^{\mc{S}}$ inductively.
Therefore, from \eqref{WR}, we obtain the following estimates.

\begin{proposition}
	Suppose that $x,P\in C^{r+2}([0,T]\times\Omega)$, $B\in C^{r+2}(\Omega)$, $P|_{\Gamma}=0$, $\nb_NP|_\Gamma\ls -c_0<0$, $B^aN_a|_\Gamma=0$ and $\dv V=0$, where $V=D_t x$. Let $W$ be the solution of \eqref{W} where $F$ is divergence-free. Then, there is a constant $C$ depending only on the norm of $(x,P,B)$, a lower bound for the constant $c_0$, and an upper bound for $T$, such that, for $s\ls r$, we have
	\begin{align}\label{WW}
	&\norm{W(t)}_r+\norm{\dot{W}(t)}_r+\norm{\Lie_BW(t)}_r+\langle W(t)\rangle_{\mc{A},r}\\
	\ls & C\bigg(\norm{W(0)}_r+\norm{\dot{W}(0)}_r+\norm{\Lie_BW(0)}_r+\langle W(0)\rangle_{\mc{A},r}+\int_0^t\norm{F}_rd\tau\bigg),\no
	\end{align}
	where
	\begin{align*}
	\norm{W(t)}_r=&\sum_{|I|\ls r, I\in\mc{R}} \norm{\Lie_U^IW(t)}_{L^2(\Omega)},\\
	\langle W(t)\rangle_{\mc{A},r}=&\sum_{|I|\ls r, I\in\mc{S}}\langle \Lie_S^IW(t),\mc{A}\Lie_S^IW(t) \rangle^{1/2}.
	\end{align*}
\end{proposition}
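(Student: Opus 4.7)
The plan is to combine the a priori estimates derived earlier for the case of homogeneous data (Proposition \ref{prop.normal}) with the commutator structure that was set up just before the proposition, using the positivity of $\mc{A}$ to absorb the highest-order boundary contribution and using the div--curl decomposition (Lemma \ref{lem.divcurl}) together with the curl bound \eqref{CrR} to close the argument. Since the inhomogeneous term $F$ is divergence-free, I do not need to truncate initial data this time; instead, I estimate $W_I=\Lie_S^IW$ for $I\in\mc{S}$ directly and couple the resulting tangential energies to the curl energies $C_r^{\mc{R}}$.

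First I would commute $\Lie_S^I$ through equation \eqref{W}, which by the listed commutator identities gives $L_IW=H_I$ with $H_I$ consisting of: lower-order commutator terms of the form $\mc{G}_{I_1}\cdots\mc{G}_{I_{k-2}}\mc{A}_{I_{k-1}}W_{I_k}$, together with analogous commutators involving $\dot{\mc{G}}$, $\mc{C}$, $\mc{X}$, and $[\Lie_T^I,\Lie_B^2]$, plus the source $F_I$. For each $|I|=r$ with $I\in\mc{S}$, I would test $L_IW=H_I$ against $\dot{W}_I$ exactly as in the lowest-order case of Proposition \ref{prop.E0}; using the symmetry and positivity of $\mc{A}$ and the antisymmetry of $\omega$, this yields
\begin{equation*}
\abs{\dot E_I} \ls n(t)\,E_I+2\sqrt{E_I}\,\norm{H_I},
\end{equation*}
with $n(t)$ the coefficient in \eqref{EIS}. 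Summing over $|I|\ls r$ defines $E_r^{\mc{S}}$ and $\langle W\rangle_{\mc{A},r}$; these together control all tangential derivatives of $W$, $\dot W$, and $\Lie_BW$ and the $\mc{A}$-norm of $W$.

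The main obstacle is that $\mc{A}_{I_{k-1}}$ is a first-order operator, so $\norm{\mc{A}_{I_{k-1}}W_{I_k}}$ cannot be bounded directly by $\sqrt{E_{I_k}}$: one needs one full derivative of $W_{I_k}$. Here I would use Lemma \ref{lem.divcurl} in the form $\norm{W}_r\ls K_1\sum_{s} m_{r-s}^{\mc{R}}(C_s^{\mc{R}}+E_s^{\mc{S}})$ (which is \eqref{WR}) to replace radial derivatives by tangential ones plus the curl. This trades the missing normal derivative of $W_{I_k}$ for the curl energies $C_s^{\mc{R}}$, and via the bounds \eqref{CW}--\eqref{AW} gives the estimate \eqref{HIest} for $\norm{H_I}$ in terms of $C_s^{\mc{R}}+E_s^{\mc{R}}$ for $s\ls r$, with the top-order piece carrying only the benign factor $P_0^{\mc{R}}\sim\norm{\D P}_{L^\infty(\Gamma)}/c_0$ times $(C_r^{\mc{R}}+E_r^{\mc{R}})$.

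The final step is to close the coupled system formed by the tangential energy bound \eqref{ErS} and the curl bound \eqref{CrR}: both give $\abs{d/dt}(C_r^{\mc{R}}+E_r^{\mc{S}})$ controlled by $K_1\,n(t)(C_r^{\mc{R}}+E_r^{\mc{S}})$ plus lower-order quantities $C_s^{\mc{R}}+E_s^{\mc{S}}$ for $s<r$ (which are already controlled inductively, the base case $E_0$ being Proposition \ref{prop.E0}) plus $\norm{F}_r$. Gronwall's inequality then yields
\begin{equation*}
C_r^{\mc{R}}(t)+E_r^{\mc{S}}(t)\ls K_1 e^{K_1\int_0^t n\,d\tau}\Bigl(C_r^{\mc{R}}(0)+E_r^{\mc{S}}(0)+\int_0^t\norm{F}_r\,d\tau\Bigr),
\end{equation*}
and reinserting this into \eqref{WR} gives the claimed bound for $\norm{W}_r+\norm{\dot W}_r+\norm{\Lie_B W}_r$. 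The term $\langle W(t)\rangle_{\mc{A},r}$ is built into the energy $E_r^{\mc{S}}$ from the start via the $\langle W_I,(\mc{A}+I)W_I\rangle$ contribution, so it is controlled at the same time. The initial-data terms on the right of \eqref{WW} appear precisely because, unlike in Proposition \ref{prop.normal}, we no longer subtract a power-series solution; $E_r^{\mc{S}}(0)$ and $C_r^{\mc{R}}(0)$ are then estimated by $\norm{W(0)}_r+\norm{\dot W(0)}_r+\norm{\Lie_B W(0)}_r+\langle W(0)\rangle_{\mc{A},r}$, completing the proof.
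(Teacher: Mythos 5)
Your proposal is correct and follows essentially the same approach as the paper: commute $\Lie_S^I$ ($I\in\mc S$) through the equation, estimate the resulting source $H_I$ via \eqref{CW}--\eqref{AW}, trade the full-derivative loss from $\mc A_{I_{k-1}}$ for curl energies $C_s^{\mc R}$ via \eqref{WR}, close the coupled system \eqref{ErS}+\eqref{CrR} by Gronwall and induction on $r$, and recover $\norm{W}_r+\norm{\dot W}_r+\norm{\Lie_B W}_r$ from \eqref{WR}. The only cosmetic difference is your opening remark that you will "combine ... with Proposition \ref{prop.normal}": the paper's Section~\ref{sec.inhomo} is in fact a self-contained rerun of the same machinery rather than an invocation of that proposition, but your subsequent steps do exactly that, so nothing is missing.
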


\section{The main result}\label{sec.mainresult}

As the same as in \cite{L1}, $\norm{W(t)}_r$ is equivalent to the usual time-independent Sobolev norm;  $\langle W(t)\rangle_{\mc{A},r}$ is only a seminorm on divergence-free vector fields, which is not only equivalent to a time-independent seminorm given by \eqref{Afdef} with $f$ the distance function $d(y)$ due to $0<c_0\ls -\nb_NP\ls C$, but also equivalent to the normal component of the vector field $W_N=N_aW^a$ being in $H^r(\Gamma)$ in view of \eqref{Afdef}, up to lower-order terms that can be controlled by $\norm{W(t)}_r$, since we only apply tangential vector fields.

We define $H^r(\Omega)$ to be the completion of $C^\infty(\Omega)$ in the norm $\norm{W(t)}_r$ and define $N^r(\Omega)$ to be the completion of the  $C^\infty(\Omega)$ divergence-free vector fields in the norm $\norm{W}_{N^r}=\norm{W(t)}_r+\langle W(t)\rangle_{\mc{A},r}$. Because the projection $\Pdv$ is continuous in the $H^r$ norm, it implies that $H^r$ is also the completion of the $C^\infty(\Omega)$ divergence-free  vector fields in the $H^r$ norm. We state the main result as follows.

\begin{theorem}
Suppose that $x,P\in C^{r+2}([0,T]\times\Omega)$, $B\in C^{r+2}(\Omega)$, $P|_{\Gamma}=0$, $\nb_NP|_\Gamma\ls -c_0<0$, $B^aN_a|_\Gamma=0$ and $\dv D_t x=0$. Then, if initial data and the inhomogeneous term in \eqref{data} are divergence-free and satisfy 
\begin{align*}
(W_0,W_1,\Lie_BW_0)\in N^r(\Omega)\times H^r(\Omega)\times H^r(\Omega), \quad F\in L^1\left([0,T],\, H^r(\Omega)\right),
\end{align*}
the linearized equations \eqref{IVP} have a solution
\begin{align}
(W, \dot{W}, \Lie_BW)\in C\left([0,T],\, N^r(\Omega)\times H^r(\Omega)\times H^r(\Omega)\right).\label{WWspace}
\end{align}
\end{theorem}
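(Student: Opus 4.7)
The plan is to establish existence by a density argument that combines the smooth-data existence construction developed in Sections \ref{sec.homodata}--\ref{sec.exist} with the a priori energy estimate \eqref{WW} derived in Section \ref{sec.inhomo}. First, I would approximate the data: choose sequences of smooth divergence-free vector fields $W_0^{(k)}, W_1^{(k)}$ and smooth divergence-free $F^{(k)}$ such that $W_0^{(k)} \to W_0$ in $N^r(\Omega)$, $W_1^{(k)} \to W_1$ in $H^r(\Omega)$, $\Lie_B W_0^{(k)} \to \Lie_B W_0$ in $H^r(\Omega)$, and $F^{(k)} \to F$ in $L^1([0,T], H^r(\Omega))$. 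Since $B\in C^{r+2}(\Omega)$ with $B\cdot N|_\Gamma=0$, the Lie derivative $\Lie_B$ preserves smoothness and divergence-freeness, and the density of smooth divergence-free fields in $N^r(\Omega)$ follows from the characterization of the seminorm $\langle\cdot\rangle_{\A,r}$ in terms of the normal trace $W_N\in H^r(\Gamma)$ (together with the interior $H^r$-norm) via \eqref{UAW} and the positivity assumption $\nb_N P\ls -c_0<0$.

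Second, for each $k$ I would produce a smooth solution $W^{(k)}$ of \eqref{W} with data $(W_0^{(k)}, W_1^{(k)}, F^{(k)})$, following the recipe of Section \ref{sec.homodata}: construct a smooth approximant $\tilde W^{(k)}$ (the cut-off infinite Taylor series built at the end of Section \ref{sec.homodata}) matching the initial data and satisfying $D_t^s(L_1\tilde W^{(k)}-F^{(k)})|_{t=0}=0$ for all $s$, then solve the residual problem with homogeneous data and inhomogeneous term $F^{(k)}-L_1\tilde W^{(k)}$ via the smoothed-out equation \eqref{eps.IVP} and the uniform-in-$\eps$ regularity estimates of Proposition \ref{prop.normal.eps}, passing $\eps\to 0$ to get a smooth divergence-free solution as in Section \ref{sec.exist}. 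Set $W^{(k)}=\tilde W^{(k)}+W^{(k)}_{\mathrm{hom}}$.

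Third, apply the energy inequality \eqref{WW} to the differences $W^{(k)}-W^{(l)}$, which again solve the linearized equation \eqref{W} with divergence-free data $(W_0^{(k)}-W_0^{(l)}, W_1^{(k)}-W_1^{(l)}, F^{(k)}-F^{(l)})$, to obtain
\begin{align*}
\sup_{t\in[0,T]}\Big(\norm{W^{(k)}{-}W^{(l)}}_r&+\norm{\dot W^{(k)}{-}\dot W^{(l)}}_r+\norm{\Lie_B(W^{(k)}{-}W^{(l)})}_r+\langle W^{(k)}{-}W^{(l)}\rangle_{\A,r}\Big)\\
&\ls C\Big(\text{differences of data and }\int_0^T\norm{F^{(k)}-F^{(l)}}_r d\tau\Big)\to 0.
\end{align*}
Thus $\{W^{(k)}\}$ is Cauchy in $C([0,T],N^r(\Omega))$, $\{\dot W^{(k)}\}$ in $C([0,T],H^r(\Omega))$, and $\{\Lie_B W^{(k)}\}$ in $C([0,T],H^r(\Omega))$, yielding a limit $(W,\dot W,\Lie_B W)$ in the space \eqref{WWspace}. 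Divergence-freeness of $W$ passes to the limit; the equation is satisfied in the strong $L^2$ sense because each term on the right-hand side of $\ddot W^{(k)}=F^{(k)}+\Lie_B^2 W^{(k)}-\A W^{(k)}-\dot{\mc{G}}\dot W^{(k)}+\mc{C}\dot W^{(k)}-\mc{X}\Lie_B W^{(k)}$ converges, and the initial conditions pass to the limit by the $C([0,T],\cdot)$ convergence.

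The main obstacle is the fourth step: showing that the approximation procedure for $W_0\in N^r(\Omega)$ by smooth divergence-free vector fields is both possible and compatible with the seminorm $\langle\cdot\rangle_{\A,r}$. Because $\langle W\rangle_{\A,r}^2$ reduces, up to terms bounded by $\norm{W}_r$, to $\sum_{|I|\ls r,\,S\in\mc{S}}\int_\Gamma |(\Lie_S^I W)_N|^2(-\nb_N P)\, dS$, density of smooth divergence-free fields requires a controlled boundary-trace extension that preserves $\dv W=0$ and approximates $W_N$ in $H^r(\Gamma)$; this can be achieved by extending the normal trace inward by a tubular-neighborhood construction, projecting onto divergence-free fields via $\Pdv$ (continuous on $H^r$ by \eqref{PdvHr}), and mollifying. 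Once this density is in hand, the energy estimate \eqref{WW} of Section \ref{sec.inhomo} closes the argument, since the constants in \eqref{WW} depend only on the fixed background $(x,P,B)$, the lower bound $c_0$, and $T$, not on the smooth approximant.
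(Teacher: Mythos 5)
Your proposal takes essentially the same approach as the paper's proof: approximate the data by smooth divergence-free fields, invoke the existence machinery of Sections \ref{sec.homodata}--\ref{sec.exist} for each smooth approximant, and apply the energy estimate \eqref{WW} to differences to get a Cauchy sequence in the target space. One remark: the density concern you single out as the ``main obstacle'' in your fourth step is actually moot, since the paper \emph{defines} $N^r(\Omega)$ (and $H^r(\Omega)$) as the completion of smooth divergence-free vector fields in the relevant norm, so approximability is built in; the only nontrivial point—which you do correctly flag—is that the approximating sequence must be chosen so that $\Lie_B W_0^{(k)}$ converges in $H^r$ simultaneously, i.e., the triple $(W_0^{(k)},W_1^{(k)},\Lie_B W_0^{(k)})$ is taken to converge in $N^r\times H^r\times H^r$.
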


\begin{proof}
    If $W_0$, $W_1$ and $F$ are divergence-free and $C^\infty$, and $F$ is supported in $t>0$, then there exists a solution by the arguments in Section \ref{sec.exist}. It follows, by approximating $W_0$, $W_1$ and $F$ with $C^\infty(\Omega)$ divergence-free  vector fields and applying the estimate \eqref{WW} to the differences, that we obtain a convergent sequence in \eqref{WWspace}, thus the limit must be in the same space.
\end{proof}

\appendix
\section{Lie derivatives}\label{App.Lie}

Let us review the Lie derivative of the vector field $W$
with respect to the vector field $T$ constructed in the previous section,
\begin{align}\label{A.Lie}
\Lie_T W^a=TW^a -(\D_c T^a) W^c.
\end{align}
For those vector fields, it holds $\dv T=0$, so $\dv W=0$ implies that
$$
\dv \Lie_{T} W=T\dv W-W\dv T=0. 
$$

The Lie derivative of a $1$-form is defined by
$$
\Lie_T \alpha_a =T\alpha_a+(\D_a T^c) \alpha_c.
$$
The Lie derivatives also commute with the exterior differentiation,
$[\Lie_T, d]=0$, so if $q$ is a function, then
\begin{align}\label{A.Lieq}
\Lie_T \D_a q=\D_a T q.
\end{align}

The Lie derivative of a $2$-form is given by 
\begin{align}\label{Lie.2form}
\Lie_T \beta_{ab} =T\beta_{ab}
+(\D_a T^c) \beta_{cb}+(\D_b T^c) \beta_{ac}.
\end{align}

In general, in local coordinate notation, for a type $(r, s)$ tensor field $\beta$, the Lie derivative along $T$ is given by
\begin{align}\label{def.Lie}
\Lie_{T}\beta^{a_{1}\ldots a_{r}}{}_{b_{1}\ldots b_{s}}=&T\beta^{a_{1}\ldots a_{r}}{}_{b_{1}\ldots b_{s}}\no\\
&-(\partial _{c}T^{a_{1}})\beta^{ca_{2}\ldots a_{r}}{}_{b_{1}\ldots b_{s}}-\ldots -(\partial _{c}T^{a_{r}})\beta^{a_{1}\ldots a_{r-1}c}{}_{b_{1}\ldots b_{s}}\no\\
&+(\partial _{b_{1}}T^{c})\beta^{a_{1}\ldots a_{r}}{}_{cb_{2}\ldots b_{s}}+\ldots +(\partial _{b_{s}}T^{c})\beta^{a_{1}\ldots a_{r}}{}_{b_{1}\ldots b_{s-1}c}.
\end{align}
It follows that the Lie derivative satisfies the Leibnitz rule, e.g. 
\begin{align}\label{A.Lie.Leib}
\begin{aligned}
\Lie_T (\alpha_{c} W^c)=&
(\Lie_T \alpha_{c}) W^c+\alpha_{c} \Lie_T W^c,\\ 
\Lie_T (\beta_{ac} W^c)=&
(\Lie_T\beta_{ac}) W^c+\beta_{ac} \Lie_T W^c,\\
\Lie_T(g^{ab}\alpha_b)=&\Lie_Tg^{ab}\alpha_b+g^{ab}\Lie_T\alpha_b,
\end{aligned}
\end{align}
and
\begin{align}
\Lie_T (W^c\D_c\beta_{ab} )
=&\Lie_T W^c\D_c\beta_{ab}+W^c\Lie_T\D_c\beta_{ab}.
\end{align}

If $w$ is a $1$-form and 
$\curl w_{ab}= dw_{ab}=\D_a w_b-\D_b w_a$, 
then  
\begin{align}\label{Lie.curl}
\Lie_T \curl w_{ab}=\curl \Lie_T w_{ab},
\end{align}
since the Lie derivative commutes with exterior differentiation.

From \eqref{A.Lie}, we have the following relation on the commutator of two Lie derivatives
\begin{align}\label{Lie.commut}
[\Lie_T,\Lie_B]W^a=\Lie_{[T,B]}W^a.
\end{align}
From \eqref{def.Lie}, we get the commutator of Lie derivative and $\D_a$
\begin{align}\label{LieDy.commu}
[\Lie_T, \D_a]W^b=W^d\D_d\D_aT^b.
\end{align}
Furthermore, we also treat $D_t$ as if it were a Lie derivative and we set
\begin{align}\label{A.LieDt}
\Lie_{D_t}=D_t.
\end{align}
Of course, this is not a space Lie derivative but rather could be interpreted 
as a space-time Lie derivative in the domain $[0,T]\times \Omega$. 
It is important that it satisfies all the properties of the 
other Lie derivatives considered, such as $\dv W=0$
implies that $\dv D_t W=0$ and $D_t \curl w=\curl D_t w$,
because it commutes with partial differentiation with respect to the
$y$ coordinates. It is more 
efficient with the same notation, since 
we will apply products of Lie derivatives and \eqref{A.LieDt}.
Moreover,
\begin{align}\label{A.comm.DtLie}
[\Lie_{D_t},\Lie_T]=0,
\end{align}
because this quantity is $\Lie_{[D_t,T]}$ and $[D_t,T]=0$ for the 
vector fields we considered, or it follows from \eqref{A.Lie} and that 
$T^a=T^a(y)$ is independent of $t$. 

\bigskip

{\bf Acknowledgments.} 
Hao's research was supported by National Natural Science Foundation of China (Grant No. 11671384).  Luo's research was  supported by a grant from the Research Grants Council of  Hong Kong  (Project No. 11305818).


\vspace*{1cm}
\noindent\parbox[t]{0.5\textwidth}{
	 \textsc{Chengchun Hao}\\
HLM, Institute of Mathematics\\
 Academy of Mathematics and
 Systems Science\\
Chinese Academy of Sciences\\
Beijing 100190\\
P.R. CHINA\\[1mm]
\hspace*{3mm} and
 
School of Mathematical Sciences\\
University of Chinese Academy of Sciences\\
Beijing 100049\\
P.R. CHINA\\
E-mail: hcc@amss.ac.cn
}\quad
\parbox[t]{0.5\textwidth}{
\noindent \textsc{Tao Luo}\\
Department of Mathematics\\
City University of Hong Kong\\
Tat Chee Avenue, Kowloon\\
HONG KONG\\
E-mail: taoluo@cityu.edu.hk
}

\end{document}